\newtheorem{theorem}{Theorem}[section]
\newtheorem{corollary}[theorem]{Corollary}
\newtheorem{definition}[theorem]{Definition}
\newtheorem{lemma}[theorem]{Lemma}
\newtheorem{proposition}[theorem]{Proposition}
\newcommand{\Ce}{{\mathcal C}}
\numberwithin{equation}{section}
\begin{document}

\title{$C^{(n)}$-cardinals}
\author{Joan Bagaria}
\thanks{The author was supported by the Spanish 
Ministry of Education and Science under grant MTM2008-03389/MTM,
and by the Generalitat de Catalunya under grant  2009SGR-00187. Part of this work was done in November 2009 during the author's stay at the Mittag-Leffler Institut, whose support and hospitality is gratefully acknowledged.}
%\footnotetext{}
\date{\today }
\subjclass[2000]{03E55, 03C55}
\keywords{$C^{(n)}$-cardinals, Supercompact cardinals, Extendible cardinals, Vopenka's Principle, Reflection}

\begin{abstract}
For each natural number $n$, let $C^{(n)}$ be the closed and unbounded proper class of ordinals $\alpha$ such that $V_\alpha$ is a $\Sigma_n$ elementary substructure of $V$. We say that $\kappa$ is a  \emph{$C^{(n)}$-cardinal} if it is the critical point of an elementary embedding $j:V\to M$, $M$ transitive, with $j(\kappa)$ in $C^{(n)}$. By analyzing the notion of $C^{(n)}$-cardinal  at various levels of the usual hierarchy of large cardinal principles we show that, starting at the level of superstrong cardinals and up to the level of rank-into-rank embeddings, $C^{(n)}$-cardinals form a much finer hierarchy. The naturalness of the notion of $C^{(n)}$-cardinal is exemplified by showing that the existence of $C^{(n)}$-extendible cardinals is equivalent to simple reflection principles for  classes of structures, which generalize the notions of supercompact and extendible cardinals. Moreover, building on results of \cite{BCMR}, we  give new characterizations of Vope\v{n}ka's Principle in terms of $C^{(n)}$-extendible cardinals.
\end{abstract}
\maketitle

\section{Introduction}

%Let $OR$ be the class of ordinals, $LOR$ the class of limit ordinals, $CARD$ the class of cardinals, and $LCARD$ the class of limit cardinals. All these classes are closed and unbounded (club).

For each natural number $n$, let $C^{(n)}$ denote the \emph{club} (i.e., closed and unbounded\footnote{For all standard set-theoretic undefined notions, see \cite{J2}.}) proper class of ordinals $\alpha$ that are \emph{$\Sigma_n$-correct} in the universe $V$ of all sets, meaning that $V_{\alpha}$ is a $\Sigma_n$-elementary substructure of $V$, and written $V_{\alpha}\preceq_n V$. Observe that $\alpha$ is $\Sigma_n$-correct in $V$ if and only if it is \emph{$\Pi_n$-correct} in $V$, i.e., $V_\alpha$ is a $\Pi_n$-elementary substructure of $V$. Notice also that if $\alpha$ is $\Sigma_n$-correct and $\varphi$ is a $\Sigma_{n+1}$ sentence with parameters in $V_\alpha$ that holds in $V_\alpha$, then $\varphi$ holds in $V$. And if $\psi$ is a $\Pi_{n+1}$ sentence with parameters in $V_\alpha$ that holds in $V$, then it  holds in $V_\alpha$. These basic facts will be used throughout the paper without further comment.

The class  $C^{(0)}$ is the class of all ordinals. But  if  $V_{\alpha}\preceq_1 V$, then $\alpha$ is already an uncountable strong limit cardinal: clearly $\alpha$ is a limit cardinal greater than $\omega$, and if $\beta <\alpha$, then the sentence
$$\exists \gamma \exists f (\gamma \mbox{ an ordinal} \wedge f:\gamma \to V_{\beta} \mbox{ is onto})$$
is $\Sigma_1$ in the parameter $V_{\beta }$, and therefore it must hold in $V_{\alpha}$. Further, if $\alpha \in C^{(1)}$, then $V_{\alpha}=H_\alpha$. Thus, since $H_\alpha \preceq_1 V$ for every uncountable cardinal $\alpha$,  $C^{(1)}$ is precisely the class of all uncountable cardinals $\alpha$ such that $V_{\alpha}=H_\alpha$. It follows that $C^{(1)}$ is $\Pi_1$ definable, for $\alpha \in C^{(1)}$ if and only if  $\alpha$ is an uncountable  cardinal and
$$\forall M (M \mbox{ a transitive model of ZFC}^{\ast}\wedge \alpha \in M\to M\models V_{\alpha}=H_\alpha).$$
(Here $\mbox{ZFC}^{\ast}$ denotes a sufficiently large finite fragment of ZFC.)
The point is that if $\alpha \in C^{(1)}$ and $M$ is a transitive model of $\mbox{ZFC}^{\ast}$ that contains $\alpha$, then if in $M$ we could find some transitive $x\in V_{\alpha}\setminus H_\alpha$, we would have that $|x| \geq \alpha$. But this contradicts the fact that in $V$ the cardinality of $x$ is less than  $\alpha$, because $V_{\alpha}= H_\alpha$.

 More generally, since the truth predicate $\models_n$ for $\Sigma_n$ sentences (for $n\geq 1$) is $\Sigma_n$ definable  (see \cite{K}, Section 0.2),   and since the relation $x=V_y$ is $\Pi_1$, the class $C^{(n)}$, for $n\geq 1$, is $\Pi_{n}$ definable:  $\alpha \in C^{(n)}$  if and only if   
$$\alpha\in C^{(n-1)}\wedge \forall \varphi(x)\in \Sigma_{n} \forall a\in V_{\alpha}(\models_{n}\varphi (a) \to V_{\alpha} \models \varphi (a)).$$

Let us remark that $C^{(n)}$, for $n\geq 1$, cannot be $\Sigma_n$ definable. Otherwise, if $\alpha$ is the least ordinal in $C^{(n)}$, then the sentence  ``there is some ordinal in $C^{(n)}$" would be  $\Sigma_n$ and so it would  hold in $V_{\alpha}$, yielding an ordinal in $C^{(n)}$ smaller than $\alpha$, which is impossible. 

The classes $C^{(n)}$, $n \geq 1$,  form a \emph{basis} for definable club proper classes of ordinals, in the sense that  every $\Sigma_n$ club proper class of ordinals contains $C^{(n)}$. For suppose $\Ce$ is a club proper class of ordinals that is $\Sigma_n$ definable, some $n\geq 1$. If $\alpha \in C^{(n)}$, then for every $\beta <\alpha$, the sentence
$$\exists \gamma (\beta <\gamma \wedge \gamma \in \Ce)$$
is $\Sigma_n$ in the parameter $\beta$ and is true in $V$, hence also true in $V_\alpha$. This shows that $\Ce$ is unbounded below $\alpha$. Therefore, since $\Ce$ is closed,  $\alpha \in \Ce$.

By a similar argument one can show that every club proper class $C$ of ordinals that is $\mathbf{\Sigma_n}$ (i.e., $\Sigma_n$ definable with parameters)  contains all $\alpha \in C^{(n)}$ that are greater than the rank of the parameters involved in any given $\Sigma_n$ definition of $C$.

Finally, note that since the least ordinal in $C^{(n)}$ does not belong to $C^{(n+1)}$, $C^{(n+1)}\subset C^{(n)}$, all $n$.

\medskip

When working with non-trivial elementary embeddings $j:V\to M$, with $M$ transitive, one would like to have some control over where the image $j(\kappa)$ of the critical point $\kappa$ goes. An especially interesting case is when one wants $V_{j(\kappa)}$ to reflect some specific property of $V$ or, more generally, when one wants $j(\kappa)$ to belong to a particular definable club proper class of ordinals. Now, since the $C^{(n)}$, $n\in \omega$, form a basis for such classes, the problem can be reformulated as follows: when can one  
have $j(\kappa)\in C^{(n)}$, for a given $n\in \omega$? This prompts the following definition.

Let us say that a cardinal $\kappa$  is \emph{$C^{(n)}$-measurable} if there is an elementary embedding $j:V\to M$, some transitive class $M$,  with critical point $crit(j)=\kappa$ and with $j(\kappa)\in C^{(n)}$.

Observe  that if $j:V\to M\cong Ult(V,\mathcal{U})$, $M$ transitive, is the ultrapower elementary  embedding obtained from a non-principal $\kappa$-complete ultrafilter $\mathcal{U}$ on $\kappa$, then $2^{\kappa}<j(\kappa)<(2^{\kappa})^+$ (see \cite{K}). Hence, since $V_{j(\kappa)}\preceq_1 V$ implies that $j(\kappa)$ is a (strong limit) cardinal, $j$ cannot witness the $C^{(1)}$-measurability of $\kappa$.
Nonetheless, by using iterated ultrapowers (see \cite{J2}, 19.15), one has that for every cardinal $\alpha >2^{\kappa}$, the $\alpha$-th iterated ultrapower embedding 
$j_{\alpha}:V\to M_{\alpha}\cong Ult(V,\mathcal{U}_{\alpha})$, where $\mathcal{U}_{\alpha}$ is the $\alpha$-th iterate of $\mathcal{U}$, has critical point $\kappa$ and $j_{\alpha}(\kappa)=\alpha$.
So, if $\kappa$ is measurable, then for each $n$ one can always find an elementary embedding $j:V\to M$, $M$ transitive, with $j(\kappa)\in C^{(n)}$. We have thus shown the following.

\begin{proposition}
\label{prop1}
Every measurable cardinal  is   $C^{(n)}$-measurable, for all $n$.
\end{proposition}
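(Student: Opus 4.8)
The plan is to read off the proposition directly from the iterated-ultrapower computation recalled in the paragraph preceding the statement. Fix $n\in\omega$, and let $\mathcal{U}$ be a non-principal $\kappa$-complete ultrafilter on $\kappa$, which exists because $\kappa$ is measurable. The underlying idea is that, even though the single ultrapower $j:V\to Ult(V,\mathcal{U})$ sends $\kappa$ to an ordinal below $(2^\kappa)^+$ and hence possibly outside $C^{(1)}$, iterating $\mathcal{U}$ lets one push the image of $\kappa$ as high as desired.

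First I would use that $C^{(n)}$ is a proper class, and therefore unbounded in the ordinals, to pick some $\alpha\in C^{(n)}$ with $\alpha>2^\kappa$. Then I would form the $\alpha$-th iterated ultrapower embedding $j_\alpha:V\to M_\alpha\cong Ult(V,\mathcal{U}_\alpha)$, where $\mathcal{U}_\alpha$ is the $\alpha$-th iterate of $\mathcal{U}$. By the standard fact cited just above (\cite{J2}, 19.15), this embedding satisfies $crit(j_\alpha)=\kappa$ and, crucially, $j_\alpha(\kappa)=\alpha$. Since $M_\alpha$ is transitive and $\alpha\in C^{(n)}$, the embedding $j_\alpha$ is exactly a witness to the $C^{(n)}$-measurability of $\kappa$.

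As $n$ was arbitrary, this establishes the conclusion for all $n$. I do not expect a genuine obstacle here: the only nontrivial ingredient is the identity $j_\alpha(\kappa)=\alpha$, and this is precisely the content of the iterated-ultrapower computation quoted before the statement, while the existence of a suitable $\alpha$ is immediate from the unboundedness of $C^{(n)}$. The proposition is thus essentially a corollary of the preparatory discussion, and the write-up amounts to assembling these two observations.
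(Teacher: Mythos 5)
Your proposal is correct and is essentially identical to the paper's own argument: the paper proves Proposition \ref{prop1} precisely by iterating the ultrapower $\alpha$-many times for a cardinal $\alpha>2^\kappa$ in $C^{(n)}$, using that $j_\alpha(\kappa)=\alpha$. The only implicit point worth noting is that the cited fact requires $\alpha$ to be a \emph{cardinal}, which is automatic here since every element of $C^{(n)}$ for $n\geq 1$ is a (strong limit) cardinal, and the case $n=0$ is trivial.
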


A similar situation occurs in  the case of strong cardinals.

Let us say that  a cardinal $\kappa$ is \emph{$C^{(n)}$-strong} if for every $\lambda >\kappa$, $\kappa$ is \emph{$\lambda$-$C^{(n)}$-strong}, that is, there exists an elementary embedding $j:V\to M$, $M$ transitive, with critical point $\kappa$, and such that $j(\kappa)>\lambda$, $V_{\lambda}\subseteq M$, and $j(\kappa)\in C^{(n)}$.
Equivalently (see \cite{K}, 26.7), $\kappa$ is $\lambda$-$C^{(n)}$-strong if and only if  there exists  a  $(\kappa ,\beta)$-extender $E$, for some $\beta >|V_{\lambda}|$,  with $V_{\lambda}\subseteq M_E$ and $\lambda <j_E(\kappa)\in C^{(n)}$. 

%Observe that if $E$ is a $(\kappa ,\beta)$-extender (see \cite{K}) with $\beta\in C^{(n)}$, $n\geq 1$, then the associated elementary embedding  $j_E:V\to M_E$ witnesses the $C^{(n)}$-measurability of $\kappa$. On one hand, we always have $\beta \leq j_E(\kappa)$ (see \cite{J2} or \cite{K}). On the other hand, every ordinal in $M_E$ less than $j_E(\kappa)$ is represented  in the extender ultrapower by some $[a, f]$, where $a\in [\beta]^{<\omega}$ and $f:[\kappa]^{|a|}\to \kappa$; so since  $\beta$ is a strong limit cardinal,  $|j_E(\kappa)|\leq \beta\, \cdot \, \kappa^{\kappa}=\beta$, hence $j_E(\kappa)\leq \beta^+$.

%So  suppose  $\kappa$ is a $\lambda$-strong cardinal and $E$ is a $(\kappa ,\beta)$-extender, some $\beta$, that witnesses the $\lambda$-strongness of $\kappa$. This means that the associated elementary embedding $j_E:V\to M_E$ has critical point $\kappa$,  and $V_{\lambda}\subseteq M_E$ and $j_E(\kappa)>\lambda$. As we observed, if $\beta\in C^{(n)}$ (for $n\geq 1$), then $\beta = j_E(\kappa)\in C^{(n)}$. 

Suppose now that $j:V\to M$ witnesses  the $\lambda$-strongness of $\kappa$, with $j(\kappa)$ not necessarily in $C^{(n)}$. Let $E$ be the $(\kappa , j(\kappa))$-extender obtained from $j$, and let $j_E:V\to M_E$ be the corresponding $\lambda$-strong embedding (see \cite{K}). Then in $M_E$, $E':= j_E(E)$ is a $(j_E(\kappa), j_E(j(\kappa)))$-extender, which gives rise to an elementary embedding $j_{E'}:M_E\to M_{E'}$ with critical point $j_E(\kappa)$. Still in $M_E$, let $\mathcal{U}$ be the standard $j_E(\kappa)$-complete ultrafilter on $j_E(\kappa)$ derived from $j_{E'}$, i.e., $$\mathcal{U}=\{ X\subseteq j_E(\kappa): j_{E}(\kappa)\in j_{E'}(X)\}$$ and let $j_{\mathcal{U}}:M_E\to M$ be the corresponding elementary embedding.
Then one can iterate $j_{\mathcal{U}}$  $\alpha$-times, for some $\alpha\in C^{(n)}$ greater than $2^{j_E(\kappa)}$, so that if $j_{\alpha}:M_E\to M_{\alpha}$ is the resulting elementary embedding, then $j_{\alpha}(j_E(\kappa))=\alpha$. Letting $k:= j_{\alpha}\circ j_E$, one has that $k:V\to M_{\alpha}$ is a $\lambda$-strong elementary embedding with critical point $\kappa$ and with $k(\kappa)\in C^{(n)}$. We have thus proved the following.

\begin{proposition}
\label{prop2}
Every $\lambda$-strong cardinal  is $\lambda$-$C^{(n)}$-strong, for all $n$. Hence, every strong cardinal is $C^{(n)}$-strong, for every $n$.
\end{proposition}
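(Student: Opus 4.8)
The plan is to reproduce, one level up, the strategy behind Proposition \ref{prop1}: first secure $\lambda$-strongness with a single extender embedding, and then stretch the image of the critical point up to a member of $C^{(n)}$ by iterating a measure whose critical point already lies above $\lambda$, so that this second stage leaves $V_\lambda$ untouched. Thus I would start from an embedding $j:V\to M$ witnessing $\lambda$-strongness ($crit(j)=\kappa$, $V_\lambda\subseteq M$, $j(\kappa)>\lambda$) and pass to the embedding $j_E:V\to M_E$ given by the $(\kappa,j(\kappa))$-extender $E$ derived from $j$. This still witnesses $\lambda$-strongness, so $V_\lambda\subseteq M_E$ and $\lambda<j_E(\kappa)$, but now $E$ is a set and the image extender $E':=j_E(E)$ is available inside $M_E$.

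Next I would locate a $j_E(\kappa)$-complete ultrafilter $\mathcal{U}\in M_E$ on $j_E(\kappa)$ to iterate. Since a $\lambda$-strong cardinal is measurable, $\kappa$ is measurable in $V$, so by elementarity $M_E\models$``$j_E(\kappa)$ is measurable''; concretely $\mathcal{U}$ is the ultrafilter $\{X\subseteq j_E(\kappa): j_E(\kappa)\in j_{E'}(X)\}$ derived from $E'$. Iterating the ultrapower of $M_E$ by $\mathcal{U}$ and invoking the iterated-ultrapower fact already used for Proposition \ref{prop1} (\cite{J2}, 19.15), for every cardinal $\alpha>2^{j_E(\kappa)}$ the $\alpha$-th iterate $j_\alpha:M_E\to M_\alpha$ satisfies $j_\alpha(j_E(\kappa))=\alpha$; I choose such an $\alpha$ lying also in $C^{(n)}$, which is possible because $C^{(n)}$ is an unbounded class of cardinals. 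The key feature is that $crit(j_\alpha)=j_E(\kappa)>\lambda$.

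Finally I would set $k:=j_\alpha\circ j_E:V\to M_\alpha$. Then $crit(k)=\kappa$ and $k(\kappa)=j_\alpha(j_E(\kappa))=\alpha\in C^{(n)}$ with $\alpha>\lambda$. Moreover every element of $V_\lambda$ has rank $<\lambda<crit(j_\alpha)$, so $j_\alpha$ fixes $V_\lambda$ pointwise, and together with $V_\lambda\subseteq M_E$ this yields $V_\lambda\subseteq M_\alpha$. Hence $k$ witnesses the $\lambda$-$C^{(n)}$-strongness of $\kappa$, and the second assertion follows immediately: if $\kappa$ is $\lambda$-strong for every $\lambda$, then the construction makes it $\lambda$-$C^{(n)}$-strong for every $\lambda$, i.e. $C^{(n)}$-strong.

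The one genuinely delicate point is to enlarge $j_E(\kappa)$ until it lands in $C^{(n)}$ without destroying the inclusion $V_\lambda\subseteq M$, and the two-stage design is exactly what resolves it: the extender stage does all the work at and below $\lambda$ and lifts $\kappa$ strictly past $\lambda$, after which the iteration has critical point $j_E(\kappa)>\lambda$, is invisible on $V_\lambda$, and may be run as long as needed to place $\alpha=j_\alpha(j_E(\kappa))$ in $C^{(n)}$. The supporting facts—existence of the measure inside $M_E$ and the computation $j_\alpha(j_E(\kappa))=\alpha$ for cardinals $\alpha>2^{j_E(\kappa)}$—are standard iterated-ultrapower theory.
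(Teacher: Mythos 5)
Your proposal is correct and takes essentially the same route as the paper's proof: derive the $(\kappa,j(\kappa))$-extender $E$ from a $\lambda$-strong embedding, pass to $j_E:V\to M_E$, form $E'=j_E(E)$ inside $M_E$ and the ultrafilter $\mathcal{U}=\{X\subseteq j_E(\kappa):j_E(\kappa)\in j_{E'}(X)\}$, iterate it $\alpha$ times for a cardinal $\alpha\in C^{(n)}$ above $2^{j_E(\kappa)}$, and set $k=j_\alpha\circ j_E$. The extra details you supply (that $j_\alpha$ fixes $V_\lambda$ pointwise since its critical point $j_E(\kappa)$ exceeds $\lambda$, whence $V_\lambda\subseteq M_\alpha$) are exactly what the paper leaves implicit in asserting that $k$ is $\lambda$-strong.
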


So for measurable or strong cardinals $\kappa$, the requirement that $j(\kappa)$ belongs to $C^{(n)}$ for the corresponding elementary embeddings $j:V\to M$ does not yield stronger large cardinal notions. But as we shall see next the situation changes completely in the case of superstrong embeddings, that is, when $j$ is such that $V_{j(\kappa)}\subseteq M$.
In the following sections we will analyze the notion of $C^{(n)}$-cardinal at various levels of the usual large cardinal hierarchy, beginning with superstrong cardinals and up to rank-into-rank embeddings. In almost all cases we will show that the corresponding $C^{(n)}$-cardinals form a finer hierarchy. The notion of $C^{(n)}$-cardinal will prove especially useful in the region between supercompact cardinals and Vope\v{n}ka's Principle (VP). There we will establish new equivalences between the existence of $C^{(n)}$-extendible cardinals, restricted forms of VP, and natural reflection principles for classes of structures.  Some important regions of the $C^{(n)}$-cardinal hierarchy have not yet been explored, e.g., $C^{(n)}$-Woodin cardinals; some need further study, e.g., $C^{(n)}$-supercompact cardinals; and there are still many open questions, e.g., whether the $C^{(n)}$-supercompact cardinals form a hierarchy, or the exact relationship between $C^{(n)}$-supercompact and $C^{(n)}$-extendible cardinals. Further work along these lines is already under way.

Let us point out that the consistency of the existence of all the $C^{(n)}$-cardinals to be considered in this paper, except for those in the last section, follows from the consistency of the existence of an $E_0$ cardinal, that is, a cardinal $\kappa$ for which there exists a non-trivial elementary embedding $j:V_\delta \to V_\delta$, some $\delta$, with $\kappa =crit(j)$. In  $V_\delta$, $\kappa$ is     $C^{(n)}$-superstrong,   $C^{(n)}$-extendible, $C^{(n)}$-supercompact, $C^{(n)}$-$k$-huge, and $C^{(n)}$-superhuge, for all $n,k\geq 1$  (Theorem \ref{lasttheorem}, Section \ref{lastsection}). 

\section{$C^{(n)}$-superstrong cardinals}

We shall see next that in the case of superstrong cardinals $\kappa$, the requirement that $j(\kappa)\in C^{(n)}$, for $n>1$,  produces a hierarchy of ever stronger large cardinal principles.

\begin{definition}
A cardinal $\kappa$ is \emph{$C^{(n)}$-superstrong} if there exists an elementary embedding $j:V\to M$, $M$ transitive, with critical point $\kappa$, $V_{j(\kappa)}\subseteq M$, and $j(\kappa)\in C^{(n)}$.
\end{definition}

Notice that if $j:V\to M$ witnesses that $\kappa$ is $C^{(n)}$-superstrong, then $V_\kappa$ is an elementary substructure of $V_{j(\kappa)}$, and therefore $\kappa \in C^{(n)}$. Thus, every $C^{(n)}$-superstrong cardinal belongs to $C^{(n)}$.

\begin{proposition}
\label{prop4}
If $\kappa =crit (j)$, where $j:V\to M$ is an elementary embedding, with $M$ transitive and  $V_{j(\kappa )}\subseteq M$,  then $j(\kappa )\in C^{(1)}$. Hence, every superstrong cardinal is $C^{(1)}$-superstrong.
\end{proposition}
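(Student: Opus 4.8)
The plan is to reduce the claim $j(\kappa)\in C^{(1)}$ to showing that $j(\kappa)$ is an uncountable strong limit cardinal in $V$. By the discussion in the introduction, $C^{(1)}$ consists exactly of the uncountable cardinals $\alpha$ with $V_\alpha=H_\alpha$, and one checks directly that for an uncountable cardinal $\alpha$ this holds precisely when $\alpha$ is a strong limit cardinal (the inclusion $H_\alpha\subseteq V_\alpha$ uses only that $\alpha$ is a limit cardinal, while $V_\alpha\subseteq H_\alpha$ uses the strong limit property). So it suffices to prove $j(\kappa)$ is an uncountable strong limit cardinal. The initial observations are that $\kappa=crit(j)$ is measurable, hence inaccessible, so by elementarity $M\models$ ``$j(\kappa)$ is inaccessible''; in particular $j(\kappa)$ is a strong limit cardinal \emph{in the sense of} $M$, whence $(V_{j(\kappa)})^M=(H_{j(\kappa)})^M$.

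The crucial step is to transfer these facts from $M$ down to $V$ using the superstrongness hypothesis $V_{j(\kappa)}\subseteq M$. First I would note that $V_\beta$ is absolute for every $\beta<j(\kappa)$: since $(V_\beta)^V\in (V_{j(\kappa)})^V\subseteq M$ and $M$ is transitive, the rank recursion gives $(V_\beta)^M=(V_\beta)^V$. Next, because $j(\kappa)$ is inaccessible in $M$ we have $M\models |V_\beta|<j(\kappa)$, so a bijection of $(V_\beta)^V$ onto some ordinal below $j(\kappa)$ already exists in $M$; as this bijection is a genuine set of $V$, it witnesses $|V_\beta|^V<j(\kappa)$. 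Thus $|V_\beta|^V<j(\kappa)$ for every $\beta<j(\kappa)$.

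The hard part is that this does not yet show $j(\kappa)$ is a \emph{cardinal} in $V$: a surjection of some $\gamma<j(\kappa)$ onto $j(\kappa)$, or a short cofinal sequence, is a subset of $j(\kappa)\times j(\kappa)$ of rank $j(\kappa)$, so it need not lie in $V_{j(\kappa)}$ and hence need not be captured by $V_{j(\kappa)}\subseteq M$. To sidestep this I would argue by cardinal arithmetic instead of pushing the map into $M$. Suppose toward a contradiction that $|j(\kappa)|^V=\mu<j(\kappa)$. Taking $\beta=\mu+1$ in the bound just established gives $2^\mu\le |V_{\mu+1}|^V<j(\kappa)$, while $|j(\kappa)|=\mu$ forces $j(\kappa)<\mu^+\le 2^\mu$, a contradiction. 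Hence $j(\kappa)$ is a cardinal in $V$, and the bound $|V_\beta|^V<j(\kappa)$ then yields $2^\lambda<j(\kappa)$ for all $\lambda<j(\kappa)$, so $j(\kappa)$ is a strong limit cardinal; since $j(\kappa)>\kappa>\omega$ it is uncountable, and therefore $j(\kappa)\in C^{(1)}$. The second assertion of the proposition is then immediate, as any embedding witnessing superstrongness of $\kappa$ satisfies exactly the hypothesis $V_{j(\kappa)}\subseteq M$.
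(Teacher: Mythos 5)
Your overall strategy is the same as the paper's: use elementarity to see that $M$ believes $j(\kappa)$ is inaccessible (the paper: that $j(\kappa)\in C^{(1)}$), and then use $V_{j(\kappa)}\subseteq M$ to pull the relevant facts down to $V$; indeed your treatment of the transfer is more careful than the paper's terse one, and your cardinal-arithmetic argument that $j(\kappa)$ really is a cardinal of $V$ is exactly the right way to fill in the step the paper leaves implicit. However, your opening reduction contains a genuine error: it is not true that, for an uncountable cardinal $\alpha$, the equality $V_\alpha=H_\alpha$ is equivalent to $\alpha$ being a strong limit cardinal. The inclusion $V_\alpha\subseteq H_\alpha$ requires $|V_\beta|<\alpha$ for every $\beta<\alpha$, i.e., that $\alpha$ is a fixed point of the $\beth$-function, and this is strictly stronger than being a strong limit. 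For example, $\beth_\omega$ is an uncountable (singular) strong limit cardinal, yet $|V_{\omega+\omega}|=\beth_\omega$, so $V_{\omega+\omega}$ is an element of $V_{\beth_\omega}$ whose transitive closure has cardinality $\beth_\omega$; hence $V_{\omega+\omega}\notin H_{\beth_\omega}$, so $V_{\beth_\omega}\neq H_{\beth_\omega}$ and $\beth_\omega\notin C^{(1)}$. (Inside $M$ you are safe, because there you work with full inaccessibility, and regularity plus strong limit does give the $\beth$-fixed-point property; but in $V$ you have no regularity for $j(\kappa)$.) Consequently "uncountable strong limit cardinal" does not suffice for membership in $C^{(1)}$, and the final inference of your argument ("so $j(\kappa)$ is a strong limit cardinal \ldots therefore $j(\kappa)\in C^{(1)}$") does not follow.

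The good news is that what you actually prove is stronger than what you claim to need, so the repair is immediate. Your key estimate is $|V_\beta|^V<j(\kappa)$ for \emph{every} $\beta<j(\kappa)$, not merely $2^\lambda<j(\kappa)$ for cardinals $\lambda<j(\kappa)$. Together with your proof that $j(\kappa)$ is a cardinal of $V$, this yields both inclusions directly: if $x\in V_{j(\kappa)}$, then $x\in V_{\beta +1}$ for some $\beta<j(\kappa)$, so ${\rm TC}(x)\subseteq V_\beta$ and $|{\rm TC}(x)|\le|V_\beta|<j(\kappa)$, proving $V_{j(\kappa)}\subseteq H_{j(\kappa)}$; and $H_{j(\kappa)}\subseteq V_{j(\kappa)}$ holds because $j(\kappa)$ is a cardinal. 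Thus $j(\kappa)$ is an uncountable cardinal with $V_{j(\kappa)}=H_{j(\kappa)}$, which is precisely the paper's characterization of membership in $C^{(1)}$ from the introduction. With this substitution for the last step, your proof is correct and amounts to an expanded, more detailed version of the paper's own argument.
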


\begin{proof}
Since  $\kappa \in C^{(1)}$, $M$ satisfies that $j(\kappa)\in C^{(1)}$, i.e., $M$ satisfies that $j(\kappa)$ is a strong limit cardinal and $V_{j(\kappa)}=H_{j(\kappa)}$. But  since $(V_{j(\kappa)})^M=V_{j(\kappa)}$,  $j(\kappa)$ is, in $V$,  a strong limit cardinal with $V_{j(\kappa)}=H_{j(\kappa)}$, so  $j(\kappa )\in C^{(1)}$.
\end{proof}

Observe that for $n\geq 1$, the sentence ``$\kappa$ is $C^{(n)}$-superstrong" is $\Sigma_{n+1}$, for   $\kappa$ is $C^{(n)}$-superstrong if and only if  
$$\exists \beta \exists \mu\exists E  (\kappa < \beta <\mu \wedge \mu \in C^{(n)}\wedge E\mbox{ is a }(\kappa ,\beta)\mbox{-extender}\wedge  E\in V_{\mu}\,  \wedge$$ $$ V_{\mu}\models ``j_E(\kappa)\in C^{(n)} \wedge V_{j_E(\kappa)}\subseteq M_E").$$

%In contrast with the case of strong and supercompact cardinals, we shall see that not every superstrong cardinal is $C^{(n)}$-superstrong, the reason being that superstrong embeddings cannot be iterated. 

\begin{proposition}
\label{prop3}
For every $n\geq 1$, if $\kappa$ is $C^{(n+1)}$-superstrong, then there is a $\kappa$-complete normal ultrafilter $\mathcal{U}$ over $\kappa$ such that $$\{ \alpha <\kappa :\alpha \mbox{ is $C^{(n)}$-superstrong}\}\in \mathcal{U}.$$ 
Hence,  the first $C^{(n)}$-superstrong cardinal $\kappa$, if it exists, is not $C^{(n+1)}$-superstrong.
\end{proposition}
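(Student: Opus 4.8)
The plan is to exploit the $\Sigma_{n+1}$-complexity of the statement ``$\kappa$ is $C^{(n)}$-superstrong'' together with the fact that a $C^{(n+1)}$-superstrong embedding sends $\kappa$ into $C^{(n+1)}$, so that $V_{j(\kappa)}$ is $\Sigma_{n+1}$-correct and can ``see'' that $\kappa$ itself is $C^{(n)}$-superstrong. First I would fix an elementary embedding $j:V\to M$ witnessing $C^{(n+1)}$-superstrongness, so $\mathrm{crit}(j)=\kappa$, $V_{j(\kappa)}\subseteq M$, and $j(\kappa)\in C^{(n+1)}$. I would let $\mathcal{U}$ be the normal ultrafilter on $\kappa$ derived from $j$, namely $\mathcal{U}=\{X\subseteq\kappa:\kappa\in j(X)\}$; this is $\kappa$-complete and normal by the standard derivation. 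By the usual characterization, to show $\{\alpha<\kappa:\alpha\text{ is }C^{(n)}\text{-superstrong}\}\in\mathcal{U}$ it suffices to show that $M\models``\kappa\text{ is }C^{(n)}\text{-superstrong}''$, since by \L o\'s / normality this membership is equivalent to $\kappa\in j(\{\alpha<\kappa:\alpha\text{ is }C^{(n)}\text{-superstrong}\})$, and elementarity of $j$ converts the latter into the assertion that $\kappa$ is $C^{(n)}$-superstrong in $M$.

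So the crux is to verify that $M$ believes $\kappa$ is $C^{(n)}$-superstrong. Here I would use the $\Sigma_{n+1}$ formula displayed just before the proposition: $\kappa$ is $C^{(n)}$-superstrong iff there exist $\beta,\mu,E$ with $\kappa<\beta<\mu$, $\mu\in C^{(n)}$, $E$ a $(\kappa,\beta)$-extender in $V_\mu$, and $V_\mu\models``j_E(\kappa)\in C^{(n)}\wedge V_{j_E(\kappa)}\subseteq M_E''$. Since $\kappa$ is $C^{(n+1)}$-superstrong it is in particular $C^{(n)}$-superstrong (as $C^{(n+1)}\subset C^{(n)}$ and the witnessing embedding has $V_{j(\kappa)}\subseteq M$), so this $\Sigma_{n+1}$ sentence holds in $V$. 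The key point is that $\kappa=\mathrm{crit}(j)\in C^{(n+1)}$, so in particular $V_\kappa\preceq_{n+1}V$; hence the $\Sigma_{n+1}$ sentence asserting $C^{(n)}$-superstrongness of $\kappa$, which holds in $V$, reflects down to $V_\kappa$. The remaining step is to transfer this from $V_\kappa$ to $M$: since $V_{j(\kappa)}\subseteq M$ and $\kappa<j(\kappa)$, the structure $V_\kappa$ sits inside $M$, and I would check that $M$ computes the relevant extenders, iterates, and the relation $V_\mu\models(\cdots)$ correctly. Because all the witnessing data $\beta,\mu,E$ can be taken below $\kappa$ (as the $\Sigma_{n+1}$ statement reflects to $V_\kappa$), these witnesses lie in $V_\kappa\subseteq V_{j(\kappa)}\subseteq M$, and $M$ verifies the $\Sigma_{n+1}$ assertion about them. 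For the final sentence of the proposition, if $\kappa$ were the least $C^{(n)}$-superstrong cardinal yet also $C^{(n+1)}$-superstrong, then $\mathcal{U}$-many $\alpha<\kappa$ would be $C^{(n)}$-superstrong, contradicting minimality.

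I expect the main obstacle to be the bookkeeping in the transfer step: ensuring that the $\Sigma_{n+1}$ witness in $V_\kappa$ is correctly absorbed and evaluated inside $M$. The subtlety is that $C^{(n)}$ is only $\Pi_n$-definable (not $\Sigma_n$), so I must be careful that the clauses ``$\mu\in C^{(n)}$'' and ``$V_\mu\models j_E(\kappa)\in C^{(n)}$'' are interpreted absolutely between $V_\kappa$, $V$, and $M$ for the parameters at hand; the displayed $\Sigma_{n+1}$ normal form is designed precisely so that $\mu\in C^{(n)}$ is existentially witnessed and the inner $C^{(n)}$-clause is relativized to $V_\mu$, which makes it absolute once $V_\mu$ is present in both structures. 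Provided one checks that $V_{j(\kappa)}\subseteq M$ guarantees $M$ contains all relevant $V_\mu$ for $\mu<\kappa$ and computes their satisfaction relation correctly, the reflection from $V$ to $V_\kappa$ via $\kappa\in C^{(n+1)}$ and the absoluteness into $M$ combine to give $M\models``\kappa\text{ is }C^{(n)}\text{-superstrong}''$, completing the argument.
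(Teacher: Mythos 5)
Your opening intuition is exactly the right one (and is the paper's): since $j(\kappa)\in C^{(n+1)}$, the model $V_{j(\kappa)}$ is $\Sigma_{n+1}$-correct and therefore ``sees'' that $\kappa$ is $C^{(n)}$-superstrong. But your execution abandons this and replaces it with a step that fails. You cannot reflect the $\Sigma_{n+1}$ statement ``$\kappa$ is $C^{(n)}$-superstrong'' down to $V_\kappa$: that statement has $\kappa$ itself as a parameter, and $\kappa\notin V_\kappa$. Worse, your claim that ``all the witnessing data $\beta,\mu,E$ can be taken below $\kappa$'' directly contradicts the displayed normal form, which requires $\kappa<\beta<\mu$; the witnesses to $\kappa$'s superstrongness necessarily live \emph{above} $\kappa$, so they can never be absorbed into $V_\kappa$. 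The correct reflection target is $V_{j(\kappa)}$: since $\kappa\in V_{j(\kappa)}$, $j(\kappa)\in C^{(n+1)}$, and $\kappa$ is (in particular) $C^{(n)}$-superstrong in $V$, the $\Sigma_{n+1}$ statement holds in $V_{j(\kappa)}$, with witnesses of rank strictly between $\kappa$ and $j(\kappa)$.

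There is a second, related gap: even granting that the witnesses lie in $M$, you never justify that $M$ \emph{evaluates} the statement correctly. The clause ``$\mu\in C^{(n)}$'' in the displayed formula is \emph{not} relativized to $V_\mu$ (only the clauses about $j_E(\kappa)$ and $M_E$ are); it is a genuine $\Pi_n$ assertion about the ambient universe, and $V_\mu\preceq_n V$ does not imply $V_\mu\preceq_n M$. The paper closes this hole by noting that $\kappa\in C^{(n+1)}$ (every $C^{(n+1)}$-superstrong cardinal lies in $C^{(n+1)}$), hence by elementarity $M\models ``j(\kappa)\in C^{(n+1)}"$; since $(V_{j(\kappa)})^M=V_{j(\kappa)}$ and satisfaction in set models is absolute, $M$ sees that its own $V_{j(\kappa)}$ satisfies the $\Sigma_{n+1}$ statement and that this $V_{j(\kappa)}$ is $\Sigma_{n+1}$-correct \emph{from $M$'s point of view}, which is what lets the statement transfer up to $M$. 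You use $\kappa\in C^{(n+1)}$ only for the (unusable) downward reflection to $V_\kappa$ and never derive $M\models ``j(\kappa)\in C^{(n+1)}"$, so the crucial transfer into $M$ is unsupported. Your derivation of $\mathcal U$ from $j$ and the final minimality argument are fine; it is the middle of the proof that must be rebuilt along the lines above.
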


\begin{proof}
Suppose $\kappa$ is $C^{(n+1)}$-superstrong, 
witnessed by a $(\kappa , \beta)$-exten\-der $E$ with associated elementary embedding $j_E=j:V\to M$ such that $\beta =j(\kappa)$ and $V_{j(\kappa)}\subseteq M$. Since $j(\kappa)\in C^{(n+1)}$, 
$$V_{j(\kappa)}\models ``\kappa\mbox{ is $C^{(n)}$-superstrong}".$$
And since $\kappa \in C^{(n+1)}$, $M\models ``j(\kappa)\in C^{(n+1)}"$.
Hence, since $V_{j(\kappa)}=(V_{j(\kappa)})^M$, and since ``$\kappa$ is $C^{(n)}$-superstrong" is a $\Sigma_{n+1}$ statement, we have:
$$M\models ``\kappa \mbox{ is $C^{(n)}$-superstrong}".$$
Now using a standard argument (see, e.g., \cite{K}, 5.14, 5.15, or 22.1) one can show that the set 
%by normality of $E$, the set  $\{ \{ \alpha \}: \alpha  <\kappa \wedge \alpha \mbox{ is $C^{(n)}$-superstrong}\}$ belongs to $E_{\{ \kappa\}}$. 
$\{ \alpha <\kappa: \alpha \mbox{ is $C^{(n)}$-superstrong}\}$ belongs to the $\kappa$-complete normal ultrafilter $\mathcal{U}:= \{ X\subseteq \kappa: \kappa \in j(X)\}$.
\end{proof}

The following Proposition gives an upper bound on the relative position of $C^{(n)}$-superstrong cardinals in the usual large cardinal hierarchy. Recall that $\kappa$ is \emph{$\lambda$-supercompact} if there is an elementary embedding $j:V\to M$, with $M$ transitive, $crit(j)=\kappa$, $j(\kappa)>\lambda$, and $M$ closed under $\lambda$-sequences. Equivalently, $\kappa$ is $\lambda$-supercompact if there exists a $\kappa$-complete, fine, and normal ultrafilter over $\mathcal{P}_\kappa (\lambda)$ (see \cite{K}, 22.7). $\kappa$ is \emph{supercompact} if it is $\lambda$-supercompact for  all $\lambda$.

\begin{proposition}
\label{supercompactimpliessuperstrong}
If $\kappa$ is $2^{\kappa}$-supercompact and belongs to $C^{(n)}$, then there is a $\kappa$-complete normal ultrafilter $\mathcal{U}$ over $\kappa$ such that the set of $C^{(n)}$-superstrong cardinals smaller than $\kappa$ belongs to $\mathcal{U}$.
\end{proposition}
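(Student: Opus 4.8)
The plan is to take an elementary embedding $j\colon V\to M$ witnessing the $2^\kappa$-supercompactness of $\kappa$, so that $crit(j)=\kappa$, $j(\kappa)>2^\kappa$, and ${}^{2^\kappa}M\subseteq M$, and to let $\mathcal{U}=\{X\subseteq\kappa:\kappa\in j(X)\}$ be the induced $\kappa$-complete normal ultrafilter. Since $j(\{\alpha<\kappa:\alpha\text{ is }C^{(n)}\text{-superstrong}\})=\{\alpha<j(\kappa):M\models\alpha\text{ is }C^{(n)}\text{-superstrong}\}$, the desired conclusion that $\{\alpha<\kappa:\alpha\text{ is }C^{(n)}\text{-superstrong}\}\in\mathcal{U}$ is equivalent to $M\models$ ``$\kappa$ is $C^{(n)}$-superstrong''. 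So the whole argument reduces to establishing this single reflection statement inside $M$.

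To witness the $C^{(n)}$-superstrongness of $\kappa$ in $M$ I would use the $(\kappa,j(\kappa))$-extender $E$ derived from $j$, that is, $E_a=\{X\subseteq[\kappa]^{|a|}:a\in j(X)\}$ for $a\in[j(\kappa)]^{<\omega}$. First I would check that $E\in M$: since $\kappa$ is $2^\kappa$-supercompact we have $V_{\kappa+1}\subseteq M$, and because $j\upharpoonright V_{\kappa+1}$ is a sequence of length $2^\kappa$ of elements of $M$, also $j\upharpoonright V_{\kappa+1}\in M$; the extender $E$ is then definable in $M$ from this restriction together with $j(\kappa)$, hence $E\in M$. As $\mathcal{P}(\kappa)^M=\mathcal{P}(\kappa)^V$, the model $M$ correctly sees $E$ as a $(\kappa,j(\kappa))$-extender with critical point $\kappa$.

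The delicate point, and the crux of the argument, is that although $E$ is \emph{not} a superstrong extender in $V$ (indeed $V_{j(\kappa)}\not\subseteq M$ for a merely $2^\kappa$-supercompact $j$), it nevertheless witnesses superstrongness of $\kappa$ from the point of view of $M$. Concretely, I would show $j_E^M(\kappa)=j(\kappa)$: the representing functions $f\colon[\kappa]^{<\omega}\to\kappa$ and the measures $E_a$ used to compute $j_E(\kappa)$ all lie in $V_{\kappa+1}\subseteq M$ and are common to $V$ and $M$, so $j_E^M(\kappa)=j_E^V(\kappa)$, and the latter equals $j(\kappa)$ by the usual factor map $k\colon Ult(V,E)\to M$, for which $k\upharpoonright j(\kappa)=\mathrm{id}$ and $k(j_E^V(\kappa))=j(\kappa)$. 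Since $Ult(M,E)$ and $M$ agree below $j(\kappa)=j_E^M(\kappa)$, one gets $(V_{j(\kappa)})^M\subseteq Ult(M,E)$ automatically, so $M\models$ ``$E$ witnesses that $\kappa$ is superstrong''.

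Finally I would add in the $C^{(n)}$ layer. Since $\kappa\in C^{(n)}$ and membership in $C^{(n)}$ is $\Pi_n$-definable, elementarity of $j$ gives $M\models j(\kappa)\in C^{(n)}$; as $j_E^M(\kappa)=j(\kappa)$, the embedding $j_E^M$ witnesses inside $M$ that $\kappa$ is $C^{(n)}$-superstrong, which is exactly what was needed, and reflection through $\mathcal{U}$ then yields the proposition. The hard part will be the third step: recognizing that the extender $E$, though not superstrong in $V$, becomes superstrong internally to $M$, and verifying carefully that $j_E^M(\kappa)=j(\kappa)$. This is precisely where the $2^\kappa$-closure of $M$ is essential — both to capture $E$ and to guarantee that $V$ and $M$ share all the representing functions — and it is the reason the hypothesis must be $2^\kappa$-supercompactness rather than supercompactness of some lower degree.
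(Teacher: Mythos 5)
Your overall strategy is sound and is in fact a close cousin of the paper's proof: both arguments reduce the proposition, via the derived normal ultrafilter $\mathcal{U}$, to showing that $M$ satisfies a strong statement about $\kappa$, and both exploit the fact that $j\restriction V_{\kappa+1}\in M$ (which follows from closure of $M$ under $2^\kappa$-sequences). Your reduction to ``$M\models \kappa$ is $C^{(n)}$-superstrong'' is exact, your verification that $E\in M$ is fine, and the $C^{(n)}$ bookkeeping ($\kappa\in C^{(n)}$ gives $M\models j(\kappa)\in C^{(n)}$, and $j_E^M(\kappa)=j(\kappa)$) is handled correctly.

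The gap is at the step you declare ``automatic''. From the fact that $V$ and $M$ have the same functions $f:[\kappa]^{<\omega}\to V_\kappa$ and the same measures $E_a$, you correctly get that the two ultrapowers compute the same ordinal $j_E^M(\kappa)=j_E^V(\kappa)=j(\kappa)$, and even the same transitive set $(V_{j_E(\kappa)})^{Ult(M,E)}=(V_{j_E(\kappa)})^{Ult(V,E)}$. But neither this, nor the fact that the factor map $k:Ult(V,E)\to M$ is the identity on ordinals below $j(\kappa)$, yields that the ultrapower agrees with $M$ below $j(\kappa)$: that agreement is exactly the assertion that $(V_{j(\kappa)})^M$ is contained in the range of $k$, and it needs a separate argument. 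Namely: given $x\in (V_{j(\kappa)})^M$, use that $j(\kappa)$ is inaccessible in $M$ to find $\xi<j(\kappa)$ with $x\in (V_\xi)^M$; since $\mathrm{ran}(k)$ is an elementary substructure of $M$ containing $(V_\xi)^M$ and every ordinal below $j(\kappa)$, it contains a bijection $b$ of $(V_\xi)^M$ with some ordinal $\eta<j(\kappa)$, whence $x=b(\gamma)\in\mathrm{ran}(k)$ for some $\gamma<\eta$; then the transitive collapse $k^{-1}$ is the identity on $(V_{j(\kappa)})^M$. Without this, nothing rules out $Ult(M,E)$ simply missing elements of $(V_{j(\kappa)})^M$ --- a general $(\kappa,\lambda)$-extender ultrapower need not absorb the $V_\lambda$ of the model it is derived from, and your representing-functions argument only controls objects of rank $<j_E(\kappa)$ \emph{inside the ultrapower}, not which such objects of $M$ get represented. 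Note also that you locate the difficulty in verifying $j_E^M(\kappa)=j(\kappa)$; that is actually the easy half. The missing argument is precisely the content of Kanamori 26.11(a) (extendibility of degree $\kappa+1$ implies superstrongness), which is what the paper cites: its proof only establishes the much cheaper statement $M\models$ ``$\kappa$ is $\kappa+1$-$C^{(n)}$-extendible'' (requiring nothing beyond $j\restriction V_{\kappa+1}\in M$ and $M\models j(\kappa)\in C^{(n)}$), reflects that to a $\mathcal{U}$-large set of $\alpha+1$-$C^{(n)}$-extendible $\alpha<\kappa$, and then applies 26.11(a) in $V$ to each such $\alpha$ --- thereby outsourcing exactly the step your write-up leaves unproved.
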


\begin{proof}
Let $j:V\to M$ be an elementary embedding coming from a $\kappa$-complete fine and normal ultrafilter $\mathcal{V}$ on $\mathcal{P}_\kappa (2^\kappa)$.
Let $j^\ast :=j\restriction V_{\kappa +1}$. So, $j^\ast:V_{\kappa +1}\to M_{j(\kappa)+1}$ is elementary and  $j^\ast \in M$. Hence  $M\models ``j^\ast :V_{\kappa +1}\to V_{j(\kappa)+1}$ is elementary". Since $\kappa\in C^{(n)}$, also $M\models ``j(\kappa)\in C^{(n)}"$. Thus, $M\models ``\kappa \mbox{ is }\kappa +1$-$C^{(n)}\mbox{-extendible}"$ (see Definition \ref{definition1} below). Hence, 
%$\{ x\in \mathcal{P}_\kappa (2^\kappa ): ot(x\cap \kappa)$ is $ot(x\cap \kappa )+1$-$C^{(n)}$-extendible$\}\in \mathcal{V}$. Letting $\mathcal{U}$ be the projection of $\mathcal{V}$ on $\kappa$, we have
if $\mathcal{U}$ is the standard ultrafilter over $\kappa$ derived from $j$, we have
$$\{ \alpha <\kappa : \alpha \mbox{ is } \alpha +1\mbox{-}C^{(n)}\mbox{-extendible}\}\in \,  \mathcal{U}.$$
Now as in \cite{K}, Proposition 26.11 (a), one can show that if $\alpha$ is $\alpha +1$-$C^{(n)}$-extendible, then $\alpha$ is $C^{(n)}$-superstrong.
\end{proof}

\section{$C^{(n)}$-extendible cardinals}
\label{sectionextendible}
Recall that a cardinal $\kappa$  is  \emph{$\lambda$--extendible} if there is an elementary embedding $j:V_{\lambda}\to V_{\mu}$, some $\mu$, with critical point $\kappa$ and such that $j(\kappa)>\lambda$.
And $\kappa$ is  \emph{extendible} if it is $\lambda$--extendible for all $\lambda >\kappa$.

The next lemma implies that every extendible cardinal is supercompact.

\begin{lemma}[M. Magidor \cite{M}]
\label{Magidor}
Suppose $j:V_\lambda \to V_\mu$ is elementary, $\lambda$ is a limit ordinal, and $\kappa$ is the critical point of $j$. Then $\kappa$ is $<\lambda$-supercompact.
\end{lemma}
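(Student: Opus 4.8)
The plan is to prove directly that $\kappa$ is $\gamma$-supercompact for every $\gamma$ with $\kappa\le\gamma<\lambda$; by the normal-measure characterization of supercompactness it suffices to produce, for each such $\gamma$, a $\kappa$-complete, fine, normal ultrafilter on $\mathcal{P}_\kappa(\gamma)$. A preliminary observation is that $\mu$ is again a limit ordinal: the sentence $\forall x\,\exists y\,(x\in y)$ holds in $V_\lambda$ exactly because $\lambda$ is a limit, so by elementarity it holds in $V_\mu$. Consequently $V_\mu$ computes power sets and cardinalities of objects of rank below $\mu$ correctly, and since $\gamma<\lambda$ we have $\mathcal{P}_\kappa(\gamma),\,\mathcal{P}(\mathcal{P}_\kappa(\gamma))\in V_\lambda$ and $j(\gamma)<\mu$.

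The natural candidate is the measure derived from $j$ with seed $s:=\{j(\xi):\xi<\gamma\}$, namely $\mathcal{U}=\{X\subseteq\mathcal{P}_\kappa(\gamma):s\in j(X)\}$. Granting that $\mathcal{U}$ is an ultrafilter on $\mathcal{P}_\kappa(\gamma)$, the remaining verifications are routine: $\kappa$-completeness follows from $\mathrm{crit}(j)=\kappa$; fineness holds because $j(\xi)\in s$ for every $\xi<\gamma$; and normality is the usual regressive-function computation (as in the derivation of supercompactness measures in \cite{K}). The one genuine point is whether $\mathcal{U}$ is a measure on $\mathcal{P}_\kappa(\gamma)$ at all, i.e.\ whether $s\in j(\mathcal{P}_\kappa(\gamma))$. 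Now $j(\mathcal{P}_\kappa(\gamma))=\mathcal{P}_{j(\kappa)}(j(\gamma))$ as computed correctly in $V_\mu$, and since $s\subseteq j(\gamma)$ has true cardinality $\gamma$, this membership holds precisely when $\gamma<j(\kappa)$. Thus the direct derivation disposes of all $\gamma<j(\kappa)$; in particular, whenever $j(\kappa)\ge\lambda$ —for instance in the extendibility situation, where one always arranges $j(\kappa)>\lambda$— the proof is already complete.

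The remaining, and essential, difficulty is the range $j(\kappa)\le\gamma<\lambda$, and this is where I expect the real work. Here $j$ by itself cannot measure $\mathcal{P}_\kappa(\gamma)$: fineness forces any usable seed to contain $\{j(\xi):\xi<\gamma\}$, hence to have cardinality at least $\gamma\ge j(\kappa)$, so it cannot belong to $j(\mathcal{P}_\kappa(\gamma))=\mathcal{P}_{j(\kappa)}(j(\gamma))$. The tempting remedy—replacing $j$ by an iterate $j^{(n)}$ with critical point still $\kappa$ but with $j^{(n)}(\kappa)$ pushed above $\gamma$ along the critical sequence—does not in general survive, since defining $j^{(n)}$ on a rank-initial-segment large enough to contain $\mathcal{P}_\kappa(\gamma)$ requires the intermediate images to remain inside $V_\lambda=\mathrm{dom}(j)$, which fails once $j$ sends an ordinal below $\lambda$ up to or past $\lambda$. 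Instead I would invoke Magidor's reflection characterization of $\gamma$-supercompactness, which reduces it to the existence of a \emph{compressing} elementary embedding $e:V_{\bar\eta}\to V_{\eta}$ with $\bar\eta<\kappa<\gamma<\eta$, critical point $\bar\kappa<\bar\eta$, and $e(\bar\kappa)=\kappa$; from such an $e$ (with $\gamma$ in its range) the desired normal fine measure on $\mathcal{P}_\kappa(\gamma)$ is read off in the standard way. The crux is therefore to manufacture such a compressing embedding out of $j$, which I would attempt by a L\"owenheim--Skolem hull argument inside $V_\mu$: one reflects, via the elementarity of $j$, the existence of a suitable elementary embedding with critical point $\kappa$ down to a collapsed embedding whose critical point maps to $\kappa$, and then transfers this back to $V$. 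Producing this compressing embedding, and in particular arranging that $\kappa$ lie in its range, is the main obstacle; once it is in hand, the passage to the supercompactness measure, and hence the conclusion that $\kappa$ is $<\lambda$-supercompact, is routine.
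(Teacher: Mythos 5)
Your handling of the range $\gamma<j(\kappa)$ is correct and coincides with the paper's first step: the derived measure $\{X\subseteq\mathcal{P}_\kappa(\gamma):j''\gamma\in j(X)\}$ works exactly when $\gamma<j(\kappa)$, and your observation that fineness forces any seed to contain $j''\gamma$, so that no measure derived from $j$ alone can exist once $\gamma\geq j(\kappa)$, is also right. The problem is that for $j(\kappa)\leq\gamma<\lambda$ you prove nothing: you reduce the lemma to producing a compressing embedding $e:V_{\bar\eta}\to V_\eta$ with $\bar\eta<\kappa$, $e(\bar\kappa)=\kappa$ and $\gamma$ in the range of $e$, and then concede that constructing $e$ ``is the main obstacle''. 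That construction is the entire content of the lemma in this range, and the reflection strategy you sketch cannot supply it: the witness available inside $V_\mu$ is $j\restriction V_{\gamma'}$ (a limit $\gamma'$ with $\gamma<\gamma'<\lambda$), whose domain rank $\gamma'$ lies \emph{above} $\gamma\geq j(\kappa)$; hence the statement that is actually true in $V_\mu$ with parameters in the range of $j$, and so can be pulled back through $j$, cannot include the smallness requirement $\bar\eta<\kappa$. Without it, nothing prevents $e^{-1}(\gamma)\geq\kappa$ for the pulled-back embedding, in which case the seed $e''e^{-1}(\gamma)$ has cardinality at least $\kappa$, lies outside $\mathcal{P}_\kappa(\gamma)$, and your ``routine'' derivation fails for exactly the reason it failed for $j$ itself.

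The paper settles this range with a tool that never appears in your proposal: Kunen's theorem. It sets $j^1=j$, $j^{m+1}=j\circ j^m$, and argues that some $j^m(\kappa)>\gamma$, since otherwise $\delta:=\sup_m j^m(\kappa)\leq\gamma<\lambda$ would satisfy $j(\delta)=\delta$, making $j\restriction V_{\delta+2}:V_{\delta+2}\to V_{\delta+2}$ a nontrivial elementary embedding; the measure is then derived from $j^m$. Your objection that such iterates need not be defined on enough of $V_\lambda$ is, in fact, a legitimate criticism of this terse write-up: restricting a rank-into-rank embedding $j_0:V_\delta\to V_\delta$ with critical sequence $\langle\kappa_i\rangle_{i<\omega}$ to $\lambda=\kappa_2+\omega$ and taking $\gamma=\kappa_2$, one gets $j^3(\kappa_0)=\kappa_3>\gamma$ while $(j^3)''\gamma$ is undefined (already $j^2(\kappa_1+\omega)$ is undefined), even though the conclusion of the lemma of course holds there. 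But the cure is to repair the iteration, not to abandon it: by your first case $\kappa$ is ${<}j(\kappa)$-supercompact; since a normal fine measure on $\mathcal{P}_\kappa(\delta)$ has rank below $\delta+\omega$, such statements are absolute between $V_\lambda$, $V_\mu$ and $V$, so elementarity of $j$ makes $j(\kappa)$ ${<}j^2(\kappa)$-supercompact in $V$; an ultrapower embedding with critical point $j(\kappa)$ and sufficiently closed target fixes $\kappa$, so applying it to ``$\kappa$ is ${<}j(\kappa)$-supercompact'' and using that closure pushes $\kappa$'s supercompactness up to any $\gamma<j^2(\kappa)$; iterating climbs the critical sequence, which by the Kunen argument cannot stall below $\lambda$. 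In short: your case division and your diagnosis of the obstruction are sound, but the case that carries all the weight is left open, and the paper's key ingredient --- Kunen's theorem applied to the critical sequence of $j$ --- is absent from your proposal altogether.
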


%\begin{proof}
%For each $\alpha <\lambda$, if $\alpha \geq \kappa$, then the ultrafilter $\mathcal{U}$ over $\mathcal{P}_\kappa (\alpha)$ defined by:
%$$X\in \mathcal{U}\mbox{ if and only if }j''\alpha \in j(X)$$
%is $\kappa$-complete, fine, and normal.
%\end{proof}

\begin{proof}
Fix $\gamma <\lambda$ and define
$$\mathcal{U}_\gamma =\{ X\subseteq \mathcal{P}_\kappa (\gamma): j''\gamma \in j(X)\}.$$
Note that this makes sense if $j(\kappa )>\gamma$, in which case it is easy to check that $\mathcal{U}_\gamma$ is a $\kappa$-complete, fine, and  normal measure. Otherwise, let $j^1 =j$ and $j^{m+1}=j\circ j^m$. If $j^m(\kappa )>\gamma$  for some $m$, then define $\mathcal{U}_\gamma$ using $j^m$ instead of $j$.  But such an $m$ does exist, for otherwise $\delta := sup_m(j^m(\kappa ))\leq \gamma <\lambda$, and then since $j(\delta)=\delta$ we would have $j\restriction V_{\delta +2}:V_{\delta +2}\to V_{\delta +2}$ is elementary with critical point $\kappa$, contradicting Kunen's Theorem (\cite{Ku}; see also \cite{K}, 23.14).
\end{proof}

%And the existence of an almost-huge cardinal $\kappa$ implies the existence of many extendible cardinals below $\kappa$ (see \cite{K} or \cite{J2}). 
%Note that $``x$ is an extendible cardinal" is a $\Pi_3$ property of $x$.
%If $\kappa$ is extendible, then $\kappa \in C^{(3)}$ (see \cite{K}). Hence if there exists an extendible cardinal, the class of inaccessible cardinals is unbounded.

\begin{definition}
\label{definition1}
For a cardinal $\kappa$  and $\lambda >\kappa$, we say that $\kappa$ is \emph{$\lambda$-$C^{(n)}$-extendible} if there is an elementary embedding $j:V_{\lambda}\to V_{\mu}$, some $\mu$, with critical point $\kappa$, and such that $j(\kappa)>\lambda$ and $j(\kappa)\in C^{(n)}$.

We say that $\kappa$ is  \emph{$C^{(n)}$-extendible} if it is $\lambda$-$C^{(n)}$-extendible for all $\lambda >\kappa$.
\end{definition}

%It follows from the next Proposition that a cardinal is extendible if and only if it is $C^{(1)}$-extendible.

\begin{proposition}
\label{proposition3}
Every extendible cardinal is $C^{(1)}$-extendible.
\end{proposition}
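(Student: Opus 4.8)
The plan is to exhibit, for an extendible $\kappa$ and any $\lambda>\kappa$, an extendibility embedding whose image of $\kappa$ falls into $C^{(1)}$, using the description from the introduction that $C^{(1)}$ is exactly the class of uncountable cardinals $\alpha$ with $V_\alpha=H_\alpha$, i.e. the uncountable strong limit cardinals. First I would reduce to the case $\lambda\in C^{(1)}$. Given an arbitrary $\lambda>\kappa$, pick $\lambda'\in C^{(1)}$ with $\lambda'>\lambda$; a witnessing embedding $j:V_{\lambda'}\to V_\mu$ with $crit(j)=\kappa$, $j(\kappa)>\lambda'$ and $j(\kappa)\in C^{(1)}$ restricts to $j\restriction V_\lambda:V_\lambda\to V_{j(\lambda)}$, which is again elementary (satisfaction for the set model $V_\lambda$ is definable in $V_{\lambda'}$ and is carried over by the elementarity of $j$), has critical point $\kappa$, and keeps the same value $j(\kappa)\in C^{(1)}$ with $j(\kappa)>\lambda$. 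So it suffices to treat $\lambda\in C^{(1)}$.

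Next, fix $\lambda\in C^{(1)}$ and use extendibility to obtain $j:V_\lambda\to V_\mu$ elementary with $crit(j)=\kappa$ and $j(\kappa)>\lambda$. Since $\kappa$ is extendible it is in particular strongly inaccessible, hence an uncountable strong limit cardinal, so $\kappa\in C^{(1)}$; and because $\lambda\in C^{(1)}$ the model $V_\lambda$ computes the predicate ``$\kappa\in C^{(1)}$'' correctly (it agrees with $V$ on power sets and cardinalities below $\lambda$), whence $V_\lambda\models\kappa\in C^{(1)}$. Applying the elementarity of $j$ together with the fact (a theorem of a finite fragment of ZFC, from the introduction) that internally $C^{(1)}$ is exactly the class of uncountable strong limit cardinals, I obtain $V_\mu\models$ ``$j(\kappa)$ is an uncountable strong limit cardinal''.

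The crux is transferring this statement downward to $V$. Here I would exploit that $\lambda$, being a cardinal, is a limit ordinal, so $\mu$ is a limit ordinal with $\mu>j(\kappa)$, giving $\mu\geq j(\kappa)+\omega$. For any $\rho<j(\kappa)$ every subset of $\rho$ has rank at most $\rho<\mu$, so $(\mathcal{P}(\rho))^{V_\mu}=\mathcal{P}(\rho)$; hence a bijection in $V_\mu$ from $\mathcal{P}(\rho)$ onto an ordinal $\delta<j(\kappa)$ is a genuine such bijection, yielding $2^\rho<j(\kappa)$ in $V$. Likewise, were $j(\kappa)$ not a cardinal in $V$, a collapsing bijection would have rank below $j(\kappa)+\omega\leq\mu$ and so would lie in $V_\mu$, contradicting $V_\mu\models$ ``$j(\kappa)$ is a cardinal''. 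Therefore $j(\kappa)$ is a genuine uncountable strong limit cardinal, i.e. $j(\kappa)\in C^{(1)}$, which is what was needed. I expect the delicate point to be precisely this absoluteness bookkeeping --- ensuring that $V_\mu$ contains all the relevant power sets and collapsing maps below $j(\kappa)$ --- rather than the embedding manipulations, which are routine.
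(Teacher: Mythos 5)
Your reduction to the case $\lambda\in C^{(1)}$ (restricting an embedding $j:V_{\lambda'}\to V_\mu$ to $j\restriction V_\lambda:V_\lambda\to V_{j(\lambda)}$) is correct, and in fact makes explicit a step the paper leaves implicit. The gap is at the very end: you identify $C^{(1)}$ with the class of uncountable strong limit cardinals, and that identification is false. What the paper establishes is that $C^{(1)}$ is the class of uncountable cardinals $\alpha$ with $V_\alpha=H_\alpha$; membership in $C^{(1)}$ only \emph{implies} being a strong limit cardinal, not conversely. Unwinding $V_\alpha=H_\alpha$, the condition says $|V_\beta|<\alpha$ for every $\beta<\alpha$, i.e.\ $\alpha$ is a fixed point of the $\beth$-function. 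For example, $\beth_\omega$ is an uncountable strong limit cardinal, yet $V_{\omega+\omega}\in V_{\beth_\omega}$ has transitive closure of size $\beth_\omega$, so $V_{\beth_\omega}\ne H_{\beth_\omega}$ and $\beth_\omega\notin C^{(1)}$. Consequently your absoluteness argument, which correctly shows that $j(\kappa)$ is a genuine uncountable strong limit cardinal of $V$, does not yield $j(\kappa)\in C^{(1)}$, which is what the definition of $C^{(1)}$-extendibility demands. (The ``internal characterization'' you invoke fails for the same reason: in $V_\lambda$, too, strong limit cardinals need not be $\Sigma_1$-correct.)

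The fix requires only transferring the right first-order statement, with exactly the bookkeeping you already set up. Since $\kappa$ is inaccessible, it is a $\beth$-fixed point, and $V_\lambda\models$ ``for every $\beta<\kappa$ there exist $\gamma<\kappa$ and a surjection $f:\gamma\to V_\beta$'' (true witnesses have rank $<\lambda$). By elementarity, $V_\mu$ satisfies the same statement about $j(\kappa)$; since $(V_\beta)^{V_\mu}=V_\beta$ for $\beta<j(\kappa)$ and a surjection lying in $V_\mu$ is a genuine surjection, this gives $|V_\beta|<j(\kappa)$ for all $\beta<j(\kappa)$ in $V$. Hence $j(\kappa)$ is a $\beth$-fixed point, so it is automatically an uncountable cardinal (your separate collapsing-map argument becomes unnecessary) with $V_{j(\kappa)}=H_{j(\kappa)}$, i.e.\ $j(\kappa)\in C^{(1)}$. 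The paper's own route is slightly different but rests on the same correct characterization: it transfers ``$\lambda'$ is a cardinal and $V_{\lambda'}=H_{\lambda'}$'' through $j$ to conclude $\mu\in C^{(1)}$, and then pulls the $\Pi_1$ statement ``$j(\kappa)\in C^{(1)}$'' down from $V_\mu$ to $V$ using $V_\mu\preceq_1 V$.
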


\begin{proof}
Suppose $\kappa$ is extendible and $\lambda$ is greater than $\kappa$. Pick $\lambda'\geq \lambda$ in  $C^{(1)}$, and let $j:V_{\lambda'}\to V_{\mu}$ be an elementary embedding with $crit(j)=\kappa$ and $j(\kappa)>\lambda'$. Since $\lambda'$  is a cardinal and $V_{\lambda'}=H_{\lambda'}$, by elementarity of $j$ we  also have that $\mu$ is a cardinal and $V_{\mu}=H_\mu$. Hence $\mu \in C^{(1)}$. And since, again by elementarity, $V_{\mu}\models j(\kappa)\in C^{(1)}$, it follows that $j(\kappa)\in C^{(1)}$. 
\end{proof}

Notice that if $j:V_\lambda \to V_\mu$ has critical point $\kappa$, and $\kappa ,\lambda,  \mu \in C^{(n)}$, then $j(\kappa)\in C^{(n)}$  follows automatically.

Clearly,  if $\kappa$ is $C^{(n)}$-extendible, then  $\kappa \in C^{(n)}$. But more is true.

\begin{proposition}
\label{3.5}
If $\kappa$ is $C^{(n)}$-extendible, then $\kappa\in C^{(n+2)}$.
\end{proposition}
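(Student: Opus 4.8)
The plan is to climb from $\kappa\in C^{(n)}$ --- which we already know holds, being recorded just before the statement --- to $\kappa\in C^{(n+1)}$ and then to $\kappa\in C^{(n+2)}$, gaining one level of correctness at each step. By the Tarski--Vaught criterion, once $V_\kappa\preceq_{m}V$ has been established it suffices, in order to obtain $V_\kappa\preceq_{m+1}V$, to verify the following witness-reflection property: for every $\Pi_{m}$ formula $\sigma(x,y)$ and every $a\in V_\kappa$, if $V\models\exists x\,\sigma(x,a)$ then $V_\kappa\models\exists x\,\sigma(x,a)$. (The reverse implication is immediate from $V_\kappa\preceq_m V$, since a witness $c\in V_\kappa$ satisfies the $\Pi_m$ formula $\sigma(c,a)$ in $V_\kappa$, hence in $V$.) Thus the proposition reduces to establishing this property for $m=n$ and for $m=n+1$.

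I would prove both instances by the same argument, by contradiction. Fix $\sigma\in\Pi_m$ with $m\in\{n,n+1\}$ and $a\in V_\kappa$, assume $V\models\exists x\,\sigma(x,a)$ with some witness $b$, and suppose toward a contradiction that $V_\kappa\models\neg\exists x\,\sigma(x,a)$. Choose a limit ordinal $\lambda>\max(\mathrm{rank}(b),\kappa)$ and invoke $\lambda$-$C^{(n)}$-extendibility to obtain an elementary embedding $j:V_\lambda\to V_\mu$ with $\mathrm{crit}(j)=\kappa$, $j(\kappa)>\lambda$, and $j(\kappa)\in C^{(n)}$. Since $a\in V_\kappa$ and $V_\kappa\in V_\lambda$, the statement ``$V_\kappa\models\neg\exists x\,\sigma(x,a)$'' holds in $V_\lambda$; applying $j$, which fixes $a$, sends $\kappa$ to $j(\kappa)$ and $V_\kappa$ to $V_{j(\kappa)}$, and fixes the formula $\sigma$, yields that $V_\mu$, and hence $V$, satisfies ``$V_{j(\kappa)}\models\neg\exists x\,\sigma(x,a)$'' (this last transfer is absolute, as it concerns satisfaction over the set structure $V_{j(\kappa)}\in V_\mu$).

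The contradiction then comes from the witness $b$ itself. Since $\mathrm{rank}(b)<\lambda<j(\kappa)$ we have $b\in V_{j(\kappa)}$, and I claim $V_{j(\kappa)}\models\sigma(b,a)$, whence $V_{j(\kappa)}\models\exists x\,\sigma(x,a)$, contradicting the conclusion of the previous paragraph. This claim is precisely the downward absoluteness of $\Pi_m$ formulas to $\Sigma_n$-correct levels: writing $\sigma=\forall y\,\chi(y,x,a)$ with $\chi\in\Sigma_{m-1}$, for each $y\in V_{j(\kappa)}$ the true instance $V\models\chi(y,b,a)$ transfers down to $V_{j(\kappa)}\models\chi(y,b,a)$, because $V_{j(\kappa)}\preceq_n V$ and $m-1\le n$; quantifying over all $y\in V_{j(\kappa)}$ gives $V_{j(\kappa)}\models\sigma(b,a)$. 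This is the only point at which $j(\kappa)\in C^{(n)}$ enters.

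The decisive step --- and the reason the argument halts exactly at $C^{(n+2)}$ --- is this last absoluteness transfer. From $j(\kappa)\in C^{(n)}$ one obtains downward absoluteness of $\Pi_m$ formulas only up to $m=n+1$; a third round, which would reflect $\Sigma_{n+3}$ statements and thus require pushing a $\Pi_{n+2}$ formula down to $V_{j(\kappa)}$, would need $j(\kappa)\in C^{(n+1)}$, which $C^{(n)}$-extendibility does not supply. The remaining bookkeeping is routine: checking that $\lambda$ may be taken large enough --- a limit suffices, so that $V_\lambda$ and $V_\mu$ compute satisfaction over their set-sized initial segments correctly --- and assembling the two witness-reflection instances through the Tarski--Vaught criterion to conclude $V_\kappa\preceq_{n+2}V$, that is, $\kappa\in C^{(n+2)}$.
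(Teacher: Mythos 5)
Your proof is correct, and it shares its engine with the paper's: in both arguments the decisive step is to use an extendibility embedding $j:V_\lambda\to V_\mu$ with $j(\kappa)\in C^{(n)}$ to push a true $\Pi_{n+1}$ statement about a witness $b\in V_\lambda$ down into $V_{j(\kappa)}$ (downward absoluteness at a $\Sigma_n$-correct ordinal), and then to transfer the resulting existential fact from $V_{j(\kappa)}$ back to $V_\kappa$ along $j$, via absoluteness of satisfaction over set structures. The genuine difference is in how the upward direction is obtained, i.e.\ in the decomposition. The paper argues by induction on $n$: the cases $n=0,1$ are discharged by citing the theorem that every extendible cardinal belongs to $C^{(3)}$ (\cite{K}, 23.10), and in the inductive step the upward transfer of $\Sigma_{n+2}$ truth from $V_\kappa$ to $V$ comes for free because the induction hypothesis (applied to $\kappa$, which is in particular $C^{(n-1)}$-extendible) already yields $\kappa\in C^{(n+1)}$. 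You instead bootstrap at the fixed cardinal: starting from the known $\kappa\in C^{(n)}$, you run the same embedding argument twice, at levels $m=n$ and $m=n+1$, each time obtaining the upward direction from the level just established via the Tarski--Vaught criterion. What this buys is self-containment: no induction on the extendibility degree, no appeal to the external fact that extendibles are $\Sigma_3$-correct, and an explicit account of the satisfaction-transfer bookkeeping (between $V_\lambda$, $V_\mu$, and $V$) that the paper compresses into the phrase ``by elementarity.'' The cost is negligible---the embedding argument is run twice instead of once---and your closing remark correctly identifies why the method stops exactly at $C^{(n+2)}$: a third round would require $\Pi_{n+2}$ statements to descend to $V_{j(\kappa)}$, which would need $j(\kappa)\in C^{(n+1)}$, something $C^{(n)}$-extendibility does not provide.
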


\begin{proof}
By induction on $n$. Every extendible cardinal is in $C^{(3)}$ (see \cite{K}, 23.10), which takes care of the cases $n=0$ and $n=1$. Now suppose $\kappa$ is $C^{(n)}$-extendible and $\exists x \varphi (x)$ is a $\Sigma_{n+2}$ sentence, where $\varphi$ is $\Pi_{n+1}$ and has parameters in $V_{\kappa}$. If $\exists x\varphi(x)$ holds in $V_{\kappa}$, then since by the induction hypothesis $\kappa \in C^{(n+1)}$, we have that $\exists x\varphi(x)$ holds in $V$.
Now suppose $a$ is such that $\varphi(a)$ holds in $V$. Pick $\lambda >\kappa$ with $a\in V_{\lambda}$, and let $j:V_{\lambda}\to V_{\mu}$ be elementary, with critical point $\kappa$ and with $j(\kappa)>\lambda$. Then since $j(\kappa)\in C^{(n)}$, and since $\varphi(a)$ is a $\Pi_{n+1}$ sentence in the parameter $a\in V_{j(\kappa)}$, we have that $V_{j(\kappa)}\models \varphi (a)$, and therefore by elementarity, $V_{\kappa}\models \exists x\varphi (x)$.
\end{proof}

%If $\kappa$ is $C^{(n)}$-extendible, then it is extendible, and hence supercompact.

%Clearly, every extendible cardinal is $C^{(0)}$-extendible, but more is true. 

Let us observe that for any given $\alpha <\lambda$, the relation ``$\alpha$ is $\lambda$-$C^{(n)}$-extendible" is $\Sigma_{n+1}$ (for $n\geq 1$), for it holds if and only if   
$$\exists \mu \exists j(j:V_{\lambda}\to V_{\mu}\wedge j\mbox{ elementary}\; \wedge crit(j)=\alpha \wedge j(\alpha)>\lambda\wedge j(\alpha) \in C^{(n)}).$$
Hence,  $``x$ is a $C^{(n)}$-extendible cardinal" is a $\Pi_{n+2}$ property of $x$.  

\begin{proposition}
For every $n\geq 1$, if $\kappa$ is $C^{(n)}$-extendible and $\kappa +1$-$C^{(n+1)}$-extendible,  then the set of $C^{(n)}$-extendible cardinals is unbounded below $\kappa$. 
Hence,  the first $C^{(n)}$-extendible cardinal $\kappa$, if it exists, is not $\kappa +1$-$C^{(n+1)}$-extendible. In particular, the first extendible cardinal $\kappa$ is not $\kappa +1$-$C^{(2)}$-extendible.
\end{proposition}

\begin{proof}
Suppose $\kappa$ is $C^{(n)}$-extendible and  $\kappa +1$-$C^{(n+1)}$-extendible, 
witnessed by $j:V_{\kappa +1}\to V_{j(\kappa)+1}$.  Since $j(\kappa)\in C^{(n+1)}$, 
$$V_{j(\kappa)}\models ``\kappa\mbox{ is $C^{(n)}$-extendible}".$$
Hence, for every $\alpha <\kappa$,
$$V_{j(\kappa)}\models ``\exists \beta >\alpha(\beta \mbox{ is $C^{(n)}$-extendible})",$$
since this is witnessed by $\kappa$. By the elementarity of $j$, for every fixed $\alpha <\kappa$, there is $\beta >\alpha$ such that, 
$$V_{\kappa}\models ``\beta >\alpha\wedge \beta \mbox{ is $C^{(n)}$-extendible}".$$
And since, by Proposition \ref{3.5},  $\kappa \in C^{(n+2)}$, $\beta$ is $C^{(n)}$-extendible in $V$.
%(1). Suppose $\kappa$ is the first $C^{(n)}$-extendible cardinal and suppose, towards a contradiction, that it is $\lambda$-$C^{(n+1)}$-extendible, for some fixed $\lambda >\kappa$. Since, by last Proposition, $\kappa \in C^{(n+2)}$, we have $$V_{\kappa}\models \mbox{``There are no $C^{(n)}$-extendible cardinals"}.$$  Let $j:V_{\lambda}\to V_{\mu}$ witness the $\lambda$-$C^{(n+1)}$-extendibility of $\kappa$. Thus,  by elementarity,
%$$V_{j(\kappa)}\models \mbox{``There are no $C^{(n)}$-extendible cardinals"}.$$
%But since $j(\kappa)\in C^{(n+1)}$, 
%$$V_{j(\kappa)}\models ``\kappa\mbox{ is $C^{(n)}$-extendible"}.$$
\end{proof}

\begin{proposition}
For every $n$, if there exists a $C^{(n+2)}$-extendible cardinal, then there exists a proper  class of $C^{(n)}$-extendible cardinals. 
\end{proposition}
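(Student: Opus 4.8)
The plan is to show that the class of $C^{(n)}$-extendible cardinals is unbounded in the ordinals, which makes it a proper class. Fix a $C^{(n+2)}$-extendible cardinal $\kappa$ and an arbitrary ordinal $\alpha$; the goal is to produce a $C^{(n)}$-extendible cardinal above $\alpha$. First I note that, since $C^{(n+2)}\subseteq C^{(n+1)}\subseteq C^{(n)}$, any embedding witnessing $C^{(n+2)}$-extendibility of $\kappa$ also witnesses $C^{(n)}$- and $C^{(n+1)}$-extendibility; in particular $\kappa$ is $C^{(n)}$-extendible and $\kappa+1$-$C^{(n+1)}$-extendible, so the preceding Proposition gives that the $C^{(n)}$-extendible cardinals are unbounded below $\kappa$.

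Write $\Psi(x)$ for the $\Pi_{n+2}$ formula ``$x$ is $C^{(n)}$-extendible'', and let $\Phi(y)$ abbreviate $\forall \xi<y\,\exists\beta(\xi<\beta<y\wedge\Psi(\beta))$. The plan is to transport the unboundedness below $\kappa$ upward through an extendibility embedding. I would choose $\lambda\in C^{(n+2)}$ with $\lambda>\max(\alpha,\kappa)$ and let $j:V_\lambda\to V_\mu$ witness $\lambda$-$C^{(n+2)}$-extendibility, so $crit(j)=\kappa$ and $\lambda<j(\kappa)\in C^{(n+2)}$. Since $\lambda\in C^{(n+2)}$, i.e. $V_\lambda\preceq_{n+2}V$, the model $V_\lambda$ evaluates the $\Pi_{n+2}$ formula $\Psi$ correctly on ordinals below $\lambda$; hence, using that genuine $C^{(n)}$-extendible cardinals are unbounded below $\kappa<\lambda$, we get $V_\lambda\models\Phi(\kappa)$. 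Applying the elementarity of $j$ then yields $V_\mu\models\Phi(j(\kappa))$.

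The main obstacle is that I have no control over whether $\mu\in C^{(n+2)}$, so $V_\mu$ may fail to compute $C^{(n)}$-extendibility correctly and I cannot simply read genuine $C^{(n)}$-extendible cardinals off $V_\mu\models\Phi(j(\kappa))$. The remedy exploits the fact that the witnesses delivered by $\Phi(j(\kappa))$ lie \emph{strictly below} $j(\kappa)$, for which correctness can be recovered by routing through $V_{j(\kappa)}$. Indeed, ``$x\in C^{(n+2)}$'' is $\Pi_{n+2}$ and holds of $\kappa$ in $V$, so $V_\lambda\preceq_{n+2}V$ gives $V_\lambda\models``\kappa\in C^{(n+2)}"$, and elementarity of $j$ gives $V_\mu\models``j(\kappa)\in C^{(n+2)}"$, which unpacks to $V_{j(\kappa)}\preceq_{n+2}V_\mu$. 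On the other hand $j(\kappa)\in C^{(n+2)}$ in $V$ gives $V_{j(\kappa)}\preceq_{n+2}V$. Chaining these two elementarity relations shows that for every $\beta<j(\kappa)$ the $\Pi_{n+2}$ formula $\Psi(\beta)$ has the same truth value in $V_\mu$, in $V_{j(\kappa)}$, and in $V$.

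Combining the last two steps finishes the argument: the witnesses $\beta<j(\kappa)$ supplied by $V_\mu\models\Phi(j(\kappa))$ satisfy $V_\mu\models\Psi(\beta)$, hence $V\models\Psi(\beta)$, so they are genuine $C^{(n)}$-extendible cardinals, unbounded below $j(\kappa)$ in $V$. Since $\alpha<\lambda<j(\kappa)$, this produces a genuine $C^{(n)}$-extendible cardinal in the interval $(\alpha,j(\kappa))$; as $\alpha$ was arbitrary, the $C^{(n)}$-extendible cardinals are unbounded and therefore form a proper class. The only delicate point, as indicated, is the equality of the three truth values of $\Psi(\beta)$, which rests on the choice $\lambda\in C^{(n+2)}$ and on the fact that membership in $C^{(n+2)}$ is correctly evaluated in transitive set models through the $\Sigma_{n+2}$-satisfaction predicate.
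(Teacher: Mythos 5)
Your proof is correct, and while it opens exactly as the paper does, it globalizes by a genuinely different mechanism. Both arguments begin by observing that $C^{(n+2)}\subseteq C^{(n+1)}\subseteq C^{(n)}$, so $\kappa$ is both $C^{(n)}$-extendible and $\kappa+1$-$C^{(n+1)}$-extendible, whence the preceding proposition gives unboundedness of the $C^{(n)}$-extendible cardinals below $\kappa$. The paper then finishes in one shot by reflection: since $\kappa\in C^{(n+4)}$ (Proposition \ref{3.5}) and ``$x$ is $C^{(n)}$-extendible'' is $\Pi_{n+2}$, the sentence asserting that the $C^{(n)}$-extendible cardinals are unbounded is at worst $\Pi_{n+4}$, holds in $V_\kappa$ (the genuine witnesses below $\kappa$ are recognized there by downward $\Pi_{n+2}$-transfer), and therefore holds in $V$ because $V_\kappa\preceq_{n+4}V$. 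You instead, for each target ordinal $\alpha$, take a fresh witnessing embedding $j:V_\lambda\to V_\mu$ with $\lambda\in C^{(n+2)}$ above $\alpha$, push the unboundedness statement from $\kappa$ up to $j(\kappa)$ by elementarity, and recover $V$-correctness of the witnesses below $j(\kappa)$ by chaining $V_{j(\kappa)}\preceq_{n+2}V_\mu$ (obtained by unpacking $V_\mu\models$ ``$j(\kappa)\in C^{(n+2)}$'') with $V_{j(\kappa)}\preceq_{n+2}V$; this unpacking-and-chaining maneuver is precisely the one the paper itself uses in the proof of Theorem \ref{theorem4}, so it is well within its toolkit, and your closing caveat about evaluating membership in $C^{(n+2)}$ inside $V_\mu$ via the $\Sigma_{n+2}$-satisfaction predicate is the same standard fact the paper uses tacitly throughout. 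As for what each approach buys: the paper's argument is shorter, requires no further embeddings, and hides all the work in one complexity count, but it leans on the full strength of Proposition \ref{3.5} ($\kappa\in C^{(n+4)}$); your argument uses within this step only the trivial fact $\kappa\in C^{(n+2)}$, and it yields slightly more local information, namely a genuine $C^{(n)}$-extendible cardinal in the interval $(\alpha,j(\kappa))$ for every witnessing embedding chosen above $\alpha$.
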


\begin{proof}
By the last proposition, if $\kappa$ is $C^{(n+2)}$-extendible, then the set of $C^{(n)}$-extendible cardinals is unbounded below $\kappa$. Now the proposition follows easily from the fact that if $\kappa$ is $C^{(n+2)}$-extendible, then $\kappa \in C^{(n+4)}$ (Proposition \ref{3.5}), and the fact that being $C^{(n)}$-extendible is a $\Pi_{n+2}$-property.
\end{proof}

%Similarly as in \cite{K} 26.11, one can prove 

Note however that the existence of a $C^{(n+1)}$-extendible cardinal $\kappa$  does not imply the existence of a $C^{(n)}$-extendible cardinal greater than $\kappa$.  For if $\lambda$ is the least such $C^{(n)}$-cardinal, then $V_\lambda$ is a model of ZFC plus ``$\kappa$ is $C^{(n+1)}$-extendible",  because $\lambda \in C^{(n+2)}$ (Proposition \ref{3.5}) and being $C^{(n+1)}$-extendible  is a $\Pi_{n+3}$ property of $\kappa$. And $V_\lambda$ also satisfies that ``there is no $C^{(n)}$-extendible cardinal greater than $\kappa$", because any such $C^{(n)}$-extendible cardinal would be $C^{(n)}$-extendible in $V$, since $\kappa\in C^{(n+2)}$. 

The next proposition gives an upper bound on $C^{(n)}$-superstrong cardinals.

\begin{proposition}
\label{extendibleimpliessuperstrong}
If $\kappa$ is $\kappa +1$-$C^{(n)}$-extendible, then $\kappa$ is $C^{(n)}$-superstrong, and there is a $\kappa$-complete normal ultrafilter $\mathcal{U}$ over $\kappa$ such that the set of $C^{(n)}$-superstrong cardinals smaller than $\kappa$ belongs to $\mathcal{U}$.
\end{proposition}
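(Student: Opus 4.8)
The plan is to follow the template of \cite{K}, Proposition 26.11(a), keeping track of the $C^{(n)}$ condition throughout. Fix a witness $j:V_{\kappa+1}\to V_{j(\kappa)+1}$ to the $\kappa+1$-$C^{(n)}$-extendibility of $\kappa$, so that $crit(j)=\kappa$, $j(\kappa)>\kappa+1$, and $j(\kappa)\in C^{(n)}$. First I would record two easy consequences. Since $j\restriction V_\kappa=\identity$ and satisfaction in $V_\kappa$ is definable in $V_{\kappa+1}$, elementarity of $j$ yields $V_\kappa\preceq V_{j(\kappa)}$, whence $\kappa\in C^{(n)}$ because $j(\kappa)\in C^{(n)}$. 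Moreover $j(\kappa)$ is inaccessible: it is a strong limit as a member of $C^{(1)}$, and it is regular by elementarity since $\kappa$ is.

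Next I would derive from $j$ the $(\kappa,j(\kappa))$-extender $E$ given by $E_a=\{X\subseteq[\kappa]^{|a|}:a\in j(X)\}$ for $a\in[j(\kappa)]^{<\omega}$, and let $j_E:V\to M_E$ be the resulting embedding. As in \cite{K}, 26.11(a), one checks that $crit(j_E)=\kappa$, that $j_E(\kappa)=j(\kappa)$, that $j_E\restriction V_{\kappa+1}=j\restriction V_{\kappa+1}$, and that $V_{j(\kappa)}\subseteq M_E$. Since $j_E(\kappa)=j(\kappa)\in C^{(n)}$, the embedding $j_E$ immediately witnesses that $\kappa$ is $C^{(n)}$-superstrong, which gives the first assertion. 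For the ultrafilter assertion, put $\mathcal{U}=\{X\subseteq\kappa:\kappa\in j(X)\}=\{X\subseteq\kappa:\kappa\in j_E(X)\}$, the standard $\kappa$-complete normal measure derived from $j_E$. By the usual derived-measure computation (\cite{K}, 5.14, 22.1), the set $S=\{\alpha<\kappa:\alpha\text{ is }C^{(n)}\text{-superstrong}\}$ belongs to $\mathcal{U}$ if and only if $\kappa\in j_E(S)$, that is, if and only if $M_E\models$``$\kappa$ is $C^{(n)}$-superstrong''. So everything reduces to showing $M_E\models$``$\kappa$ is $C^{(n)}$-superstrong''.

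I would obtain this in two moves. First, since $\kappa\in C^{(n)}$ and $j_E$ is elementary, $M_E\models j_E(\kappa)\in C^{(n)}$, exactly as in the proofs of Propositions \ref{prop3} and \ref{supercompactimpliessuperstrong}. Second, I would show that $V_{j(\kappa)}\models$``$\kappa$ is $C^{(n)}$-superstrong''; granting this, since $V_{j(\kappa)}=(V_{j_E(\kappa)})^{M_E}$ and ``$\kappa$ is $C^{(n)}$-superstrong'' is $\Sigma_{n+1}$, the fact that $M_E\models j_E(\kappa)\in C^{(n)}$ (so that $(V_{j_E(\kappa)})^{M_E}\preceq_n M_E$ holds inside $M_E$) lets me lift the statement from $V_{j(\kappa)}$ to $M_E$ by the basic $\Sigma_{n+1}$-correctness fact recorded in the Introduction.

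The main obstacle is precisely the local reflection $V_{j(\kappa)}\models$``$\kappa$ is $C^{(n)}$-superstrong'', since the natural witness, the extender $E$ itself, has rank just above $j(\kappa)$ and so does not live in $V_{j(\kappa)}$, while a naive $\Sigma_{n+1}$-reflection would require $j(\kappa)\in C^{(n+1)}$, which is not available. To overcome this I would use the truncated extenders $E\restriction\xi$ for $\xi<j(\kappa)$: each lies in $V_{j(\kappa)}$, because $j(\kappa)$ is inaccessible, and for a club of $\xi<j(\kappa)$ one has $j_{E\restriction\xi}(\kappa)=\xi$ and $V_\xi\subseteq M_{E\restriction\xi}$, so that $E\restriction\xi$ witnesses the plain superstrongness of $\kappa$ with target $\xi$. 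Since $j(\kappa)\in C^{(n)}$ forces $C^{(n)}$ to be unbounded in $j(\kappa)$ and to be computed correctly by $V_{j(\kappa)}$, I can choose such a closure point $\xi\in C^{(n)}$ together with some $\mu\in C^{(n)}$ with $\xi<\mu<j(\kappa)$ and $E\restriction\xi\in V_\mu$; then $\xi,\mu,E\restriction\xi$ verify inside $V_{j(\kappa)}$ the $\Sigma_{n+1}$ clause defining ``$\kappa$ is $C^{(n)}$-superstrong'', the relevant facts about $M_{E\restriction\xi}$ and $V_\xi$ being absolute between $V_\mu$, $V_{j(\kappa)}$ and $V$. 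Establishing the club of closure points $\xi$ with $j_{E\restriction\xi}(\kappa)=\xi$ and $V_\xi\subseteq M_{E\restriction\xi}$ is the technical heart of the argument, and is where I expect the real work to lie.
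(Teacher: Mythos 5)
Your proposal is correct, and it follows the same route as the paper, whose entire proof is the citation ``As in \cite{K}, Proposition 26.11(a)'': derive the $(\kappa,j(\kappa))$-extender $E$ from $j$, observe that $j_E(\kappa)=j(\kappa)\in C^{(n)}$ and $V_{j(\kappa)}\subseteq M_E$ (which gives the first assertion), and obtain the normal-measure assertion by showing that $M_E$ itself thinks $\kappa$ is $C^{(n)}$-superstrong. What you add is exactly the $C^{(n)}$ bookkeeping that Kanamori's argument does not address, and you locate the crux correctly: with only $j(\kappa)\in C^{(n)}$ available (rather than $C^{(n+1)}$, as in Proposition \ref{prop3}) one cannot reflect the $\Sigma_{n+1}$ statement into $V_{j(\kappa)}$ for free, and the paper's $\Sigma_{n+1}$ formulation of $C^{(n)}$-superstrongness requires a witnessing extender lying inside some $V_\mu$ with $\mu\in C^{(n)}$, which forces any witness usable in $V_{j(\kappa)}$ to have target strictly below $j(\kappa)$; hence your truncation device is the natural one given the paper's definitions. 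The step you flag as the technical heart is a true and standard hull computation, so it is not a gap: $M_{E\restriction\xi}$ is the transitive collapse of $H_\xi=\{j_E(f)(a):a\in[\xi]^{<\omega},\ f\in V\}\prec M_E$, with $j_{E\restriction\xi}(\kappa)=\mathrm{ot}\bigl(H_\xi\cap j(\kappa)\bigr)$; since $j(\kappa)$ is inaccessible, the two functions $\alpha\mapsto\sup\bigl(H_{\alpha+1}\cap j(\kappa)\bigr)$ and $\alpha\mapsto{}$the least $\eta$ with $V_\alpha\subseteq H_\eta$ (well defined because $V_{j(\kappa)}\subseteq M_E=H_{j(\kappa)}$) map $j(\kappa)$ into $j(\kappa)$, and every limit $\xi>\kappa$ closed under both satisfies $H_\xi\cap j(\kappa)=\xi$ and $V_\xi\subseteq H_\xi$, i.e., $j_{E\restriction\xi}(\kappa)=\xi$ and $V_\xi\subseteq M_{E\restriction\xi}$; these closure points form a club, and $C^{(n)}\cap j(\kappa)$ contains a club as well, since it equals $\{\xi<j(\kappa):V_\xi\preceq_n V_{j(\kappa)}\}$ by $V_{j(\kappa)}\preceq_n V$. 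Modulo writing out these routine verifications, your argument is complete.
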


\begin{proof}
As in \cite{K}, Proposition 26.11 (a).
\end{proof}

\section{Vop\v{e}nka's Principle}

%We say that  two structures have the same \emph{type} if they are both $\Sigma$-structures, for some signature $\Sigma$.

This section builds on results from \cite{BCMR}, giving new and sharper characterizations of Vop\v{e}nka's Principle in terms of $C^{(n)}$-extendible cardinals.

Recall that \emph{Vop\v{e}nka's Principle (VP)}   states that for every proper class $\Ce$ of structures of the same type, there exist $A\ne B$ in $\Ce$ such that $A$ is elementarily embeddable into $B$.

VP can be formulated in the first-order language of set theory as an axiom schema, i.e., as an infinite set of axioms, one for each formula with two free variables. Formally, for each such formula $\varphi (x,y)$ one has the axiom:
$$\forall x[( \forall y\forall z(\varphi (x,y)\wedge \varphi(x,z)\to y\mbox{ and }z\mbox{ are structures of the same type})\wedge$$ $$ \forall \alpha \in OR\; \exists y (rank(y)>\alpha \wedge \varphi(x,y))\to$$
$$\exists y\exists z(\varphi(x,y)\wedge \varphi(x,z)\wedge y\ne z \wedge \exists e(e:y\to z \mbox{ is elementary}))].$$
Henceforth, VP will be understood as this axiom schema.

The theory ZFC plus VP  implies, for instance, that the class of extendible cardinals is stationary, i.e., every definable club proper class contains an extendible cardinal (\cite{M}). And its consistency is known to follow from the consistency of ZFC plus the existence of an almost-huge cardinal (see \cite{K}, or \cite{J2}). We will give below the exact equivalence in terms of $C^{(n)}$-cardinals.

\medskip

Let us consider the following variants of VP, the first one apparently much stronger than the second.

We say that a class $\Ce$ is $\mathbf{\Sigma_n}$ ($\mathbf{\Pi_n}$) if it is definable, with parameters, by a $\Sigma_n$ ($\Pi_n$) formula of the language of set theory. If no parameters are involved, then we use the  lightface types $\Sigma_n$ ($\Pi_n$).

\begin{definition}
\label{variantsVP}
If $\Gamma$ is one of $\mathbf{\Sigma_n}$, $\mathbf{\Pi_n}$,  some  $n\in \omega$, and $\kappa$ is an infinite cardinal, then we write  $VP(\kappa, \Gamma )$ for the following assertion: 

\emph{For every $\Gamma$ proper class $\Ce$ of structures of the same type $\tau$ such that both $\tau$ and the parameters of some $\Gamma$-definition of $\Ce$, if any, belong to $H_\kappa$, $\Ce$ \emph{reflects below $\kappa$}, i.e., 
for every $B\in \Ce$, there exists $A\in \Ce \cap H_\kappa$ that is elementarily embeddable into $B$.}

If $\Gamma$ is one of $\mathbf{\Sigma_n}$, $\mathbf{\Pi_n}$, or $\Sigma_n$, $\Pi_n$, some  $n\in \omega$, we write $VP(\Gamma)$ for the following statement: 

\emph{For every $\Gamma$ proper class $\Ce$ of structures of the language of set theory with one (equivalently, finitely-many) additional $1$-ary relation symbol(s),  there exist distinct $A$ and $B$ in $\Ce$ with an elementary embedding of $A$ into $B$.}
\end{definition}
%Similarly for $\Sigma_n$, i.e., $VP(\Sigma_n)$  iff for every $\Sigma_n$ proper class $\Ce$ of structures of the same type there exist distinct $A, B\in \Ce$ and an elementary embedding of $A$ into $B$.

VP for $\mathbf{\Sigma_1}$ classes is a consequence of ZFC. In fact, the following holds.

\begin{theorem}
\label{vpsigma1}
If $\kappa$ is an uncountable cardinal, then every (not necessarily proper) class $\Ce$ of structures of the same type $\tau \in H_\kappa$ which is $\Sigma_1$ definable, with parameters in $H_\kappa$, reflects below $\kappa$. Hence, $VP(\kappa ,\mathbf{\Sigma_1})$ holds for every uncountable cardinal $\kappa$.
\end{theorem}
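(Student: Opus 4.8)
The plan is to prove the stronger assertion---that \emph{every} $\Sigma_1$-definable class $\Ce$ of structures of type $\tau\in H_\kappa$, with parameters in $H_\kappa$, reflects below an uncountable cardinal $\kappa$---from which $VP(\kappa,\mathbf{\Sigma_1})$ follows immediately as the special case of proper classes. The key structural fact I would exploit is that $H_\kappa\preceq_1 V$ for every uncountable cardinal $\kappa$: the same argument recorded in the introduction for $\alpha\in C^{(1)}$ shows that $H_\kappa$ is a $\Sigma_1$-elementary substructure of $V$, since $\Sigma_1$ statements are upward absolute and, conversely, witnesses to existential statements over parameters in $H_\kappa$ can be found inside $H_\kappa$ by the L\"owenheim--Skolem/condensation properties of $H_\kappa$. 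This is the engine that makes reflection automatic.

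First I would fix the $\Sigma_1$ formula $\psi(x,p)$ defining $\Ce$, where $p\in H_\kappa$ is the parameter and $\tau\in H_\kappa$ is the common type, and fix an arbitrary $B\in\Ce$, so that $\psi(B,p)$ holds in $V$. The goal is to produce $A\in\Ce\cap H_\kappa$ with an elementary embedding $A\to B$. Second, I would observe that the statement ``there exists a structure $x$ of type $\tau$ such that $\psi(x,p)$ and $x$ is elementarily embeddable into $B$'' is itself $\Sigma_1$ in the parameters $\tau,p,B$ once we code the embedding appropriately---being elementarily embeddable into $B$ is an existential assertion (there exists a map that preserves the atomic diagram), and $\psi$ is $\Sigma_1$, so the conjunction remains $\Sigma_1$. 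Third, I would apply downward $\Sigma_1$-reflection: the witness $B$ itself shows this $\Sigma_1$ statement holds in $V$, but to land the witness $A$ in $H_\kappa$ I want the parameters to lie in $H_\kappa$ as well. The subtlety is that $B$ need not belong to $H_\kappa$.

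The hard part will be handling the parameter $B$, which is the only object in the relevant $\Sigma_1$ statement that may have size $\geq\kappa$. The resolution is to apply the downward L\"owenheim--Skolem theorem directly to $B$: take an elementary substructure $A\preceq B$ of cardinality less than $\kappa$ that contains (interpretations of) the symbols of $\tau$, so that $A\in H_\kappa$ and the inclusion $A\hookrightarrow B$ is elementary. It then remains only to verify $A\in\Ce$, i.e.\ that $\psi(A,p)$ holds. Here I would invoke that $\psi$ is $\Sigma_1$ and that the property ``being a structure of type $\tau$ satisfying $\psi(\cdot,p)$'' is preserved under the relevant elementary/substructure relationship---more precisely, since $A\preceq B$ and $\psi$ asserts a first-order axiomatizable (or $\Sigma_1$-absolute) property of the structure, $\psi(A,p)$ follows. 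This last verification is the genuine obstacle, and the cleanest route is to note that for a $\Sigma_1$-definable class closed under the natural operations, the existence of the elementary embedding $A\to B$ together with $\Sigma_1$-absoluteness between $H_\kappa$ and $V$ places $A$ in $\Ce$; concretely, $H_\kappa\preceq_1 V$ guarantees that whatever $\Sigma_1$ witnesses make $\psi(A,p)$ true in $V$ are reflected into $H_\kappa$, and since $A\in H_\kappa$ and $p\in H_\kappa$, we get $A\in\Ce\cap H_\kappa$ as required.
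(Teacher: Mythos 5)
Your plan breaks down at exactly the step you flag as ``the genuine obstacle'': verifying that the L\"owenheim--Skolem substructure $A\preceq B$ satisfies $\psi(A,p)$. This cannot be repaired, because a $\Sigma_1$-definable class of structures need not be closed under elementary substructures (nor even under isomorphism): membership in $\Ce$ is a property of the set-theoretic presentation of a structure, not of its elementary type. Concretely, let $\Ce$ be the class of structures $\langle x,\in\restriction x\rangle$ with $x$ transitive; this is $\Delta_0$-definable, hence $\Sigma_1$ with no parameters. Take $B=\langle H_{\omega_2},\in\rangle$. Any elementary substructure $A\preceq B$ of size $<\kappa\leq\omega_2$ contains $\omega_1$ as an element but not as a subset, so its universe is not transitive and $A\notin\Ce$; yet the theorem is true for this $\Ce$. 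Your closing appeal to $H_\kappa\preceq_1 V$ is also circular: that fact transfers $\psi(A,p)$ \emph{downward} from $V$ to $H_\kappa$ only after you know $\psi(A,p)$ holds in $V$, which is precisely what is to be proved. (A secondary, fixable issue: $|A|<\kappa$ does not by itself give $A\in H_\kappa$, since the elements of the universe of $A\subseteq B$ may have large rank.)

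The paper's proof differs in exactly the way needed to close this gap. Instead of taking an elementary substructure of $B$, it takes a regular $\lambda>\kappa$ with $B\in H_\lambda$ and an elementary substructure $N\preceq H_\lambda$ of size $<\kappa$ containing $B$, the transitive closure of $\{\tau\}$, and the parameters; it then forms the transitive collapse $\pi\colon N\to M$ and sets $A=\pi(B)$. The inverse collapse $j=\pi^{-1}$, restricted to $A$, is an elementary embedding of $A$ into $B$, and $A\in M\in H_\kappa$. The crucial point is that this construction supplies a \emph{transitive} set model $M$ that believes $A\in\Ce$: since $H_\lambda\preceq_1 V$ we have $H_\lambda\models\psi(B,p)$, hence $N\models\psi(B,p)$, hence $M\models\psi(A,p)$ (the collapse fixes $\tau$ and $p$), and then upward absoluteness of $\Sigma_1$ formulas for transitive models yields $\psi(A,p)$ in $V$. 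In short, the collapse simultaneously moves a copy of $B$ into $H_\kappa$ and certifies its membership in $\Ce$ via $\Sigma_1$-upward-absoluteness from a transitive model --- the ingredient your substructure approach has no way to produce.
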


\begin{proof}
Fix an uncountable cardinal $\kappa$ and a class $\mathcal{C}$ of structures of the same type $\tau\in H_\kappa$, definable by a $\Sigma_1$ formula with parameters in $H_\kappa$. 

Given $B\in \mathcal{C}$, let $\lambda$ be a regular cardinal greater than $\kappa$, with $B\in H_\lambda$, and let $N$ be an elementary substructure of $H_\lambda$, of cardinality less than $\kappa$, which contains $B$ and the transitive closure of $\{ \tau \}$ together with the parameters involved in some $\Sigma_1$ definition of $\mathcal{C}$.

Let $A$ and $M$ be the transitive collapses of $B$ and $N$, respectively, and let $j:M\to N$ be the collapsing isomorphism. Then $A\in H_\kappa$,  and $j\restriction A:A\to B$ is an elementary embedding. Observe that $j(\tau)=\tau$. So,  since $\Sigma_1$ formulas are upwards absolute for transitive models, and since $M\models A\in \mathcal{C}$, we have that $A\in\mathcal{C}$.
\end{proof}

%Suppose that $\kappa <\lambda$ and $\lambda\in C^{(1)}$. Then if $\varphi(x,y)$ and $a\in V_{\kappa}$ are such that for every ordinal $\alpha$ there exists $\beta >\alpha$ and $b$ of rank $\beta$ such that $\varphi(b,a)$ -- i.e., $\varphi (x,y)$ defines a proper class, with parameter $a$ --, then 

In contrast, Vop\v{e}nka's Principle for $\Pi_1$ proper classes implies the existence of very large cardinals.

\begin{theorem}
\label{vppi1}
$ $
\begin{enumerate}
\item 
If $VP(\Pi_1)$ holds, then there exists a supercompact cardinal.
\item 
If $VP(\mathbf{\Pi_1})$ holds, then there is a proper class of supercompact cardinals.
\end{enumerate}
\end{theorem}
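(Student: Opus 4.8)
The plan is to prove both parts by extracting a supercompactness embedding from the elementary embedding guaranteed by $VP$ applied to a carefully chosen proper class of structures. The guiding idea is that a supercompact cardinal is characterized by the existence, for every $\lambda$, of a $\kappa$-complete fine normal ultrafilter on $\mathcal{P}_\kappa(\lambda)$, equivalently by elementary embeddings $j:V\to M$ with $\operatorname{crit}(j)=\kappa$, $j(\kappa)>\lambda$, and $M$ closed under $\lambda$-sequences. So I want to build a class of structures for which an elementary embedding between two members codes, or can be massaged into, exactly such data.

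For part (1), the first step is to define, for each ordinal $\alpha$, a structure $A_\alpha$ of the form $\langle V_\alpha, \in, \{\beta\}\rangle_{\beta<\alpha}$ or, more usefully, a structure on $V_\alpha$ equipped with an additional unary predicate marking a distinguished element, so that the whole family $\{A_\alpha : \alpha \in \mathrm{OR}\}$ forms a $\Pi_1$-definable proper class of structures in a language of set theory with one extra $1$-ary relation symbol (exactly the framework of $VP(\Pi_1)$). The point of using $V_\alpha$ as the underlying set is that an elementary embedding $e:A_\alpha \to A_\beta$ between two such structures, for $\alpha<\beta$, should be forced to be an embedding of the form $V_\alpha \to V_\beta$ fixing an initial segment and moving the marked ordinal upward, i.e. essentially a restricted extendibility-type embedding with a critical point. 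Applying $VP(\Pi_1)$ produces distinct $A_\alpha, A_\beta$ and an elementary $e:A_\alpha\to A_\beta$; the critical point $\kappa$ of $e$ is then a candidate supercompact cardinal. I would then need to show that $\kappa$ is in fact supercompact, most cleanly by invoking Magidor's Lemma (Lemma \ref{Magidor}): if the class is arranged so that $\alpha$ is a limit ordinal and $e:V_\alpha \to V_\beta$ has critical point $\kappa$ with $\alpha$ arbitrarily large, then $\kappa$ is $<\alpha$-supercompact, and pushing $\alpha$ up across all ordinals yields full supercompactness of $\kappa$. The delicate point is to encode into the $\Pi_1$ definition of the class enough rigidity — for instance by attaching to each $A_\alpha$ a predicate recording a closed unbounded set of indices, or by indexing only over limit ordinals $\alpha$ in $C^{(1)}$ — so that the embedding $VP$ hands back genuinely has a critical point below $\alpha$ and is not merely an isomorphism or an embedding with critical point $\geq \alpha$.

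The main obstacle, and the step I expect to require the most care, is precisely guaranteeing that the embedding produced by $VP(\Pi_1)$ is nontrivial with critical point below the relevant level and of the right shape to feed into Magidor's Lemma; a naive choice of structures can be defeated by trivial or too-high-critical-point embeddings, so the coding of each $A_\alpha$ must pin down $\alpha$ (say, as the ordinal height, recoverable in a $\Pi_1$ way via the predicate) while still leaving room for an internal critical point $\kappa<\alpha$. Verifying that the defining formula is genuinely $\Pi_1$ — using that ``$x=V_y$'' is $\Pi_1$ and that being a structure of the prescribed type is simple — is routine but must be checked.

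For part (2), the strategy is to relativize the argument to obtain a \emph{proper class} of supercompacts from the parametrized principle $VP(\mathbf{\Pi_1})$. The idea is that for each ordinal $\gamma$ one applies $VP(\mathbf{\Pi_1})$ to a variant of the above class in which the structures $A_\alpha$ are indexed only by $\alpha>\gamma$, using $\gamma$ as a parameter (this is where boldface $\mathbf{\Pi_1}$, allowing a parameter, is essential); the resulting supercompact cardinal $\kappa$ then exceeds $\gamma$. Since $\gamma$ is arbitrary, this produces supercompact cardinals above every ordinal, hence a proper class of them. The only real subtlety beyond part (1) is checking that restricting the index set above a parameter $\gamma$ keeps the class $\mathbf{\Pi_1}$ and still proper, and that the extracted $\kappa$ is provably $>\gamma$ rather than merely $\geq$ some smaller value; arranging $\kappa>\gamma$ follows because all members of the tailored class have rank above $\gamma$, so the critical point of any embedding between them, being an ordinal moved by the embedding, is likewise forced above $\gamma$ once the coding fixes the bottom of each structure.
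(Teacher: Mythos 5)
Your proposal assembles the right ingredients---rank-initial-segment structures, Kunen's theorem to force a nontrivial critical point, and Magidor's Lemma \ref{Magidor}---but it has a genuine gap at the decisive step: ``pushing $\alpha$ up across all ordinals yields full supercompactness of $\kappa$.'' A single application of $VP(\Pi_1)$ yields exactly one pair of distinct structures and one elementary embedding $e:V_\alpha\to V_\beta$, hence one critical point $\kappa$, which Magidor's lemma makes $<\alpha$-supercompact for that single $\alpha$. Your plan has no mechanism for keeping $\kappa$ fixed while letting $\alpha$ grow: re-applying VP (even to tail classes indexed above some ordinal) produces embeddings whose critical points may be entirely different each time, and in the lightface case you cannot even use $\kappa$ as a parameter to force later embeddings to have critical point exactly $\kappa$. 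So no single cardinal is ever shown to be $<\alpha$-supercompact for all $\alpha$, and the argument cannot conclude.

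The paper closes exactly this gap with a least-counterexample coding and a proof by contradiction. Assuming there is \emph{no} supercompact cardinal, it forms the class $\mathcal{C}$ of structures $\langle V_{\lambda+2},\in,\alpha,\lambda\rangle$ where $\lambda$ is the \emph{least} limit ordinal greater than $\alpha$ such that no $\kappa\leq\alpha$ is $<\lambda$-supercompact; under the assumption this is a $\Pi_1$ proper class. Given an elementary $j$ between two distinct members, Kunen's theorem forces $\lambda<\mu$, and the minimality clause in the definition of $\lambda$ and $\mu$ then forces $\alpha<\beta$, so $crit(j)=\kappa\leq\alpha$; Magidor's lemma makes $\kappa$ $<\lambda$-supercompact, directly contradicting the defining property of $\langle V_{\lambda+2},\in,\alpha,\lambda\rangle$. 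Thus one application of VP suffices---not to exhibit a supercompact cardinal directly, but to refute its nonexistence; the supercompactness failure is encoded into the structures themselves, which is the idea missing from your outline. Your part (2) relativization (adding constants for the ordinals $\leq\xi$ so the embedding fixes them and its critical point exceeds $\xi$) does match the paper's, but as written it inherits the same gap from part (1).
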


\begin{proof}
(1). Let $\mathcal{C}$ be the class of structures of the form $\langle V_{\lambda +2 }, \in ,   \alpha , \lambda  \rangle$, where 
$\lambda$ is the least limit ordinal greater than $\alpha$ such that no $\kappa \leq \alpha$ is $<\lambda$-supercompact.

We claim that $\mathcal{C}$ is $\Pi_1$ definable without parameters. For $X\in \mathcal{C}$ if and only if $X=\langle X_0,X_1,X_2,X_3 \rangle$, where

 \begin{enumerate}
 \item[(1)] $X_2$ is an ordinal
 \item[(2)] $X_3$ is a limit ordinal greater than $X_2$
 \item[(3)] $X_0=V_{X_3 +2}$
 \item[(4)] $X_1 =\in \restriction X_0$
 \item[(5)] And the following hold in $\langle X_0, X_1 \rangle$:
 \begin{enumerate}
 \item $\forall \kappa \leq X_2 (\kappa \mbox{ is not } <X_3\mbox{-supercompact})$
 \item $\forall \mu (\mu \mbox{ limit}\wedge X_2 <\mu <X_3 \to \exists \kappa \leq X_2 (\kappa \mbox{ is} <\mu\mbox{-supercompact}))$.
 \end{enumerate}
   \end{enumerate}
   
%    \item[(1)] $Z=\langle \alpha ,\lambda\rangle$, and $\alpha <\lambda$.
% \item[(2)] $Y=V_{\lambda +2}$
% \item[(3)] $W=\in \restriction Y$
% \item[(4)] And the following holds in $\langle Y,W\rangle$:
% \begin{enumerate}
% \item $\forall \kappa \leq \alpha (\kappa \mbox{ is not } <\lambda\mbox{-supercompact})$
% \item $\forall \lambda' (\lambda' \mbox{ limit }\wedge \alpha <\lambda' <\lambda \to \exists \kappa \leq \alpha (\kappa \mbox{ is } <\lambda'\mbox{-supercompact}))$.
 
If there is no supercompact cardinal, then $\mathcal{C}$ is a proper class. So by $VP(\Pi_1)$, there exist  structures $\langle V_{\lambda+2 }, \in,   \alpha ,\lambda  \rangle \ne \langle V_{\mu+2 },\in,   \beta ,\mu  \rangle$ in $\Ce$ and an elementary embedding $$j:\langle V_{\lambda+2 }, \in,   \alpha,\lambda \rangle \to \langle V_{\mu+2 },\in,   \beta ,\mu  \rangle.$$ 
Since  $j$ must send $\alpha$ to $\beta$ and $\lambda$ to $\mu$, $j$ is not the identity, for otherwise the two structures would be equal. Hence by Kunen's Theorem  (\cite{Ku}; see also \cite{K}, 23.14) we must have $\lambda <\mu$. By the way $\lambda$ and $\mu$ are uniquely defined from $\alpha$ and $\beta$, respectively, this implies that while no $\kappa \leq \alpha$ is $<\lambda$-supercompact, there is some $\kappa \leq \beta$ which is $<\lambda$-supercompact, and therefore  $\alpha <\beta$. So, $j$ has  critical point some $\kappa \leq\alpha$.  It now follows by Lemma \ref{Magidor} that $\kappa$ is $<\lambda$-supercompact. But this is impossible because $\langle V_{\lambda +2 }, \in ,   \alpha , \lambda  \rangle \in \mathcal{C}.$

(2). 
Fixing an ordinal $\xi$, to show that there is a supercompact cardinal greater than $\xi$, we argue as above. The only difficulty now is to ensure that $\kappa >\xi$. But this can be achieved by  letting $\mathcal{C}$ be the class of structures of the form $\langle V_{\lambda +2 }, \in ,   \alpha , \lambda, \{ \gamma \}_{\gamma \leq \xi}  \rangle$, where $\alpha >\xi$ and 
$\lambda$ is the least limit ordinal greater than $\alpha$ such that no $\kappa \leq \alpha$ is $<\lambda$-supercompact. The class $\mathcal{C}$ is now $\Pi_1$ definable with $\xi$ as an additional parameter. If there is no supercompact cardinal above $\xi$, then $\Ce$ is a proper class. So arguing as before we have an elementary embedding $j$ between two different structures in $\Ce$, which now must be the identity on the ordinals less than or equal to $\xi$, so that $j$ has  critical point some $\kappa$ with $\xi <\kappa$. A contradiction then follows as before.
\end{proof}

For any given $\Pi_1$ class of structures $\mathcal{C}$ of the same type one may wonder how much supercompactness is needed to guarantee that VP holds for $\Ce$. An upper bound is given in the next Proposition.

Let us say that a limit ordinal $\lambda$ \emph{captures} a proper class $\mathcal{C}$ if the class of ordinal ranks of elements of $\mathcal{C}$, intersected with $\lambda$, is unbounded in $\lambda$. I.e., 
for every $\alpha$ less than $\lambda$ there exists $A\in \Ce$ of rank strictly between $\alpha$ and  $\lambda$.

Note that if $\mathcal{C}$ is $\mathbf{\Pi_n}$, then every $\lambda$ in $C^{(n+1)}$ greater than the rank of the parameters involved in a $\Pi_n$ definition of $\mathcal{C}$ captures $\mathcal{C}$. For if $\alpha <\lambda$, then the assertion that there is a structure in $\Ce$ of rank greater than $\alpha$ can be written as a $\Sigma_{n+1}$ sentence with parameter $\alpha$ and the parameters of some definition of $\Ce$. And since this sentence is true in $V$, and $\lambda \in C^{(n+1)}$, it is also true in $V_\lambda$.
Notice also that, by a similar argument, every cardinal in $C^{(2)}$ belongs to the $\Pi_1$ definable class $Lim(C^{(1)})$ of all limit points of $C^{(1)}$ and, moreover, it captures all $\Pi_1$ proper classes. 
However,  the least ordinal $\lambda$ in $Lim(C^{(1)})$ that captures all $\Pi_1$ proper classes is strictly less than the least ordinal $\mu$ in $C^{(2)}$.
The point is that, fixing an enumeration $\langle \varphi_n(x):n<\omega\rangle$ of all $\Pi_1$ formulas that define proper classes,  the sentence
$$\exists \lambda \exists x(\lambda\in Lim(C^{(1)})\wedge x=V_{�\lambda}\, \wedge$$
$$ \forall n (V_{\lambda}\models \forall \alpha \exists \beta >\alpha \exists a(rk(a)>\beta \wedge \models_1 \varphi_n(a))))$$
is $\Sigma_2$ in the parameter $\langle \varphi_n(x):n<\omega\rangle$, and so it is reflected by $\mu$, thereby producing a  $\lambda < \mu$ in $Lim(C^{(1)})$ that captures all $\Pi_1$ proper classes.

\begin{proposition}
\label{scimpliesvp}
Let $\mathcal{C}$ be a $\Pi_1$ proper class of structures of the same type. If there exists a cardinal $\kappa$ that is $<\lambda$-supercompact, for some $\lambda\in Lim(C^{(1)})$   greater than $\kappa$ that captures $\mathcal{C}$, then  VP holds for $\mathcal{C}$.
\end{proposition}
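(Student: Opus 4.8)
The plan is to exploit a single supercompactness embedding with critical point $\kappa$ to manufacture, inside the target model $M$, a pair of distinct members of $\mathcal{C}$ joined by an elementary embedding, and then to reflect this configuration back to $V$ by the elementarity of the embedding. Write $\varphi$ for a $\Pi_1$ formula without parameters defining $\mathcal{C}$, and note that, since the common type $\tau$ of the structures in $\mathcal{C}$ is a fixed small object, we may assume $\tau$ has rank below $\kappa$, so that it is fixed by every embedding with critical point $\kappa$.

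First I would use that $\lambda$ captures $\mathcal{C}$: applied to $\alpha=\kappa$, this produces a structure $A\in\mathcal{C}$ of rank $\alpha_0$ with $\kappa<\alpha_0<\lambda$. Since $\lambda\in Lim(C^{(1)})$ is in particular a strong limit, $\theta:=|V_{\alpha_0}|<\lambda$, so the hypothesis that $\kappa$ is $<\lambda$-supercompact supplies an elementary embedding $j:V\to M$ with $crit(j)=\kappa$, $j(\kappa)>\theta$, and $M$ closed under $\theta$-sequences. The closure yields $V_{\alpha_0+1}\subseteq M$, hence $A\in M$, and by the standard computation it also yields $j\restriction V_{\alpha_0}\in M$, whence $j\restriction A\in M$.

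Next I would verify that $M$ sees two distinct members of $\mathcal{C}$ with an elementary embedding between them. Since $V\models\varphi(A)$ and $\varphi$ is $\Pi_1$, downward absoluteness to the transitive model $M$ gives $M\models\varphi(A)$; elementarity of $j$ gives $M\models\varphi(j(A))$. The two structures are distinct because their ranks are $\alpha_0$ and $j(\alpha_0)$, and $\alpha_0>\kappa$ forces $j(\alpha_0)\geq j(\kappa)+1>\alpha_0$. Moreover $j\restriction A:A\to j(A)$ is an elementary embedding of $\tau$-structures, immediate from the elementarity of $j$ together with the absoluteness of the satisfaction relation for set-sized structures, and it lies in $M$. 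Thus, letting $\Phi$ abbreviate the parameter-free first-order sentence asserting that VP holds for $\mathcal{C}$, namely that there exist distinct $X,Y$ with $\varphi(X)\wedge\varphi(Y)$ admitting an elementary embedding of $X$ into $Y$, we obtain $M\models\Phi$.

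The proof then closes by reflection: since $\varphi$ carries no parameters, $j(\Phi)=\Phi$, so elementarity of $j$ gives $V\models\Phi$, which is precisely the assertion that VP holds for $\mathcal{C}$. The step I expect to require the most care is the verification that the witnessing embedding $j\restriction A$ genuinely belongs to $M$ (so that $M\models\Phi$ really holds), which rests on the $\theta$-closure of $M$ and on recovering $j\restriction V_{\alpha_0}$ in $M$ from the two $\theta$-sequences $\langle x_\xi\rangle_{\xi<\theta}$ and $\langle j(x_\xi)\rangle_{\xi<\theta}$ enumerating $V_{\alpha_0}$ and its pointwise image. The remaining delicate point is the absoluteness bookkeeping: one must use the $\Pi_1$-ness of $\varphi$ only in the downward direction, to obtain $M\models\varphi(A)$, while letting elementarity alone supply $M\models\varphi(j(A))$.
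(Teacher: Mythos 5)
Your proof is correct and follows essentially the same route as the paper's: a $<\lambda$-supercompactness embedding $j$, closure of $M$ to place the chosen structure and $j$ restricted to it inside $M$, downward absoluteness of the $\Pi_1$ definition, and finally elementarity of $j$ to reflect the configuration back to $V$. The only (harmless) difference is the last step: you pull back the parameter-free sentence asserting VP for $\mathcal{C}$, whereas the paper pulls back the parametrized statement ``some $A\in\mathcal{C}$ of rank $<j(\kappa)$ embeds elementarily into $j(B)$,'' which yields the slightly stronger conclusion that some $A\in\mathcal{C}\cap V_\kappa$ embeds elementarily into the given $B$.
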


\begin{proof}
Since $\lambda$ captures $\mathcal{C}$,  in $V_{\lambda}$ there exist elements of $\mathcal{C}$ of arbitrarily high rank.
So, since $\lambda \in Lim(C^{(1)})$, we can find $\delta <\lambda$  such that $V_{\delta}=H_\delta$, and $B\in \mathcal{C}\cap V_{\delta}$ of rank greater than $\kappa$. Let  $j:V\to M$ be an elementary embedding with critical point $\kappa$, with $j(\kappa)>\delta$, and $M$ closed under $\delta$-sequences.
Since $B\in M$ and $\mathcal{C}$ is $\Pi_1$ definable, $M\models ``B\in \mathcal{C}"$. And since $M$ is closed under $\delta$-sequences, the elementary embedding $j\restriction B:B\to j(B)$ belongs to $M$. Thus,
$$M\models ``\exists A\in \mathcal{C}\, \exists e(rank(A)<j(\kappa)\wedge e:A\to j(B)\mbox{ is elementary}),$$
since this is witnessed by $B$ and $j\restriction B$.

By elementarity, the same must hold in $V$, namely,
$$\exists A\in \mathcal{C}\, \exists e(rank(A)<\kappa\wedge e:A\to B\mbox{ is elementary}),$$
which is what we wanted.
\end{proof}

We give next a strong converse to Theorem \ref{vppi1}.
\begin{theorem}[\cite{BCMR}]
\label{theorem1}
Suppose that $\Ce$ is a $\mathbf{\Sigma_2}$ (not necessarily proper) class of structures of the same type $\tau$, and suppose 
that there exists a supercompact cardinal $\kappa$ larger than the rank of the parameters that 
appear in some  $\Sigma_2$ definition of~$\Ce$, and with $\tau \in V_\kappa$. Then for every $B\in \Ce$ 
there exists $A\in \Ce \cap V_{\kappa}$ that is elementarily embeddable into~$B$.
\end{theorem}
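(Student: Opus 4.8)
The plan is to run the reflection argument of Proposition~\ref{scimpliesvp}, but using the full strength of supercompactness to absorb the jump from $\Pi_1$ to $\Sigma_2$ classes. Fix a $\Sigma_2$ formula $\chi(x,\bar p)$ defining $\Ce$, where the parameters $\bar p$ have rank below $\kappa$, and fix $B\in\Ce$. The crucial preliminary is the standard characterization of $\Sigma_2$ truth (see \cite{K}): there is a first-order formula $\Phi$ such that ZFC proves $\chi(x,\bar p)\leftrightarrow \exists\alpha\,(V_\alpha\models\Phi(x,\bar p))$. Since $\chi(B,\bar p)$ holds in $V$, I fix an ordinal $\alpha_0$ with $V_{\alpha_0}\models\Phi(B,\bar p)$, and then choose $\lambda\in C^{(1)}$ with $\lambda>\max\{\alpha_0,rank(B)\}$; being in $C^{(1)}$, $\lambda$ is a strong limit, so in particular $|B|<\lambda$.

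Now I invoke $\lambda$-supercompactness: let $j:V\to M$ be elementary with $crit(j)=\kappa$, $j(\kappa)>\lambda$, and $M$ closed under $\lambda$-sequences. Three facts follow at once. First, $j$ fixes $\bar p$ and $\tau$ (their ranks lie below $\kappa$), so the class defined in $M$ by $\chi(\cdot,\bar p)$ is exactly $j(\Ce)$ and again consists of structures of type $\tau$. Second, since $rank(B)<\lambda$ we have $B\in M$ and $(V_\lambda)^M=V_\lambda$. Third, since $|B|<\lambda$ the restriction $j\restriction B$ lies in $M$ and is an elementary embedding of $B$ into $j(B)$.

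The key step is to show that $M\models\chi(B,\bar p)$, i.e.\ $B\in j(\Ce)$ as computed in $M$. Because $\alpha_0<\lambda$, the set $V_{\alpha_0}$ belongs to $M$, and satisfaction in the set structure $V_{\alpha_0}$ is absolute between $V$ and $M$; hence $M\models V_{\alpha_0}\models\Phi(B,\bar p)$, so $M\models\exists\alpha\,(V_\alpha\models\Phi(B,\bar p))$. Applying the $\Sigma_2$ characterization inside $M$ (it is a theorem of ZFC, hence holds in $M$) yields $M\models\chi(B,\bar p)$. This is precisely where the extra quantifier of $\Sigma_2$ over $\Pi_1$ is handled, and it is the main obstacle: one must manufacture a witness to the $\Sigma_2$ truth of ``$B\in\Ce$'' that survives into $M$, which is exactly what choosing $\lambda$ above $\alpha_0$ accomplishes. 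For a $\Pi_1$ class the corresponding fact is free by downward absoluteness, whereas here it is the crux of the proof.

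With $B\in j(\Ce)$ in hand, the structure $A=B$ together with the embedding $e=j\restriction B$ witnesses in $M$ the statement ``there exist a structure $A$ of type $\tau$ with $\chi(A,\bar p)$ and $rank(A)<j(\kappa)$, and an elementary embedding $e:A\to j(B)$''. Reading this as the $j$-image of a statement about the parameters $\bar p,\tau,\kappa,B$ (using $j(\bar p)=\bar p$ and $j(\tau)=\tau$), elementarity of $j$ gives that in $V$ there exist a structure $A$ of type $\tau$ with $\chi(A,\bar p)$, $rank(A)<\kappa$, and an elementary embedding $A\to B$. Thus $A\in\Ce\cap V_\kappa$ embeds elementarily into $B$, as required.
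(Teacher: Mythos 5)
Your proof is correct, and its overall architecture is the same as the paper's: take a $\lambda$-supercompactness embedding $j:V\to M$ with $j(\kappa)>\lambda$, show that $M\models\varphi(B,b)$, use $B$ itself and $j\restriction B\in M$ as witnesses inside $M$, and pull back by elementarity of $j$. The difference lies in how the crucial step $M\models\varphi(B,b)$ is obtained. The paper chooses $\lambda\in C^{(2)}$ above $\mathrm{rank}(B)$, so that $V_\lambda\preceq_2 V$ gives $V_\lambda\models\varphi(B,b)$, and then transfers this up to $M$ via $V_\lambda\preceq_1 M$ (which holds because $V_\lambda=H_\lambda=(H_\lambda)^M$), using that a $\Sigma_2$ statement true in $V_\lambda$ is true in any model in which $V_\lambda$ is $\Sigma_1$-elementary. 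You instead invoke the Levy normal form for $\Sigma_2$ formulas, $\chi(x,\bar p)\leftrightarrow\exists\alpha\,(V_\alpha\models\Phi(x,\bar p))$, fix a witness ordinal $\alpha_0$ in $V$, and only need $\lambda\in C^{(1)}$ above $\alpha_0$: since $(V_{\alpha_0})^M=V_{\alpha_0}$ and set-structure satisfaction is absolute, the witness survives into $M$, and the ZFC-provable equivalence applied inside $M$ yields $M\models\chi(B,\bar p)$. The two mechanisms are morally equivalent (the normal form theorem is essentially the reason $C^{(2)}$-correctness interacts well with $\Sigma_2$ formulas), but they buy slightly different things: your version is more self-contained modulo the standard normal-form fact and makes explicit that one only needs $\lambda$ to dominate a single witness ordinal, whereas the paper's version, phrased entirely in terms of the correctness classes $C^{(n)}$ and elementarity relations among $V_\lambda$, $M$, and $V$, is the formulation that generalizes directly to the proof of Theorem \ref{theorem4}, where $C^{(n)}$-extendibility and the correctness of $\lambda$ and $j(\kappa)$ replace the $\Sigma_1$-elementarity argument. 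You also correctly identify why the $\Pi_1$ case (Proposition \ref{scimpliesvp}) is easier: there, downward absoluteness of $\Pi_1$ formulas from $V$ to the transitive class $M$ gives $M\models``B\in\Ce"$ for free.
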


\begin{proof}
Fix a $\Sigma_2$ formula $\varphi (x,y)$ and a set $b$ such that $\Ce=\{ B:\varphi (B,b)\}$, 
and suppose that $\kappa$ is a supercompact cardinal with $b\in V_{\kappa}$. 
Fix $B\in \Ce$, and let $\lambda  \in C^{(2)}$ be greater than $\text{rank}(B)$. Let $j\colon V\to M$ be an elementary embedding with $M$ transitive and  
critical point~$\kappa$, such that $j(\kappa)>\lambda$ and $M$ is closed under $\lambda$-sequences. Thus, $B$ and $j\restriction B:B\to j(B)$ are in~$M$, and also 
$V_{\lambda}\in M$.  Hence $V_{\lambda}\preceq_{1}M$. Moreover, since $j(\tau)=\tau$, $j(B)$ is a structure of type $\tau$, and $j\restriction B$ is an elementary embedding.

Since $V_{\lambda}\preceq_{2}V$, $V_{\lambda}\models \varphi (B,b)$. And since 
$\Sigma_2$ formulas are upwards absolute between $V_{\lambda}$ and~$M$, $M\models \varphi (B,b)$.
%Moreover, by elementarity of $j$, we have $M\models \varphi(j(B),b)$, and again by upward absoluteness, $V\models \varphi (j(B),b)$. That is, $j(B)\in \Ce$.

Thus, in $M$ it is true that there exists $X\in M_{j(\kappa)}$ such that $\varphi(X,b)$, namely $B$, 
and there exists an elementary embedding $e\colon X\to j(B)$, namely $j\restriction B$. 
Therefore, by elementarity, the same holds in~$V$; that is, 
there exists $X\in V_{\kappa}$ such that $\varphi (X,b)$, and there exists an elementary embedding 
$e\colon X\to B$.
\end{proof}

The following corollaries  give characterizations of Vop\v{e}nka's principle for $\Pi_1$ and $\Sigma_2$ classes in terms of supercompactness. The equivalence of (2) and (3) in the next two corollaries was already proved in \cite{BCMR}.

\begin{corollary}
\label{theorem1-1}
The following are equivalent:
\begin{enumerate}
\item $VP(\Pi_1)$.
\item $VP(\kappa, \mathbf{\Sigma}_2)$, for some $\kappa$.
\item There exists a supercompact cardinal.
\end{enumerate}
\end{corollary}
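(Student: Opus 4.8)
The plan is to establish the three implications $(1)\Rightarrow(3)\Rightarrow(2)\Rightarrow(1)$, drawing on the theorems already proved in the excerpt. Since Theorem~\ref{vppi1}(1) states directly that $VP(\Pi_1)$ implies the existence of a supercompact cardinal, the implication $(1)\Rightarrow(3)$ is immediate and requires no further work.

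For $(3)\Rightarrow(2)$, I would simply observe that this is essentially a restatement of Theorem~\ref{theorem1}. Assume there is a supercompact cardinal; by the L\"owenheim--Skolem-type closure of the class of supercompacts (or more elementarily, since any supercompact above a given rank will do), pick a supercompact cardinal $\kappa$ large enough that any particular $\mathbf{\Sigma}_2$ class under consideration has its defining parameters of rank below $\kappa$. Then Theorem~\ref{theorem1} yields exactly the reflection-below-$\kappa$ conclusion that defines $VP(\kappa,\mathbf{\Sigma}_2)$. The only mild subtlety is that $VP(\kappa,\mathbf{\Sigma}_2)$ quantifies over \emph{all} $\mathbf{\Sigma}_2$ classes whose type and parameters lie in $H_\kappa$, so I should fix a single supercompact $\kappa$ and note that Theorem~\ref{theorem1} applies uniformly to every such class; this is fine because the hypothesis ``$\kappa$ larger than the rank of the parameters'' is met for all classes defined with parameters in $H_\kappa=V_\kappa$.

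For the remaining implication $(2)\Rightarrow(1)$, I would argue that $\mathbf{\Sigma}_2$ reflection below some $\kappa$ forces $VP$ for $\Pi_1$ classes. Given a $\Pi_1$ (hence lightface, parameter-free, and in particular $\mathbf{\Sigma}_2$) proper class $\Ce$ of structures of the relevant type, $VP(\kappa,\mathbf{\Sigma}_2)$ provides, for any $B\in\Ce$, some $A\in\Ce\cap H_\kappa$ elementarily embeddable into $B$. To produce the \emph{two distinct} members demanded by $VP(\Pi_1)$, I would take $B\in\Ce$ of rank strictly above $\kappa$ (possible since $\Ce$ is proper), apply reflection to obtain $A\in\Ce\cap H_\kappa$ with an elementary embedding $A\to B$, and note $A\ne B$ since they have different ranks. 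This directly yields the conclusion of $VP(\Pi_1)$.

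The main obstacle I anticipate is the bookkeeping in $(2)\Rightarrow(1)$: I must check that a $\Pi_1$ class of structures in the sense of $VP(\Pi_1)$ (structures for the language of set theory with an extra unary relation) genuinely falls under the scope of $VP(\kappa,\mathbf{\Sigma}_2)$, i.e.\ that its type lies in $H_\kappa$ and that being $\Pi_1$ makes it $\mathbf{\Sigma}_2$ with the empty parameter set in $H_\kappa$. Both hold trivially---$\Pi_1\subseteq\Sigma_2\subseteq\mathbf{\Sigma}_2$ and the finite type of first-order structures sits in $H_\kappa$ for any uncountable $\kappa$---so the implication goes through once these absorptions are spelled out. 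The genuinely substantive content of the corollary has already been discharged by Theorems~\ref{vppi1} and~\ref{theorem1}, so the proof I would write is short, consisting mainly of citing those results and assembling the cycle.
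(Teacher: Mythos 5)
Your proposal is correct and follows essentially the same route as the paper's proof, which establishes the identical cycle: $(1)\Rightarrow(3)$ by Theorem~\ref{vppi1}(1), $(3)\Rightarrow(2)$ by Theorem~\ref{theorem1}, and $(2)\Rightarrow(1)$ declared immediate. The details you supply---noting $H_\kappa=V_\kappa$ for a supercompact (hence inaccessible) $\kappa$ in $(3)\Rightarrow(2)$, and in $(2)\Rightarrow(1)$ absorbing $\Pi_1$ into $\mathbf{\Sigma}_2$ and choosing $B\in\Ce$ of rank above $\kappa$ to guarantee $A\ne B$---are exactly the routine verifications the paper leaves unstated.
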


\begin{proof}
(2)$\Rightarrow$(1) is immediate. (1)$\Rightarrow$(3) is given by Theorem \ref{vppi1}, (1). And (3)$\Rightarrow$(2) follows from Theorem \ref{theorem1}.
\end{proof}

The next corollary gives the parameterized version. The implication (1)$\Rightarrow$(3) is given by Theorem \ref{vppi1}, (2). 

\begin{corollary}
\label{theorem1-2}
The following are equivalent:
\begin{enumerate}
\item $VP(\mathbf{\Pi}_1)$.
\item $VP(\kappa, \mathbf{\Sigma}_2)$, for a proper class of cardinals $\kappa$.
\item There exists a proper class of supercompact cardinals.
%\item The class of supercompact cardinals is $\mathbf{\Sigma}_2$-stationary.
\end{enumerate}
\end{corollary}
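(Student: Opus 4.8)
The plan is to mirror the proof of Corollary \ref{theorem1-1} but track the parameters carefully so that the single-cardinal statements become proper-class statements. I would prove the three implications $(2)\Rightarrow(1)$, $(1)\Rightarrow(3)$, and $(3)\Rightarrow(2)$, reusing the parameterized ingredients already established in the excerpt.

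First, $(2)\Rightarrow(1)$ is immediate. Any $\mathbf{\Pi}_1$ class is in particular $\mathbf{\Sigma}_2$, and since $VP(\kappa,\mathbf{\Sigma}_2)$ holds for a proper class of $\kappa$, it holds for at least one $\kappa$ whose $H_\kappa$ contains the type and the parameters of the given $\mathbf{\Pi}_1$ definition. For such a $\kappa$, reflection below $\kappa$ gives, for any $B$ in the class, some $A\ne B$ in the class that is elementarily embeddable into $B$ (the classes in $VP(\mathbf{\Pi}_1)$ are proper, so $A$ and $B$ can be taken distinct), which is exactly the conclusion of $VP(\mathbf{\Pi}_1)$. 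For $(1)\Rightarrow(3)$, I would simply cite Theorem \ref{vppi1}, part (2), as the corollary statement already notes: $VP(\mathbf{\Pi}_1)$ yields a proper class of supercompact cardinals.

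The substantive direction is $(3)\Rightarrow(2)$, and here I would invoke Theorem \ref{theorem1} parameter by parameter. Given a proper class of supercompact cardinals, I claim that for every such supercompact $\kappa$ the statement $VP(\kappa,\mathbf{\Sigma}_2)$ holds. Indeed, fix a $\mathbf{\Sigma}_2$ proper class $\Ce$ of structures of a common type $\tau$ whose type and defining parameters lie in $H_\kappa=V_\kappa$ (using $\kappa\in C^{(1)}$, which holds since a supercompact cardinal is in $C^{(1)}$). Since $\kappa$ is supercompact and larger than the rank of the parameters in a $\Sigma_2$ definition of $\Ce$ with $\tau\in V_\kappa$, Theorem \ref{theorem1} applies verbatim: for every $B\in\Ce$ there exists $A\in\Ce\cap V_\kappa$ elementarily embeddable into $B$. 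As the supercompact cardinals form a proper class, so does the collection of $\kappa$ witnessing $VP(\kappa,\mathbf{\Sigma}_2)$, giving $(2)$.

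The only point requiring care, and hence the main obstacle, is the bookkeeping of which $\kappa$ can witness $VP(\kappa,\mathbf{\Sigma}_2)$ for a given class: a fixed supercompact $\kappa$ handles only those classes whose parameters sit below $\kappa$. But this is harmless precisely because we are proving a proper-class statement on both sides. For any prescribed class with parameters of rank below some ordinal $\xi$, the proper class of supercompact cardinals contains one above $\xi$, and Theorem \ref{theorem1} then delivers reflection. Thus no uniformity across all classes is needed from a single cardinal; the proper class of supercompacts supplies, for each class separately, an adequate witness, and that is all $VP(\kappa,\mathbf{\Sigma}_2)$ for a proper class of $\kappa$ asserts.
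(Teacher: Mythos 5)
Your proof is correct and follows exactly the route the paper intends for this corollary: (2)$\Rightarrow$(1) immediate, (1)$\Rightarrow$(3) by Theorem \ref{vppi1}(2), and (3)$\Rightarrow$(2) by applying Theorem \ref{theorem1} to each supercompact cardinal, with the parameter bookkeeping handled by the proper-class quantification. The details you fill in (distinctness of $A$ and $B$ via the properness of the class, and $H_\kappa = V_\kappa$ for inaccessible $\kappa$) are exactly the right ones.
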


We shall give next a characterization of supercompactness in terms of a natural principle of reflection. 
%The following Lemma is due to Magidor \cite{M} (see also \cite{K}, 23.6).
%\begin{lemma}
%Suppose $\alpha$ is a limit ordinal and there exists an elementary embedding $j:V_\alpha \to V_\lambda$, some $\lambda$,  then $crit(j)$ is $<\alpha$-supercompact.
%\end{lemma} 
 Recall from Definition \ref{variantsVP} that a cardinal $\kappa$ \emph{reflects} a class of structures $\Ce$ of the same type if for every $B\in \Ce$ there exists $A\in \Ce \cap H_\kappa$ which is elementarily embeddable into $B$.

% follows from his characterization of supercompact cardinals, namely: $\kappa$ is supercompact if and only if for a proper class of  $\lambda$ greater than $\kappa$ there exists $\alpha <\kappa$ and an elementary embedding $j:V_\alpha \to V_{\lambda + \omega}$ such that $\kappa$ is the image under $j$ of the critical point (see \cite{M,  K}).

\begin{theorem}[Magidor \cite{M}]
\label{theoremMagidor}
If $\kappa$ is the least cardinal that reflects the $\Pi_1$ proper class $\Ce$  of structures of the form $\langle V_{\lambda},\in \rangle$, then $\kappa$ is supercompact. 
\end{theorem}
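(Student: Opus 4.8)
The plan is to prove full supercompactness of $\kappa$ from a single, carefully chosen instance of the reflection hypothesis, by manufacturing an elementary embedding to which Magidor's Lemma \ref{Magidor} applies. The engine of the argument is this. Suppose I can find a limit $\lambda\in C^{(2)}$ and, by reflection applied to $\langle V_\lambda,\in\rangle\in\Ce$, an elementary embedding $e\colon\langle V_{\bar\lambda},\in\rangle\to\langle V_\lambda,\in\rangle$ with $\bar\lambda<\kappa$ and with $e(\mathrm{crit}(e))=\kappa$. Since $\bar\lambda$ is a limit ordinal, Lemma \ref{Magidor} applies to $e$ and shows $\mathrm{crit}(e)$ is $<\bar\lambda$-supercompact; moreover the normal fine measures it produces, namely $\mathcal U_{\bar\gamma}=\{X\subseteq\mathcal P_{\mathrm{crit}(e)}(\bar\gamma):e''\bar\gamma\in e(X)\}$ for $\bar\gamma<\bar\lambda$ (or their analogues using an iterate of $e$), all lie in $V_{\bar\lambda}$, because $\bar\lambda$ is a strong limit and these objects have rank below $\bar\lambda$. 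Hence $V_{\bar\lambda}\models$``$\mathrm{crit}(e)$ is supercompact''. If $\mathrm{crit}(e)$ is the preimage of $\kappa$, then by elementarity $V_\lambda\models$``$\kappa$ is supercompact'', and since ``$x$ is supercompact'' is $\Pi_2$ and $V_\lambda\preceq_2 V$, I conclude that $\kappa$ is supercompact in $V$.

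Everything thus reduces to arranging $e(\mathrm{crit}(e))=\kappa$, which is where minimality enters. First, since $\Ce$ is $\Pi_1$ definable without parameters, $\kappa$ is itself definable without parameters as ``the least cardinal reflecting $\Ce$''. I choose $\lambda$ so that $V_\lambda$ computes this correctly: for each $\kappa'<\kappa$ fix the least $\lambda_{\kappa'}$ such that $\langle V_{\lambda_{\kappa'}},\in\rangle$ witnesses the failure of $\kappa'$ to reflect $\Ce$, put $\lambda^{*}=\sup_{\kappa'<\kappa}\lambda_{\kappa'}$, and take $\lambda\in C^{(2)}$ a limit with $\lambda>\lambda^{*}$. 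Then $V_\lambda\models$``$\kappa$ reflects $\Ce$'' (reflection witnesses for structures of rank below $\lambda$ have low rank, hence belong to $V_\lambda$), while for each $\kappa'<\kappa$ the $\Pi_1$ statement ``$\langle V_{\lambda_{\kappa'}},\in\rangle$ is not reflected below $\kappa'$'' is downward absolute to $V_\lambda$; hence $V_\lambda\models$``$\kappa$ is the least cardinal reflecting $\Ce$''. Being parameter-free definable, $\kappa$ lies in the range of $e$, and if $\bar\kappa:=(\text{least cardinal reflecting }\Ce)^{V_{\bar\lambda}}$ then $e(\bar\kappa)=\kappa$.

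It remains to see that $\mathrm{crit}(e)=\bar\kappa$. Since $e(\bar\kappa)=\kappa>\bar\kappa$, the critical point is at most $\bar\kappa$; suppose $\rho:=\mathrm{crit}(e)<\bar\kappa$. By the engine above, $V_{\bar\lambda}\models$``$\rho$ is supercompact''. Here I invoke the auxiliary pure-ZFC fact that every supercompact cardinal reflects $\Ce$: given supercompact $\delta$ and $V_\mu$ with $\mu\ge\delta$, a $|V_\mu|$-supercompactness embedding $j\colon V\to M$ has $V_\mu\in M$ and $j\restriction V_\mu\in M$, so $M\models$``some $\langle V_{\bar\mu},\in\rangle$ with $\bar\mu<j(\delta)$ embeds into $\langle V_{j(\mu)},\in\rangle$'', and reflecting through $j$ shows $\delta$ reflects $V_\mu$. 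Applying this inside $V_{\bar\lambda}$ to $\rho$ gives $V_{\bar\lambda}\models$``$\rho$ reflects $\Ce$'', contradicting the minimality of $\bar\kappa$ there. Hence $\mathrm{crit}(e)=\bar\kappa$, so $V_{\bar\lambda}\models$``$\bar\kappa$ is supercompact'', and the transfer of the first paragraph finishes the proof. The main obstacle is exactly the identity $e(\mathrm{crit}(e))=\kappa$: its two halves — that $V_\lambda$ correctly identifies $\kappa$ (giving $e(\bar\kappa)=\kappa$) and that no smaller ordinal is the critical point (giving $\mathrm{crit}(e)=\bar\kappa$) — rest on the minimality of $\kappa$ together with, respectively, the choice of $\lambda\in C^{(2)}$ above all counterexamples and the ``supercompact $\Rightarrow$ reflecting'' lemma; once these are secured, Lemma \ref{Magidor} and the $\Sigma_2$-correctness of $\lambda$ do the rest.
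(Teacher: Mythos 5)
Your proof is correct in substance, but it is organized genuinely differently from the paper's. The paper reflects the single structure $\langle V_{\lambda+1},\in\rangle$ for a \emph{singular} $\lambda\in C^{(3)}$, obtaining $j\colon V_{\alpha+1}\to V_{\lambda+1}$ with $\alpha<\kappa$; singularity of $\lambda$ forces $\mathrm{crit}(j)=\beta<\alpha$, Lemma \ref{Magidor} gives $V_\alpha\models$``$\beta$ is supercompact'', elementarity gives $V_\lambda\models$``$j(\beta)$ is supercompact'', and $\lambda\in C^{(3)}$ together with the $\Pi_3$ complexity of supercompactness makes $j(\beta)$ supercompact in $V$; minimality of $\kappa$ enters only at the end, to show $j(\beta)=\kappa$ (if $j(\beta)>\kappa$, one pulls ``some ordinal reflects $\Ce$'' back below $\beta$ and contradicts minimality). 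You instead make $\kappa$ parameter-free definable in $V_\lambda$ by choosing $\lambda$ above all the least witnesses $\lambda_{\kappa'}$ to non-reflection, so that $e(\bar\kappa)=\kappa$ holds by elementarity for $\bar\kappa$ the least reflecting cardinal of $V_{\bar\lambda}$, and then use minimality \emph{inside} $V_{\bar\lambda}$ to force $\mathrm{crit}(e)=\bar\kappa$. This dispenses with the singularity device and the regularity-of-the-critical-point argument (your $e$ visibly moves $\bar\kappa$), at the cost of verifying that $V_\lambda$ computes ``least cardinal reflecting $\Ce$'' correctly, which you do via absoluteness of the reflection witnesses and downward absoluteness of non-reflection.

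Two of your steps are asserted where they need arguments, though both can be repaired without new ideas. First, you claim ``$x$ is supercompact'' is $\Pi_2$; the naive computation (and the paper) gives only $\Pi_3$, and with only $\Pi_3$ your final transfer from $V_\lambda\models$``$\kappa$ is supercompact'' with $\lambda\in C^{(2)}$ yields merely that $\kappa$ is ${<}\lambda$-supercompact. Your claim is in fact true: $\kappa$ is supercompact if and only if $V_\mu\models$``$\kappa$ is supercompact'' for every limit $\mu>\kappa$ (measures on $\mathcal{P}_\kappa(\gamma)$ have rank about $\gamma+3$ and their defining properties are absolute), and this localized formulation is $\Pi_2$ because ``$x=V_\mu$'' is $\Pi_1$ and satisfaction in set structures is $\Delta_1$. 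But this localization must be included, since it is exactly what carries the final step; alternatively, take $\lambda\in C^{(3)}$ throughout (nothing else changes, as $C^{(3)}\subseteq C^{(2)}$) and quote the $\Pi_3$ bound as the paper does. Second, ``applying the supercompact-implies-reflecting fact inside $V_{\bar\lambda}$'' cannot be taken literally: $\bar\lambda$ may be singular, so $V_{\bar\lambda}$ need not satisfy replacement, and the ultrapower argument cannot be carried out there. Instead, use that $\rho=\mathrm{crit}(e)$ is genuinely ${<}\bar\lambda$-supercompact in $V$ by Lemma \ref{Magidor}, run the embedding argument of Theorem \ref{theorem1} in $V$ for each $\langle V_\mu,\in\rangle$ with $\mu<\bar\lambda$ (possible since $\beth_\mu<\bar\lambda$, as $\bar\lambda$ is a strong limit), and observe that the resulting witnesses $A\in\Ce\cap H_\rho$ and embeddings have rank below $\bar\lambda$; by absoluteness $V_{\bar\lambda}\models$``$\rho$ reflects $\Ce$'', which is the contradiction with the minimality of $\bar\kappa$ that you need.
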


\begin{proof}
Let $\alpha <\kappa$ be such that there is  an elementary embedding $j : V_{\alpha +1}\to V_{\lambda +1}$ for some  singular $\lambda\in C^{(3)}$. Since $j$ sends $\alpha$ to $\lambda$, it must have a critical point $\beta$, which must be smaller than $\alpha$, for if $\beta =\alpha$, then $\alpha$ would be regular in $V_{\alpha +1}$ (being the critical point of the embedding), and so by elementarity $\lambda$ would be regular in $V_{\lambda +1}$, hence regular in $V$, contrary to our choice of $\lambda$.

By Lemma \ref{Magidor}, $\beta$ is $<\alpha$-supercompact, and so $V_\alpha \models ``\beta$ is supercompact". By elementarity of $j$, $V_\lambda \models ``j (\beta)$ is supercompact", hence, since $\lambda \in C^{(3)}$, and being supercompact is $\Pi_3$ expressible,  $j (\beta)$ is supercompact.

Thus  $j(\beta) \geq \kappa$, because $j(\beta)$ reflects $\Ce$, by Theorem \ref{theorem1}, and $\kappa$ is the least cardinal that does this. Now suppose, aiming for a contradiction, that  $j(\beta) >\kappa$. Then 
$$V_{j(\beta)}\models ``\kappa \mbox{ reflects the class }\mathcal{C}".$$
Hence, by elementarity of   $j$,
$$V_\beta \models ``\gamma \mbox{ reflects the class }\mathcal{C}"$$
for some $\gamma <\beta$. And since $\gamma$ is fixed by $j$, 
$$V_{j(\beta)}\models ``\gamma \mbox{ reflects the class }\mathcal{C}".$$ 
Making use of the fact that $V_{j(\beta)} \preceq_{\Sigma_2}V$, it follows that $\gamma$ reflects the class $\mathcal{C}$, thus contradicting the minimality of $\kappa$.
\end{proof}

The last two theorems yield the following characterizations of the first supercompact cardinal.

\begin{corollary}
The following are equivalent:
\begin{enumerate}
\item $\kappa$ is the first supercompact cardinal.
\item $\kappa$ is the least cardinal that reflects all $\Sigma_2$ definable, with parameters in $V_\kappa$, classes of structures of the same type. i.e., $\kappa$ is the least ordinal for which $VP(\kappa , \mathbf{\Sigma_2})$ holds.
\item $\kappa$ is the least cardinal that reflects the $\Pi_1$ class of structures of the form $\langle V_\lambda ,\in \rangle$, $\lambda$ an ordinal.
\end{enumerate}
\end{corollary}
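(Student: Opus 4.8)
The plan is to establish the cycle of implications $(1)\Rightarrow(2)\Rightarrow(3)\Rightarrow(1)$, leaning entirely on the two preceding theorems. Throughout, let $\Ce_0$ denote the $\Pi_1$ proper class of structures $\langle V_\lambda,\in\rangle$, $\lambda$ an ordinal, from Theorem~\ref{theoremMagidor}. Three observations drive everything. First, since $\Pi_1\subseteq\mathbf{\Sigma_2}$ and $\Ce_0$ is parameter-free with type $\{\in\}\in H_\kappa$ for every infinite cardinal $\kappa$, the assertion $VP(\kappa,\mathbf{\Sigma_2})$ entails in particular that $\kappa$ reflects $\Ce_0$. Second, every supercompact cardinal $\mu$ satisfies $VP(\mu,\mathbf{\Sigma_2})$ by Theorem~\ref{theorem1} (the rank of any admissible parameters is below $\mu$), and hence reflects $\Ce_0$; here one uses that a supercompact cardinal is inaccessible, so $V_\mu=H_\mu$, matching the $V_\mu$ produced by Theorem~\ref{theorem1} with the $H_\mu$ in the definition of reflection (Definition~\ref{variantsVP}). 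Third, the \emph{least} cardinal reflecting $\Ce_0$ is supercompact, by Theorem~\ref{theoremMagidor}.

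For $(1)\Rightarrow(2)$: if $\kappa$ is the first supercompact cardinal, then $VP(\kappa,\mathbf{\Sigma_2})$ holds by the second observation. For minimality, suppose toward a contradiction that some cardinal $\mu<\kappa$ also satisfies $VP(\mu,\mathbf{\Sigma_2})$; by the first observation $\mu$ reflects $\Ce_0$, so the least cardinal $\mu_0\le\mu$ reflecting $\Ce_0$ is supercompact by the third observation, giving a supercompact cardinal below $\kappa$ and contradicting the choice of $\kappa$. (Since all the cardinals at issue are inaccessible, ``least ordinal'' and ``least cardinal'' coincide here, so this yields (2) as stated.)

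For $(2)\Rightarrow(3)$: if $\kappa$ is least with $VP(\kappa,\mathbf{\Sigma_2})$, then $\kappa$ reflects $\Ce_0$ by the first observation; and if some $\mu<\kappa$ reflected $\Ce_0$, then the least such $\mu_0\le\mu$ would be supercompact by the third observation, whence $VP(\mu_0,\mathbf{\Sigma_2})$ holds by Theorem~\ref{theorem1}, contradicting the minimality of $\kappa$. Thus $\kappa$ is the least cardinal reflecting $\Ce_0$. For $(3)\Rightarrow(1)$: if $\kappa$ is the least cardinal reflecting $\Ce_0$, then $\kappa$ is supercompact by Theorem~\ref{theoremMagidor}; and were some $\mu<\kappa$ supercompact, the second observation would give that $\mu$ reflects $\Ce_0$, contradicting minimality. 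Hence $\kappa$ is the first supercompact cardinal.

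The only genuine subtlety, and the step I would watch most carefully, is that Theorem~\ref{theoremMagidor} asserts supercompactness only of the \emph{least} cardinal reflecting $\Ce_0$, not of every cardinal that reflects it. Consequently, at each point where I know merely that \emph{some} $\mu$ reflects $\Ce_0$, I must first descend to the least such cardinal before invoking Magidor's theorem; this is exactly what makes the three minimality conditions in $(1)$–$(3)$ line up. The remaining bookkeeping—the passage between $H_\kappa$ (used in the definition of reflection and of $VP(\kappa,\mathbf{\Sigma_2})$) and $V_\kappa$ (used in Theorem~\ref{theorem1})—is harmless, since every cardinal entering the argument is inaccessible and thus satisfies $V_\kappa=H_\kappa$.
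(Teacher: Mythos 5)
Your proposal is correct and takes essentially the same approach as the paper: both arguments rest entirely on Theorem~\ref{theorem1} (supercompact implies $VP(\kappa,\mathbf{\Sigma_2})$), the trivial fact that $VP(\kappa,\mathbf{\Sigma_2})$ yields reflection of the $\Pi_1$ class of structures $\langle V_\lambda,\in\rangle$, and Theorem~\ref{theoremMagidor} (the least cardinal reflecting that class is supercompact). The paper merely compresses your explicit cycle $(1)\Rightarrow(2)\Rightarrow(3)\Rightarrow(1)$, with its careful descents to least elements, into the one-line observation that these three classes of cardinals are nested and the minimum of the largest lies in the smallest.
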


\begin{proof}
If $\kappa$ is a supercompact cardinal, then by Theorem \ref{theorem1} $VP(\kappa ,\mathbf{\Sigma_2})$ holds, and therefore $\kappa$ reflects the class of structures $\langle V_\lambda ,\in \rangle$, $\lambda$ an ordinal. So by Theorem \ref{theoremMagidor}, (1), (2), and (3) are equivalent.
\end{proof}

The following parameterized version of the last Corollary has been pointed out by David Asper\'o. 
\begin{corollary}
A cardinal $\kappa$ reflects all $\mathbf{\Pi}_1$ (proper) classes of structures of the same type if and only if either $\kappa$ is a supercompact cardinal or a limit of supercompact cardinals.
\end{corollary}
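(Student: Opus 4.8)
The plan is to prove both implications, with the right-to-left direction following quickly from Theorem \ref{theorem1} and the left-to-right direction being a parameterized refinement of Magidor's Theorem \ref{theoremMagidor}. Throughout I read ``$\kappa$ reflects all $\mathbf{\Pi}_1$ classes'' in the sense of Definition \ref{variantsVP}, i.e.\ for proper classes whose type and defining parameters lie in $H_\kappa$; note that every $\kappa$ arising on either side is a strong limit, so $H_\kappa=V_\kappa$. For the converse direction, suppose first that $\kappa$ is supercompact. Since $\Pi_1\subseteq\Sigma_2$, Theorem \ref{theorem1} applied to any $\mathbf{\Pi}_1$ class $\Ce$ with parameters of rank below $\kappa$ immediately gives that $\kappa$ reflects $\Ce$. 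If instead $\kappa$ is a limit of supercompacts, then given such a $\Ce$ with parameter $p$ and type $\tau$ in $V_\kappa$, and given $B\in\Ce$, I choose a supercompact $\kappa'<\kappa$ with $p,\tau\in V_{\kappa'}$; by Theorem \ref{theorem1} there is $A\in\Ce\cap V_{\kappa'}\subseteq\Ce\cap H_\kappa$ elementarily embeddable into $B$. Either way $\kappa$ reflects all $\mathbf{\Pi}_1$ classes.

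For the forward direction I would reduce everything to one assertion: for every $\alpha_0<\kappa$ there is a supercompact cardinal in the interval $(\alpha_0,\kappa]$. Granting this, the dichotomy is immediate, since if $\kappa$ is not supercompact then for each $\alpha_0<\kappa$ the witnessing supercompact lies in $(\alpha_0,\kappa)$, so the supercompacts are cofinal in $\kappa$ and $\kappa$ is a limit of supercompacts. To prove the assertion, fix $\alpha_0<\kappa$ and consider the $\mathbf{\Pi}_1$ proper class
$$\Ce_{\alpha_0}=\{\langle V_\mu,\in,(\gamma)_{\gamma\le\alpha_0}\rangle: \mu>\alpha_0\},$$
whose type carries a constant for each $\gamma\le\alpha_0$ (so that any elementary embedding between members of $\Ce_{\alpha_0}$ fixes every ordinal $\le\alpha_0$, exactly as in the proof of Theorem \ref{vppi1}(2)), and whose single parameter $(\gamma)_{\gamma\le\alpha_0}$ lies in $V_\kappa$. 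Since $\kappa$ reflects $\Ce_{\alpha_0}$, the class of cardinals reflecting $\Ce_{\alpha_0}$ is nonempty, hence has a least element $\kappa_0\le\kappa$; moreover $\kappa_0>\alpha_0$, as a cardinal reflecting $\Ce_{\alpha_0}$ must contain members of $\Ce_{\alpha_0}$ below it.

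The heart of the proof is to show that this least reflecting cardinal $\kappa_0$ is supercompact, by carrying the argument of Theorem \ref{theoremMagidor} over to $\Ce_{\alpha_0}$. I would pick a singular $\lambda\in C^{(3)}$ above $\kappa_0$ and reflect $\langle V_{\lambda+1},\in,(\gamma)_{\gamma\le\alpha_0}\rangle$ down to some $\langle V_{\alpha+1},\in,(\gamma)_{\gamma\le\alpha_0}\rangle\in\Ce_{\alpha_0}\cap H_{\kappa_0}$, yielding elementary $j:V_{\alpha+1}\to V_{\lambda+1}$. Because the constants are fixed, $crit(j)=\beta>\alpha_0$; because $\lambda$ is singular while $j$ sends $\alpha$ to $\lambda$, one gets $\beta<\alpha$ just as in Magidor's argument; and Lemma \ref{Magidor} applied to $j\restriction V_\alpha:V_\alpha\to V_\lambda$ gives that $\beta$ is $<\alpha$-supercompact, so $V_\alpha\models ``\beta$ is supercompact''. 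By elementarity $V_\lambda\models ``j(\beta)$ is supercompact'', and since $\lambda\in C^{(3)}$ and supercompactness is $\Pi_3$, $j(\beta)$ is genuinely supercompact with $j(\beta)>\alpha_0$. By Theorem \ref{theorem1} this $j(\beta)$ reflects $\Ce_{\alpha_0}$, so $j(\beta)\ge\kappa_0$ by minimality; and the possibility $j(\beta)>\kappa_0$ is excluded by the same reflection-of-``$\gamma$ reflects $\Ce_{\alpha_0}$'' argument that closes the proof of Theorem \ref{theoremMagidor}, using $V_{j(\beta)}\preceq_2 V$. Hence $j(\beta)=\kappa_0$, so $\kappa_0$ is supercompact with $\alpha_0<\kappa_0\le\kappa$, establishing the assertion.

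The step I expect to be the main obstacle is precisely this parameterized Magidor argument: one must check that adjoining constants for all $\gamma\le\alpha_0$ leaves the entire proof of Theorem \ref{theoremMagidor} intact — in particular that the critical point is trapped strictly between $\alpha_0$ and $\alpha$, and that the concluding minimality contradiction goes through verbatim with the parameter $\alpha_0$ carried along. Once that relativization is in hand, both directions of the equivalence are routine.
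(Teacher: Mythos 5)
Your proof is correct, and its right-to-left half coincides with the paper's (closure of the reflection property under limits plus Theorem \ref{theorem1}); but for the forward direction you take a genuinely different route. The paper settles that direction in one line, saying it ``can be proved as in Theorem \ref{vppi1} (2)'', i.e.\ by relativizing the class of structures $\langle V_{\lambda+2},\in,\alpha,\lambda,\{\gamma\}_{\gamma\le\xi}\rangle$, where $\lambda$ is the \emph{least} level at which supercompactness of the cardinals in $(\xi,\alpha]$ fails, and deriving a contradiction from Kunen's Theorem and Lemma \ref{Magidor}. You instead relativize Theorem \ref{theoremMagidor}: you introduce the class $\Ce_{\alpha_0}$ of \emph{all} structures $\langle V_\mu,\in,(\gamma)_{\gamma\le\alpha_0}\rangle$ and show that the least cardinal reflecting it is a supercompact cardinal in $(\alpha_0,\kappa]$, using Lemma \ref{Magidor} for the critical point, Theorem \ref{theorem1} for the lower bound $j(\beta)\ge\kappa_0$, and $\Sigma_2$-correctness of supercompacts for the minimality step. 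Your route buys something concrete: $\Ce_{\alpha_0}$ is a proper class outright, so the reflection hypothesis (which Definition \ref{variantsVP} formulates only for proper classes) applies unconditionally; by contrast, the class used in Theorem \ref{vppi1} (2) is proper only when there is no supercompact cardinal above $\xi$ anywhere in $V$, so under the corollary's merely local hypothesis (no supercompact in $(\xi,\kappa]$, but possibly some above $\kappa$) the literal adaptation the paper gestures at requires an extra device --- essentially the decoupling of the rank of the structures from the witnessing data that the paper only introduces later, in Theorems \ref{reflectionextendibles} and \ref{reflectionextendiblesbold}. What the paper's route buys, when it applies, is a self-contained combinatorial contradiction that never invokes Theorem \ref{theorem1} or correctness of supercompacts inside the forward direction. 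One minor remark: your opening claim that any $\kappa$ reflecting all $\mathbf{\Pi}_1$ classes is a strong limit is not justified a priori (it follows only from the corollary itself), but nothing on that side of your argument actually uses it, since you work with $H_{\kappa_0}$ directly.
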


\begin{proof}
Clearly, the property of reflecting $\mathbf{\Pi}_1$ classes of structures is closed under limits. So if $\kappa$ is a supercompact cardinal or a limit of supercompact cardinals, then Theorem \ref{theorem1} implies that  $\kappa$ reflects all $\mathbf{\Pi_1}$ classes. The other direction can be proved as in  Theorem  \ref{vppi1} (2).
\end{proof}

\medskip

%We say that a proper class $\Ce$ is $\mathbf{\Sigma}_{n}\wedge \mathbf{\Pi}_{n}$ if it is definable, with parameters, by a formula that is a conjunction of a $\Sigma_n$ formula and a $\Pi_n$ formula.

We will prove next similar results for classes of higher complexity, for which we shall need $C^{(n)}$-extendible cardinals.

\begin{theorem}
\label{theorem4}
For every $n\geq 1$, if $\kappa$ is a $C^{(n)}$--extendible cardinal, then every class $\Ce$ of structures of the same type $\tau \in H_\kappa$ which is $\Sigma_{n+2}$ definable, with parameters in $H_\kappa$, reflects below $\kappa$. Hence $VP(\kappa, \mathbf{\Sigma}_{n+2})$ holds.
\end{theorem}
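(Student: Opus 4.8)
The plan is to adapt the proof of Theorem \ref{theorem1} to the bounded elementary embeddings provided by $C^{(n)}$-extendibility, using a layered reflection between the classes $C^{(n)}$, $C^{(n+1)}$ and $C^{(n+2)}$. Fix a $\Sigma_{n+2}$ formula $\varphi(x,y)$ and a parameter $b\in H_\kappa$ with $\Ce=\{B:\varphi(B,b)\}$, and fix $B\in\Ce$. Since $C^{(n+2)}$ is unbounded, I would choose $\lambda\in C^{(n+2)}$ with $\lambda>\kappa$ and $\mathrm{rank}(B)<\lambda$, so that $B,b,\tau\in V_\lambda$ and $V_\lambda\preceq_{n+2}V$. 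As $\kappa$ is $\lambda$-$C^{(n)}$-extendible, fix an elementary $j\colon V_\lambda\to V_\mu$ with $\mathrm{crit}(j)=\kappa$, $j(\kappa)>\lambda$ and $j(\kappa)\in C^{(n)}$; note $\lambda<j(\kappa)<\mu$.

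The first and main step is to show that $V_{j(\kappa)}\models\varphi(B,b)$, which is what makes the whole argument bounded and hence transportable along $j$. Writing $\varphi=\exists x\,\psi$ with $\psi$ a $\Pi_{n+1}$ formula, I would argue as follows. Since $V\models\varphi(B,b)$ and $V_\lambda\preceq_{n+2}V$, also $V_\lambda\models\varphi(B,b)$, so there is a witness $a\in V_\lambda$ with $V_\lambda\models\psi(a,B,b)$; as $\lambda\in C^{(n+1)}$ and $\psi$ is $\Pi_{n+1}$, this gives $V\models\psi(a,B,b)$. Now $a,B,b\in V_\lambda\subseteq V_{j(\kappa)}$ and $j(\kappa)\in C^{(n)}$, so the $\Pi_{n+1}$ sentence $\psi(a,B,b)$, being true in $V$, holds in $V_{j(\kappa)}$; hence $V_{j(\kappa)}\models\exists x\,\psi(x,B,b)$, i.e.\ $V_{j(\kappa)}\models\varphi(B,b)$. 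This three-level reflection (witness extracted in $V_\lambda$, matrix pushed up to $V$ and then down to $V_{j(\kappa)}$) is the technical heart of the proof and its main obstacle.

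Next I would set up two bounded reflection statements linked by $j$. Let $\Theta_0$ be the sentence, with parameters $\kappa,\tau,b,B$, asserting
$$\exists A\,\big[A\text{ is a }\tau\text{-structure}\wedge\mathrm{rank}(A)<\kappa\wedge (V_\kappa\models\varphi(A,b))\wedge\exists e\,(e\colon A\to B\text{ elementary})\big],$$
interpreted in $V_\lambda$, and let $\Theta$ be its $j$-image, interpreted in $V_\mu$:
$$\exists A\,\big[A\text{ is a }\tau\text{-structure}\wedge\mathrm{rank}(A)<j(\kappa)\wedge (V_{j(\kappa)}\models\varphi(A,b))\wedge\exists e\,(e\colon A\to j(B)\text{ elementary})\big].$$
Here ``$V_\kappa\models\varphi$'' and ``$V_{j(\kappa)}\models\varphi$'' are expressed via the definable satisfaction predicate for the set-sized structures $V_\kappa\in V_\lambda$ and $V_{j(\kappa)}\in V_\mu$, so both sentences are genuine first-order formulas; since $j$ fixes $\tau,b$ and maps $\kappa,B$ to $j(\kappa),j(B)$, we get $j(\Theta_0)=\Theta$. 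The point is that $V_\mu\models\Theta$, witnessed by $A:=B$ and $e:=j\restriction B$: indeed $\mathrm{rank}(B)<\lambda<j(\kappa)$; the statement $V_{j(\kappa)}\models\varphi(B,b)$ holds by the previous step and is absolute to $V_\mu$ because $V_{j(\kappa)}\in V_\mu$ is computed correctly there; and $j\restriction B\colon B\to j(B)$ is an elementary embedding lying in $V_\mu$, since $j(\tau)=\tau$ and $\mathrm{rank}(j\restriction B)<\mu$. By the elementarity of $j$, from $V_\mu\models\Theta$ I conclude $V_\lambda\models\Theta_0$.

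Finally, $V_\lambda\models\Theta_0$ yields some $A\in V_\lambda$ with $\mathrm{rank}(A)<\kappa$ (hence $A\in V_\kappa=H_\kappa$, as $\kappa\in C^{(1)}$), with $V_\kappa\models\varphi(A,b)$, and an elementary embedding $e\colon A\to B$ (elementarity of $e$ being absolute). It then remains to upgrade $V_\kappa\models\varphi(A,b)$ to $A\in\Ce$, i.e.\ to $V\models\varphi(A,b)$. This is where Proposition \ref{3.5} is essential: since $\kappa$ is $C^{(n)}$-extendible, $\kappa\in C^{(n+2)}$, so $V_\kappa\preceq_{n+2}V$ and the $\Sigma_{n+2}$ truth of $\varphi(A,b)$ transfers upward to $V$. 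Thus $A\in\Ce\cap H_\kappa$ embeds elementarily into $B$, proving that $\Ce$ reflects below $\kappa$ and hence $VP(\kappa,\mathbf{\Sigma}_{n+2})$. Once the reflection statements are framed with the bounded satisfaction predicates so that they commute with $j\colon V_\lambda\to V_\mu$, the remainder is a transfer argument parallel to Theorem \ref{theorem1}.
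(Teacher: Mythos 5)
Your proof is correct, and its skeleton is the same as the paper's (choose $\lambda\in C^{(n+2)}$ above $\kappa$ and $\mathrm{rank}(B)$, take a $\lambda$-$C^{(n)}$-extendibility embedding $j\colon V_\lambda\to V_\mu$, use $j\restriction B$ as the witnessing embedding, pull an existential statement back along $j$, and finish with a correctness upgrade), but the transfer step is organized differently, and the difference is worth noting. The paper works with $V_\mu$ directly: it pushes the internalized statement of $V_\kappa\preceq_{n+2}V_\lambda$ through $j$ to get $V_{j(\kappa)}\preceq_{n+2}V_\mu$, observes that $j(\kappa)\in C^{(n)}$ gives $V_\lambda\preceq_{n+1}V_{j(\kappa)}$, hence $V_\lambda\preceq_{n+1}V_\mu$, so that $V_\mu$ itself satisfies the $\Sigma_{n+2}$ statement ``$B\in\Ce$''; the pulled-back statement then reads $V_\lambda\models\exists x\,\varphi(x,A,b)$, and the upgrade to $A\in\Ce$ uses $\lambda\in C^{(n+2)}$. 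You instead localize the membership of $B$ in $\Ce$ inside $V_{j(\kappa)}$, proving $V_{j(\kappa)}\models\varphi(B,b)$ outright by witness-chasing ($V_\lambda\to V\to V_{j(\kappa)}$, using $\lambda\in C^{(n+1)}$ and $j(\kappa)\in C^{(n)}$), and you wrap membership inside the satisfaction predicate for the set-sized models $V_{j(\kappa)}$ and $V_\kappa$; the pulled-back statement then reads $V_\kappa\models\varphi(A,b)$, and the upgrade uses $\kappa\in C^{(n+2)}$, i.e.\ Proposition \ref{3.5}. What your variant buys is that no elementarity relations among $V_\lambda$, $V_{j(\kappa)}$, $V_\mu$ ever need to be established --- only the absoluteness of satisfaction for set structures --- at the price of carrying the satisfaction predicate explicitly in $\Theta_0$ and $\Theta$; the paper keeps the reflected statement in the pure language of set theory but must verify the $\preceq_{n+1}$/$\preceq_{n+2}$ chain. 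Note that both arguments invoke Proposition \ref{3.5} in an essential but different place: the paper needs $\kappa\in C^{(n+2)}$ at the outset to assert $V_\kappa\preceq_{n+2}V_\lambda$, while you need it only for the final upgrade from $V_\kappa\models\varphi(A,b)$ to $A\in\Ce$.
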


\begin{proof}
Fix a $\Sigma_{n+2}$ formula $\exists x \varphi (x,y,z)$, where $\varphi$ is $\Pi_{n+1}$, such that for some set $b\in V_{\kappa}=H_\kappa$, 
$$\Ce :=\{ B:\exists x \varphi (x, B,b)\}$$ is a  class of structures of the same type $\tau\in H_\kappa$.   

Fix $B\in \Ce$ and let $\lambda \in C^{(n+2)}$ be greater than $\kappa$ and the rank of $B$. Thus, 
$$V_{\lambda}\models \exists x \varphi (x,B,b).$$

Let $j:V_{\lambda}\to V_{\mu}$  be an elementary embedding with critical point $\kappa$, with $j(\kappa)>\lambda$, and $j(\kappa)\in C^{(n)}$. Note that $B$ and $j\restriction B:B\to j(B)$ are in $V_{\mu}$. Moreover, since $j$ fixes $\tau$,  $j(B)$ is a structure of type $\tau$, and $j\restriction B$ is an elementary embedding.

As $\kappa,\lambda\in C^{(n+2)}$ (see Proposition \ref{3.5}), it follows  that $V_{\kappa}\preceq_{n+2}V_{\lambda}$. So we have
$$V_{\lambda}\models ``\forall x\in V_{\kappa} \forall \theta \in \Sigma_{n+2} (V_{\kappa}\models \theta (x) \leftrightarrow \models_{n+2} \theta (x))".$$
Hence, by elementarity,
$$V_{\mu}\models ``\forall x\in V_{j(\kappa)} \forall \theta \in \Sigma_{n+2} (V_{j(\kappa)}\models \theta (x) \leftrightarrow \models_{n+2} \theta (x))",$$
which says that  $V_{j(\kappa)}\preceq_{n+2} V_{\mu}$.

Since $j(\kappa)\in C^{(n)}$, we also have $V_{\lambda}\preceq_{n+1} V_{j(\kappa)}$, and therefore $V_{\lambda}\preceq_{n+1} V_{\mu}$. It follows that $V_{\mu}\models \exists x \varphi (x,B,b)$, because $V_\lambda \models \exists x \varphi (x,B,b)$. 

Thus, in $V_{\mu}$ it is true that there exists $X\in V_{j(\kappa)}$ such that $X\in \Ce$, namely $B$, and there exists an elementary embedding $e:X\to j(B)$, namely $j\restriction B$.  Therefore, by the elementarity of $j$, the same is true in $V_{\lambda}$, that is,  there exists $X\in V_{\kappa}$ such that $X\in \Ce$, and there exists an elementary embedding $e:X\to B$. Let $A\in V_{\kappa}$ be such an $X$, and let $e:A\to B$ be an elementary embedding. Since $\lambda\in C^{(n+2)}$, $A\in \Ce$, and we are done.
\end{proof}
%
%\begin{corollary}
%If $\kappa$ is an extendible cardinal, then $VP(\kappa, \mathbf{\Sigma}_{3})$ holds.
%\end{corollary}

The next theorem  yields a strong converse to Theorem \ref{theorem4}. 

%We say that a proper class $\Ce$ is $\Sigma_{n}\wedge\Pi_{n}$ if it is definable, without parameters, by a formula that is a conjunction of a $\Sigma_n$ formula and a $\Pi_n$ formula.

\medskip

The notion of $C^{(n)}$-extendibility used in \cite{BCMR} has the following apparently stronger form -- let us call it $C^{(n)+}$-extendibility: For $\lambda \in C^{(n)}$, a cardinal
$\kappa$ is \emph{$\lambda$-$C^{(n)+}$-extendible} if it is $\lambda$-$C^{(n)}$-extendible, witnessed by some $j:V_{\lambda}\to V_{\mu}$ which, in addition to satisfying $j(\kappa)>\lambda$ and $j(\kappa)\in C^{(n)}$, also satisfies that $\mu \in C^{(n)}$. 
$\kappa$ is \emph{$C^{(n)+}$-extendible} if it is $\lambda$-$C^{(n)+}$-extendible for every $\lambda >\kappa$ with $\lambda \in C^{(n)}$.

Every extendible cardinal is $C^{(1)+}$-extendible (see  the proof of Proposition \ref{proposition3}). We shall see below that the first $C^{(n)}$-extendible cardinal is $C^{(n)+}$-extendible, for all $n$.

\begin{theorem}
\label{vppin+1}
Suppose $n\geq 1$. If $VP(\Pi_{n+1})$ holds, then there exists a $C^{(n)+}$-extendible cardinal.
\end{theorem}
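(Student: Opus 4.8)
The plan is to mimic the proof of Theorem \ref{vppi1}, replacing supercompactness by $C^{(n)+}$-extendibility and the appeal to Lemma \ref{Magidor} by a direct reading of the reflected embedding as an extendibility witness. Assume, towards a contradiction, that there is no $C^{(n)+}$-extendible cardinal. First I would record that $\lambda$-$C^{(n)+}$-extendibility is \emph{monotone} in $\lambda$ along $C^{(n)}$: if $\lambda<\lambda'$ are both in $C^{(n)}$ and $\kappa$ is $\lambda'$-$C^{(n)+}$-extendible, witnessed by $j\colon V_{\lambda'}\to V_{\mu'}$, then $j\restriction V_\lambda\colon V_\lambda\to V_{j(\lambda)}$ witnesses $\lambda$-$C^{(n)+}$-extendibility, since $j(\kappa)>\lambda'>\lambda$ and $j(\lambda)\in C^{(n)}$ follows from $\mu'\in C^{(n)}$, $\lambda\in C^{(n)}$ and elementarity (using that $C^{(n)}$ is correctly computed below any point of $C^{(n)}$). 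Consequently, for each ordinal $\alpha$ there is a \emph{least} $\lambda=\eta(\alpha)\in C^{(n)}$ with $\lambda>\alpha$ such that no $\gamma\le\alpha$ is $\lambda$-$C^{(n)+}$-extendible.

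Next I would set $\Ce$ to be the class of structures $\langle V_{\lambda+2},\in,\alpha,\lambda\rangle$ with $\lambda=\eta(\alpha)$, and check that $\Ce$ is $\Pi_{n+1}$ definable without parameters. The clause $\lambda\in C^{(n)}$ is $\Pi_n$, the clauses $X_0=V_{\lambda+2}$ and $X_1={\in}$ are $\Pi_1$, and ``no $\gamma\le\alpha$ is $\lambda$-$C^{(n)+}$-extendible'' is $\Pi_{n+1}$, because $\lambda$-$C^{(n)+}$-extendibility is $\Sigma_{n+1}$ (the computation following Definition \ref{definition1}, the extra demand $\mu\in C^{(n)}$ being $\Pi_n$) and $\forall\gamma\le\alpha$ is bounded. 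The minimality of $\lambda$ can be expressed by a quantifier $\forall\lambda'<\lambda$ evaluated inside $V_{\lambda+2}$; since $\lambda\in C^{(n)}$ renders $\lambda'$-$C^{(n)+}$-extendibility for $\lambda'<\lambda$ absolute to $V_\lambda$, this contributes nothing beyond $\Pi_{n+1}$. Under our assumption $\eta(\alpha)$ is defined for every $\alpha$, so $\Ce$ is a proper class.

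By $VP(\Pi_{n+1})$ there are distinct $\langle V_{\lambda+2},\in,\alpha,\lambda\rangle$ and $\langle V_{\mu+2},\in,\beta,\mu\rangle$ in $\Ce$ with an elementary embedding $j$ between them, necessarily sending $\alpha\mapsto\beta$ and $\lambda\mapsto\mu$. As in Theorem \ref{vppi1}, $j$ is not the identity, so by Kunen's Theorem (\cite{Ku}; \cite{K}, 23.14) we cannot have $\lambda=\mu$, whence $\lambda<\mu$; since $\beta=\alpha$ would force $\mu=\eta(\beta)=\eta(\alpha)=\lambda$, we get $\alpha<\beta$, and therefore $j$ has critical point $\kappa\le\alpha$. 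The goal is then to show that $\kappa$ is $\lambda$-$C^{(n)+}$-extendible, contradicting the defining clause of $\Ce$. The candidate witness is $j\restriction V_\lambda\colon V_\lambda\to V_\mu$: it is elementary with critical point $\kappa$, its target satisfies $\mu=\eta(\beta)\in C^{(n)}$, and $j(\kappa)\in C^{(n)}$ follows from $\mu\in C^{(n)}$ together with $\lambda\in C^{(n)}$ and elementarity.

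The hard part is securing $j(\kappa)>\lambda$, exactly the feature distinguishing extendibility from supercompactness and which Theorem \ref{vppi1} did not need. Here I would run a Kunen/Magidor critical-sequence argument in the spirit of Lemma \ref{Magidor}: were every iterate of the critical point to stay $\le\lambda$, their supremum $\delta$ would be a fixed point of $j$ with $\delta\le\lambda$; $\delta=\lambda$ is impossible because $j(\lambda)=\mu>\lambda$, and $\delta<\lambda$ would yield a nontrivial elementary $j\restriction V_{\delta+2}\colon V_{\delta+2}\to V_{\delta+2}$, contradicting Kunen's Theorem. Thus the critical sequence overshoots $\lambda$. The main obstacle I anticipate is converting this into the single inequality $j(\kappa)>\lambda$ for the critical point itself, rather than for some higher iterate; I expect to handle this by refining the marked data so that $\alpha$ is forced to \emph{be} the critical point (for instance, taking $\alpha$ least such that $\eta(\alpha)$ already exhibits the failure), after which $j(\alpha)=j(\kappa)=\beta$ and the overshoot argument gives $j(\kappa)=\beta>\lambda$ directly. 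Then $j\restriction V_\lambda$ genuinely witnesses that the critical point $\le\alpha$ is $\lambda$-$C^{(n)+}$-extendible, contradicting the membership $\langle V_{\lambda+2},\in,\alpha,\lambda\rangle\in\Ce$ and completing the proof.
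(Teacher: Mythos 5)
Your skeleton matches the paper's: assume there is no $C^{(n)+}$-extendible cardinal, attach to each $\alpha$ a least ``failure point'', form a $\Pi_{n+1}$ class of structures marked with this data, apply $VP(\Pi_{n+1})$, and read the resulting embedding as an extendibility witness. Your monotonicity lemma is correct (and something like it is implicitly needed in the paper as well, to see that the failure-point function is total). But there is a genuine gap exactly where you flag it: securing $j(\kappa)>\lambda$ for $\kappa=crit(j)$. Your overshoot argument only shows that some iterate of the critical point eventually exceeds $\lambda$, and your proposed repair --- refining the marked data so that $\alpha$ is ``forced to be the critical point'' --- cannot work: $VP$ hands you an arbitrary elementary embedding between two structures of the class, and no choice of distinguished constants can prevent its critical point from lying strictly below $\alpha$; the constants only give $\kappa\le\alpha$, and nothing in your setup rules out $j(\kappa)\le\lambda$, in which case $j\restriction V_\lambda$ is simply not a witness. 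The paper resolves this by a structurally different device: it forms the club $C$ of closure points of the failure-point function $F$, takes $\alpha\in C$ and $\lambda_\alpha$ a \emph{limit point} of $C$, and crucially adds the predicate $C\cap(\alpha+1)$ to the structures. A Kunen-type argument against this predicate (if $\kappa\notin C$, the least $\delta\in C$ above $\sup(C\cap\kappa)$ is fixed by $j$, yielding a nontrivial $j\restriction V_{\delta+2}:V_{\delta+2}\to V_{\delta+2}$) shows $\kappa\in C$; elementarity and $\lambda_\beta\in C^{(n+1)}$ then give $j(\kappa)\in C$; and since $j(\kappa)$ is a closure point of $F$, one gets $F(\kappa)<j(\kappa)$. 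The contradiction is then derived at $\kappa$'s \emph{own} failure point $F(\kappa)$, via the witness $j\restriction V_{F(\kappa)}$ --- never at $\lambda$. This shift from $\lambda$ to $F(\kappa)$, made possible by the club of closure points carried inside the structures, is the missing idea.

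Two smaller but real problems. First, your failure points live in $C^{(n)}$, and you justify $\Pi_{n+1}$ definability of the minimality clause by claiming that $\lambda'$-$C^{(n)+}$-extendibility is absolute to $V_\lambda$ for $\lambda\in C^{(n)}$. Only the upward direction holds: the statement is $\Sigma_{n+1}$, so reflecting its witnesses \emph{down} into $V_\lambda$ requires $\lambda\in C^{(n+1)}$. (This is precisely why the paper defines $F(\alpha)$ as the least $\lambda\in C^{(n+1)}$, not $C^{(n)}$, exhibiting the failure.) As written, your formula defines only the subclass of those $\alpha$ whose witnesses happen to reflect below $\eta(\alpha)$, and you cannot prove that this is a proper class, so $VP(\Pi_{n+1})$ need not apply to it. Second, your assertion that $j(\kappa)\in C^{(n)}$ ``follows from $\mu,\lambda\in C^{(n)}$ and elementarity'' presupposes $\kappa\in C^{(n)}$, which you never establish; that, too, requires a Kunen argument against a predicate coding $C^{(n)}$ (compare the proof of Theorem \ref{reflectionextendibles}), or membership of $\kappa$ in the club $C$ as above.
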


\begin{proof}
Suppose there are no $C^{(n)+}$-extendible cardinals. Then the class function $F$   on the ordinals given by:
\begin{quote}
$F(\alpha)=$ the least $\lambda\in C^{(n+1)}$ greater than $\alpha$ such that $\alpha$ is not $\lambda$-$C^{(n)+}$-extendible,
\end{quote}
is defined for all ordinals $\alpha$.

Let $C=\{ \eta >0:  \forall \alpha <\eta \; F(\alpha )<\eta \}$. So $C$ is a closed unbounded proper class of ordinals, and is  contained in $C^{(n+1)}$ because every $\eta\in C$ is the supremum of the set $\{ F(\alpha):\alpha <	\eta\} \subseteq C^{(n+1)}$.

We claim that $C$ is $\Pi_{n+1}$ definable, without parameters. It is sufficient to see that $F$ is $\Pi_{n+1}$ definable. We have: $\lambda=F(\alpha)$ if and only if 
\begin{enumerate}
\item $\lambda\in C^{(n+1)}$
\item $\alpha <\lambda$
\item $\forall \beta >\lambda (\beta\in C^{(n)} \to V_{\beta}\models (\alpha \mbox{ is not } \lambda\mbox{-$C^{(n)+}$-extendible}))$, and 
\item $V_{\lambda}\models \forall \lambda' >\alpha (\lambda' \in C^{(n+1)} \to (\alpha \mbox{ is } \lambda'\mbox{-$C^{(n)+}$-extendible}))$.
\end{enumerate}
 The point is that, for any $\alpha <\lambda'$, the relation ``$\alpha$ is $\lambda'$-$C^{(n)+}$-extendible" is $\Sigma_{n+1}$,   for it holds if and only if    
$$\exists \mu \exists j(j:V_{\lambda'}\to V_{\mu}\mbox{ is elementary}\; \wedge crit(j)=\alpha \wedge j(\alpha)>\lambda'\wedge j(\alpha), \mu  \in C^{(n)}).$$
 So it holds in $V$ if and only if it holds in $V_{\lambda}$, for any $\lambda \in C^{(n+1)}$ greater than $\lambda'$. And if it holds in $V_{\beta}$, with $\beta\in C^{(n)}$, then it holds in $V$. Moreover, since $\lambda\in C^{(n+1)}$, for every $\lambda'<\lambda$ we have $\lambda'\in C^{(n+1)}$ if and only if $V_{\lambda}\models \lambda'\in C^{(n+1)}$.

Since the conjunction of statements (1)-(4) above is $\Pi_{n+1}$, it follows that $F$, and therefore also $C$, is $\Pi_{n+1}$ definable. Let $\varphi$ be a $\Pi_{n+1}$ formula that defines $C$.

For each ordinal $\alpha$, let $\lambda_{\alpha}$ be the least limit point of $C$ greater than $\alpha$. We have that $x=\lambda_{\alpha}$ if and only if  $x$ is an ordinal greater than $\alpha$ that belongs to $C$ and is such that
\begin{enumerate}
\item $V_x\models \forall \beta \exists \gamma (\gamma >\beta \wedge \varphi(\gamma))$
\item $V_x\models \forall \beta (\beta >\alpha\to \exists \gamma<\beta \forall \eta (\gamma <\eta<\beta \to \neg \varphi(\eta))),$
\end{enumerate}
which shows that the function $\alpha \longmapsto \lambda_{\alpha}$ is $\Pi_{n+1}$ definable.

Consider now the proper class $\mathcal{C}$ of structures $\mathcal{A}_{\alpha}$ of the form $$\langle V_{\lambda_{\alpha} }, \in ,    \alpha ,\lambda_{\alpha},  C\cap \alpha +1  \rangle,$$
where $\alpha \in C$. 

We claim that $\mathcal{C}$ is $\Pi_{n+1}$ definable. 
We have: $X\in \mathcal{C}$ if and only if $X=\langle X_0, X_1, X_2 ,X_3,X_4 \rangle$, where

 \begin{enumerate}
 \item $X_2 \in C$
 \item $X_3 =\lambda_{X_2}$
 \item $X_0=V_{X_3}$
 \item $X_1=\in \restriction X_0$
 \item $X_4=C\cap X_2 +1$
   \end{enumerate}
We have already seen that (1) and (2) are $\Pi_{n+1}$ expressible. And so are (3) and (4), as one can easily see. As for (5), note that $X_4=C\cap X_2 +1$ holds in $V$ if and only if 
it holds in $V_{X_3}$. So (5) is equivalent to
$$V_{X_3}\models \forall x(x\in X_4 \leftrightarrow \varphi (x)\wedge x\in X_2 +1)$$
which is $\Pi_{n+1}$ expressible.
 
So by $VP(\Pi_{n+1})$ there exist  $\alpha \ne \beta$  and an elementary embedding $$j:\mathcal{A}_{\alpha}\to \mathcal{A}_{\beta}.$$
Since  $j$ must send $\alpha$ to $\beta$, $j$ is not the identity. So $j$ has  critical point some $\kappa \leq\alpha$. 
%And by Kunen's Theorem (see \cite{K}) we must have $\lambda_{\alpha} <\lambda_{\beta}$. 

We claim that $\kappa \in  C$. Otherwise, $\gamma :=sup(C\cap \kappa )<\kappa$. Let $\delta$ be the least ordinal in $C$ greater than $\gamma$ such that $\delta <\lambda_{\alpha}$. So $\kappa <\delta\leq \alpha$.
Since $\delta$ is definable from $\gamma$ in $\mathcal{A}_{\alpha}$,  and since $j(\gamma)=\gamma$, we must also have $j(\delta)=\delta$. 
But then $j\restriction V_{\delta+2}:V_{\delta+2}\to V_{\delta+2}$ is a nontrivial elementary embedding, contradicting Kunen's Theorem.

Since $\lambda_\alpha \in C^{(n+1)}$, $V_{\lambda_\alpha}\models \varphi(\kappa)$. Hence by elementarity,  $V_{\lambda_\beta}\models \varphi (j(\kappa ))$. So since $\lambda_\beta \in C^{(n+1)}$, it follows that $j(\kappa)\in C$.

Note that since $\lambda_\alpha \in C$, we have $\kappa < F(\kappa )<\lambda_\alpha$. Thus,
$$j\restriction V_{F(\kappa)}:V_{F(\kappa)}\to V_{j(F(\kappa))}$$ is elementary, with critical point $\kappa$. 

And since $j(\kappa)\in C$, $F(\kappa)<j(\kappa)$.
Moreover, by the elementarity of $j$, $V_{\lambda_{\beta}}$ satisfies that $j(F(\kappa))$ belongs to $C^{(n+1)}$, and so since $\lambda_{\beta}\in C^{(n+1)}$ this is true in $V$. 
This shows that $j\restriction V_{F(\kappa)}$ witnesses that $\kappa$ is $F(\kappa)$-$C^{(n)+}$-extendible. But this is impossible by the definition of $F$.
\end{proof}

The proof of the last theorem can be easily adapted to prove  the parameterized version:  if $VP(\mathbf{\Pi}_{n+1})$  holds, then there is a proper class of $C^{(n)}$-extendible cardinals.  Fixing an ordinal $\xi$, to show that there is a $C^{(n)}$-extendible cardinal greater than $\xi$, we argue as above. To ensure that $\kappa >\xi$, we now let $\mathcal{C}$ be the class of structures of the form $$\langle V_{\lambda_\alpha}, \in ,   \alpha , \lambda_\alpha, C\cap \alpha +1 , \{ \gamma \}_{\gamma \leq \xi}  \rangle$$ where $\alpha >\xi$ and $\alpha \in C$. 
The class $\mathcal{C}$ is now $\Pi_{n+1}$ definable with $\xi$ as an additional parameter. If there is no $C^{(n)}$-extendible cardinal above $\xi$, then $\Ce$ is a proper class. So arguing as before we have an elementary embedding $j$ between two different structures in $\Ce$, which now must be the identity on the ordinals less than or equal to $\xi$, and so $j$ has  critical point some $\kappa$ with $\xi <\kappa$. A contradiction then follows as before.

 The following corollaries summarize the results above. The equivalences of (2) and (4) in the next two corollaries were already proved in \cite{BCMR}.

\begin{corollary}
\label{coro4.13}
The following are equivalent for $n\geq 1$:
\begin{enumerate}
\item $VP( \Pi_{n+1})$.
\item $VP(\kappa, \mathbf{\Sigma}_{n+2})$, for some $\kappa$.
%\item $VP(\mathbf{\Sigma}_{n+1}\wedge \mathbf{\Pi}_{n+1})$.
\item There exists a  $C^{(n)}$-extendible cardinal.
\item There exists a $C^{(n)+}$-extendible cardinal.
\end{enumerate}
\end{corollary}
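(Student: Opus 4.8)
The plan is to prove Corollary \ref{coro4.13} by establishing a cycle of implications among the four statements, drawing together the theorems already proved in this section. The equivalences in question are between $VP(\Pi_{n+1})$, the existence of some $\kappa$ with $VP(\kappa,\mathbf{\Sigma}_{n+2})$, the existence of a $C^{(n)}$-extendible cardinal, and the existence of a $C^{(n)+}$-extendible cardinal. Since $C^{(n)+}$-extendibility is (apparently) a strengthening of $C^{(n)}$-extendibility, the implication $(4)\Rightarrow(3)$ is immediate from the definitions, so the work lies in closing the remaining loop.

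The natural route is $(3)\Rightarrow(2)\Rightarrow(1)\Rightarrow(4)$, together with the trivial $(4)\Rightarrow(3)$. First I would argue $(3)\Rightarrow(2)$: assuming a $C^{(n)}$-extendible cardinal $\kappa$ exists, Theorem \ref{theorem4} tells us directly that $VP(\kappa,\mathbf{\Sigma}_{n+2})$ holds for this very $\kappa$, which gives $(2)$ on the nose. Next, $(2)\Rightarrow(1)$ should be essentially immediate, as in the proof of Corollary \ref{theorem1-1}: any $VP(\kappa,\mathbf{\Sigma}_{n+2})$ statement is a reflection principle for $\mathbf{\Sigma}_{n+2}$ classes below $\kappa$, and applying it to a lightface $\Pi_{n+1}$ class of structures yields the two distinct structures with an elementary embedding required by $VP(\Pi_{n+1})$ — one simply takes any $B$ in the class and uses the reflected $A\in H_\kappa$, noting that $A\ne B$ since $A$ has strictly smaller rank. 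Finally, $(1)\Rightarrow(4)$ is precisely the content of Theorem \ref{vppin+1}, which shows that $VP(\Pi_{n+1})$ produces a $C^{(n)+}$-extendible cardinal.

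Having assembled these four links, the cycle $(3)\Rightarrow(2)\Rightarrow(1)\Rightarrow(4)\Rightarrow(3)$ closes and all four statements are equivalent. I expect the main subtlety to lie in the step $(2)\Rightarrow(1)$, where one must be careful that reflection \emph{below} a single cardinal $\kappa$ (a statement about $H_\kappa$ and classes with parameters and type in $H_\kappa$) genuinely delivers the two-structure-with-embedding conclusion of $VP(\Pi_{n+1})$ for the relevant lightface classes of structures in the language of set theory with an added unary relation symbol. The point is that a lightface $\Pi_{n+1}$ class is in particular $\mathbf{\Sigma}_{n+2}$ with no parameters, so its type and (empty set of) parameters trivially lie in $H_\kappa$; reflection then yields $A\in\Ce\cap H_\kappa$ elementarily embeddable into the chosen $B\in\Ce$, and distinctness of $A$ and $B$ follows by choosing $B$ of rank above $\kappa$, which is possible since $\Ce$ is a proper class. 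Everything else is a direct citation of the theorems already established.
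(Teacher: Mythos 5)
Your proposal is correct and takes essentially the same route as the paper: the identical cycle $(3)\Rightarrow(2)\Rightarrow(1)\Rightarrow(4)\Rightarrow(3)$, citing Theorem \ref{theorem4} for $(3)\Rightarrow(2)$ and Theorem \ref{vppin+1} for $(1)\Rightarrow(4)$, with the remaining two implications immediate. Your elaboration of $(2)\Rightarrow(1)$ --- viewing the lightface $\Pi_{n+1}$ class as a parameter-free $\mathbf{\Sigma}_{n+2}$ class, choosing $B$ of rank above $\kappa$, and noting the reflected $A\in H_\kappa$ is then distinct from $B$ --- is exactly what the paper leaves implicit in calling this step ``immediate.''
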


\begin{proof}
(1)$\Rightarrow$(4) follows from Theorem \ref{vppin+1}. (4)$\Rightarrow$(3) and (2)$\Rightarrow$(1) are immediate. And (3)$\Rightarrow$(2) follows from Theorem \ref{theorem4}.
\end{proof}

In particular, since every extendible cardinal is $C^{(1)}$-extendible (Proposition \ref{proposition3}), we have the following.

\begin{corollary}
%\label{corollary2}
$ $
The following are equivalent:
\begin{enumerate}
\item $VP( \Pi_{2})$.
\item $VP(\kappa, \mathbf{\Sigma}_{3})$, for some $\kappa$.
%\item $VP(\mathbf{\Sigma}_{n+1}\wedge \mathbf{\Pi}_{n+1})$.
\item There exists an extendible cardinal.
\item There exists a $C^{(1)+}$-extendible cardinal.
\end{enumerate}
\end{corollary}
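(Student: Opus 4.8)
The plan is to observe that this corollary is nothing more than the $n=1$ instance of Corollary~\ref{coro4.13}, with the single modification that clause~(3), ``there exists a $C^{(1)}$-extendible cardinal,'' has been replaced by ``there exists an extendible cardinal.'' Since clauses (1), (2), and (4) are literally identical to those of Corollary~\ref{coro4.13} taken at $n=1$, the entire content to be verified is that the existence of an extendible cardinal is equivalent to the existence of a $C^{(1)}$-extendible cardinal. Once this single equivalence is in hand, the chain of equivalences follows formally from Corollary~\ref{coro4.13}.

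First I would handle the forward direction. This is supplied directly by Proposition~\ref{proposition3}, which asserts that every extendible cardinal is $C^{(1)}$-extendible; hence the existence of an extendible cardinal immediately yields a $C^{(1)}$-extendible cardinal. (In fact the proof of Proposition~\ref{proposition3} shows more, namely that every extendible cardinal is $C^{(1)+}$-extendible, so one could equally route this direction through clause~(4) rather than clause~(3).)

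For the converse I would argue that every $C^{(1)}$-extendible cardinal is already extendible, and this follows at once from the definitions in Definition~\ref{definition1}: if $\kappa$ is $\lambda$-$C^{(1)}$-extendible, witnessed by an elementary embedding $j:V_{\lambda}\to V_{\mu}$ with $\mathrm{crit}(j)=\kappa$, $j(\kappa)>\lambda$, and $j(\kappa)\in C^{(1)}$, then that very same $j$ witnesses $\lambda$-extendibility of $\kappa$, since $\lambda$-extendibility demands only $\mathrm{crit}(j)=\kappa$ and $j(\kappa)>\lambda$. As this holds for every $\lambda>\kappa$, any $C^{(1)}$-extendible cardinal is extendible. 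Combining the two directions, the existence of an extendible cardinal and the existence of a $C^{(1)}$-extendible cardinal are equivalent statements, so clause~(3) of the present corollary may be substituted for clause~(3) of Corollary~\ref{coro4.13} without disturbing any of the equivalences.

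I do not anticipate any genuine obstacle, as both directions of the extendibility equivalence are immediate—one from an already-proved proposition and the other from inspection of the definitions. The only point requiring mild care is to confirm that $C^{(1)}$-extendibility is indeed a strengthening of plain extendibility (so that the converse is trivial), which amounts to noting that the condition $j(\kappa)\in C^{(1)}$ is an additional requirement imposed on top of the ordinary extendibility clauses.
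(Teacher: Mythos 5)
Your proposal is correct and follows exactly the paper's route: the paper presents this corollary as the $n=1$ instance of Corollary~\ref{coro4.13}, justified by the remark that every extendible cardinal is $C^{(1)}$-extendible (Proposition~\ref{proposition3}), with the converse direction being immediate from Definition~\ref{definition1} since $C^{(1)}$-extendibility only adds the requirement $j(\kappa)\in C^{(1)}$ to extendibility. Your explicit spelling-out of that trivial converse is the only detail the paper leaves unstated.
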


We finally obtain the following characterization of VP. The equivalence of (2), (3), and (5) was already proved in \cite{BCMR}.

\begin{corollary}
\label{coro3.15}
The following are equivalent:
\begin{enumerate}
\item $VP(\Pi_n)$, for every $n$.
\item $VP(\kappa, \mathbf{\Sigma}_n)$,  for a proper class of cardinals $\kappa$, and for every $n$.
\item VP.
\item For every $n$, there exists a $C^{(n)}$-extendible cardinal.
\item For every $n$, there exists a $C^{(n)+}$-extendible cardinal.
%\item For every $n$, there is a stationary proper class of $C^{(n)}$-extendible cardinals. i.e., every definable club proper class contains a $C^{(n)}$-extendible cardinal.
\end{enumerate}
\end{corollary}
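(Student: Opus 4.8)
The plan is to route everything through statement (4), using Corollary \ref{coro4.13} (which gives, for each fixed $n\geq 1$, the equivalence of $VP(\Pi_{n+1})$, of $VP(\kappa,\mathbf{\Sigma}_{n+2})$ for some $\kappa$, and of the existence of a $C^{(n)}$- or a $C^{(n)+}$-extendible cardinal) as the main engine, establishing the cycle (3)$\Rightarrow$(1)$\Rightarrow$(4)$\Rightarrow$(3) together with side-equivalences bringing in (2) and (5). The implication (3)$\Rightarrow$(1) is immediate: each lightface $VP(\Pi_n)$ is a special case of full VP, restricting both the complexity and the type of the classes considered.

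For (1)$\Leftrightarrow$(4) I would apply Corollary \ref{coro4.13} termwise. If (1) holds, then $VP(\Pi_{n+1})$ holds for every $n\geq 1$, whence a $C^{(n)}$-extendible cardinal exists for every $n\geq 1$; the case $n=0$ is free, since a $C^{(1)}$-extendible cardinal is already extendible. Conversely, if (4) holds, then for $n\geq 1$ the same equivalence yields $VP(\Pi_{n+1})$, i.e.\ $VP(\Pi_m)$ for all $m\geq 2$; the two remaining instances are handled separately: $VP(\Pi_1)$ follows because a $C^{(1)}$-extendible cardinal is supercompact (Lemma \ref{Magidor}) and supercompactness gives $VP(\Pi_1)$ (Corollary \ref{theorem1-1}), while $VP(\Pi_0)$ is a theorem of ZFC, being subsumed under $VP(\kappa,\mathbf{\Sigma_1})$ (Theorem \ref{vpsigma1}).

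The crux is (4)$\Rightarrow$(3), and this is where the main obstacle lies: Theorem \ref{theorem4} only reflects classes whose defining parameters and type sit in $H_\kappa$, so a single $C^{(n)}$-extendible cardinal cannot handle VP-instances with arbitrarily large parameters. The device that resolves this is the earlier Proposition that the existence of a $C^{(n+2)}$-extendible cardinal forces a proper class of $C^{(n)}$-extendible cardinals: since (4) provides a $C^{(n+2)}$-extendible cardinal for every $n$, we in fact obtain a proper class of $C^{(n)}$-extendible cardinals for every $n$. Now, given any instance of the VP schema---a formula and a parameter $a$ defining a proper class $\Ce$ of structures of a common type $\tau$---I would choose $n\geq 1$ with the defining formula $\Sigma_{n+2}$ (every formula has such an $n$), pick a $C^{(n)}$-extendible $\kappa$ with $a,\tau\in H_\kappa$, and invoke Theorem \ref{theorem4}: $\Ce$ reflects below $\kappa$, so taking any $B\in\Ce$ of rank at least $\kappa$ we get $A\in\Ce\cap H_\kappa$ elementarily embeddable into $B$ with $A\neq B$, which is precisely the desired conclusion of VP for that instance.

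It remains to fold in (5) and (2). For (4)$\Leftrightarrow$(5) I would apply, for each $n$, the equivalence of the existence of a $C^{(n)}$-extendible cardinal with that of a $C^{(n)+}$-extendible cardinal from Corollary \ref{coro4.13}. For (4)$\Rightarrow$(2), the proper class of $C^{(n)}$-extendible cardinals produced above gives, through Theorem \ref{theorem4}, a proper class of $\kappa$ satisfying $VP(\kappa,\mathbf{\Sigma}_{n+2})$; since this implies $VP(\kappa,\mathbf{\Sigma}_m)$ for every $m\leq n+2$, letting $n$ grow yields, for every $m$, a proper class of $\kappa$ with $VP(\kappa,\mathbf{\Sigma}_m)$, which is (2). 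Finally (2)$\Rightarrow$(1) is immediate from Corollary \ref{coro4.13}, since $VP(\kappa,\mathbf{\Sigma}_{n+2})$ for some $\kappa$ gives $VP(\Pi_{n+1})$ for each $n\geq 1$, with the instances $VP(\Pi_0)$ and $VP(\Pi_1)$ again coming from Theorem \ref{vpsigma1} and Corollary \ref{theorem1-1} as above. This closes all equivalences.
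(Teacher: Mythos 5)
Your proof is correct, and its engine is the same as the paper's: Corollary \ref{coro4.13}. But the decomposition differs in an instructive way. The paper's entire proof reads: ``Clearly, (3) implies (1) and (2) implies (3). All the other implications follow immediately from Corollary \ref{coro4.13}.'' Taken literally, Corollary \ref{coro4.13} only yields the equivalence of (1), (4), (5), and the statement ``for every $n$, $VP(\kappa,\mathbf{\Sigma}_{n+2})$ holds for \emph{some} $\kappa$''; it does not by itself produce the proper-class quantification in (2), and without an arrow into (2) the cycle does not close. (Note that $VP(\kappa,\mathbf{\Sigma}_m)$ does not transfer upward from $\kappa$ to larger cardinals, since larger $\kappa'$ admit more parameters, so ``some $\kappa$'' genuinely falls short of ``a proper class of $\kappa$.'') Your route supplies exactly the missing content: from (4) you invoke the proposition in Section \ref{sectionextendible} that a $C^{(n+2)}$-extendible cardinal yields a proper class of $C^{(n)}$-extendible cardinals, and then Theorem \ref{theorem4} turns each of these into a witness for $VP(\kappa,\mathbf{\Sigma}_{n+2})$; this gives both (4)$\Rightarrow$(2) and, by choosing $\kappa$ above the parameter and the type of any given VP-instance, the direct implication (4)$\Rightarrow$(3). (The paper's implicit alternative would be to get (2) from (3) via the parameterized version of Theorem \ref{vppin+1} stated after its proof, which equally goes beyond Corollary \ref{coro4.13}.) Your separate treatment of the low cases $VP(\Pi_0)$ and $VP(\Pi_1)$ via Theorem \ref{vpsigma1} and Corollary \ref{theorem1-1} is sound, though it can be shortcut by monotonicity: every $\Pi_m$ class is $\Pi_{m+1}$, so $VP(\Pi_2)$ already subsumes them. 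In short, your proof is a correct and more complete rendering of the argument the paper compresses into one sentence.
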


\begin{proof}
Clearly, (3) implies (1) and (2) implies (3). All the other implications follow immediately from Corollary \ref{coro4.13}.
\end{proof}

Since the consistency of VP follows from the consistency of the existence of an almost-huge cardinal (see \cite{K}, 24.18 and Section \ref{sectionhuge} below), an almost-huge cardinal gives an upper bound in the usual large cardinal hierarchy on the consistency strength of $C^{(n)}$-extendible cardinals, all $n\geq 1$.

\medskip

We give next a  characterization of $C^{(n)}$-extendible cardinals in terms of reflection of classes of structures. 

\begin{theorem}
\label{reflectionextendibles}
If $n\geq 1$ and  $\kappa$ is the least cardinal that reflects all $\Pi_{n+1}$ proper classes of structures of the same type, then $\kappa$ is $C^{(n)+}$-extendible.
\end{theorem}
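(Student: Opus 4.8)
The plan is to adapt the proof of Theorem~\ref{vppin+1} to the reflection setting, letting the minimality of $\kappa$ play the role that the global non-existence of $C^{(n)+}$-extendible cardinals plays there, in the spirit of Magidor's minimality argument in Theorem~\ref{theoremMagidor}. First I would reintroduce the failure function $F(\alpha)=$ the least $\lambda\in C^{(n+1)}$ greater than $\alpha$ such that $\alpha$ is not $\lambda$-$C^{(n)+}$-extendible, the closed unbounded class $C=\{\eta>0:\forall\alpha<\eta\,(F(\alpha)<\eta)\}\subseteq C^{(n+1)}$, the least limit points $\lambda_\alpha$ of $C$, and the class $\Ce$ of structures $\mathcal{A}_\alpha=\langle V_{\lambda_\alpha},\in,\alpha,\lambda_\alpha,C\cap(\alpha+1)\rangle$ for $\alpha\in C$, exactly as in Theorem~\ref{vppin+1}, together with the verification that $\Ce$ is a $\Pi_{n+1}$-definable proper class. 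Here, by Theorem~\ref{theorem4} (since $C^{(n)+}$-extendibility implies $C^{(n)}$-extendibility) every $C^{(n)+}$-extendible cardinal reflects all $\Pi_{n+1}$ classes, so the leastness of $\kappa$ gives that \emph{no} cardinal $\le\kappa$ is $C^{(n)+}$-extendible; this is what makes $F$ honest on the relevant initial segment.

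Next I would invoke reflection. Choosing a sufficiently large $\beta\in C$ with $\beta>\kappa$ and setting $B=\mathcal{A}_\beta\in\Ce$, the reflection property of $\kappa$ yields $A=\mathcal{A}_\alpha\in\Ce\cap H_\kappa$ (so $\alpha<\lambda_\alpha<\kappa$) and an elementary embedding $j\colon\mathcal{A}_\alpha\to\mathcal{A}_\beta$. Since $j$ must send $\alpha$ to $\beta$, it is nontrivial, with critical point $\rho\le\alpha<\kappa$. Exactly as in Theorem~\ref{vppin+1} I would use the encoded club $C\cap(\alpha+1)$ together with Kunen's Theorem (\cite{Ku}) to rule out $\rho\notin C$, and then, using that $\lambda_\alpha,\lambda_\beta\in C^{(n+1)}$ and that $C$ is $\Pi_{n+1}$-definable, conclude $\rho,\,j(\rho)\in C$ and $F(\rho)<j(\rho)$. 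The restriction $j\restriction V_{F(\rho)}\colon V_{F(\rho)}\to V_{j(F(\rho))}$ then witnesses that $\rho$ is $F(\rho)$-$C^{(n)+}$-extendible, which (as $\rho<\kappa$ is not $C^{(n)+}$-extendible, so $F(\rho)$ is its genuine failure value) contradicts the very definition of $F(\rho)$.

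The two places where I expect the real work to lie, and where the present theorem departs from Theorem~\ref{vppin+1}, are the following. The first is \emph{definability}: the clean $\Pi_{n+1}$-definition of $F$ in Theorem~\ref{vppin+1} relied on there being no $C^{(n)+}$-extendibles anywhere, whereas such cardinals may exist above $\kappa$, so $F$ need not be total and the naive definition of $\Ce$ may exceed $\Pi_{n+1}$; I would resolve this by first running the argument to produce \emph{some} $C^{(n)+}$-extendible cardinal, letting $\theta$ be the least one (so $\theta\ge\kappa$ by Theorem~\ref{theorem4}), and then working below $\theta$, where $F$ is honest and $\Pi_{n+1}$, to force $\theta=\kappa$. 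The second, and I expect the harder, point is the \emph{transfer to $\kappa$ itself}: because every reflection embedding has critical point below $\kappa$, the witness produced directly concerns $\rho<\kappa$, and to conclude that $\kappa$ is $C^{(n)+}$-extendible one must show that the image $j(\rho)$ of the critical point is exactly $\kappa$ and upgrade the partial extendibility read off from $j$ to full $C^{(n)+}$-extendibility of $\kappa$ in $V$ — using the $\Pi_{n+2}$-expressibility of ``$x$ is $C^{(n)+}$-extendible'' together with the $C^{(n+2)}$-correctness of the relevant $V_{\lambda_\beta}$ (cf.\ Proposition~\ref{3.5}), and taking care of the extra clause $\mu\in C^{(n)}$ that distinguishes $C^{(n)+}$- from plain $C^{(n)}$-extendibility. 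This is the analogue of the step in Magidor's proof of Theorem~\ref{theoremMagidor} that boosts partial supercompactness of the critical point to full supercompactness of its image, and it is the crux of the argument; note that Lemma~\ref{Magidor} alone does not suffice here, since for $n\ge 1$ mere supercompactness of $j(\rho)$ would not force $j(\rho)\ge\kappa$, so the $F$-club encoding of $\Ce$ is exactly what is needed to keep the image of the critical point large and $C^{(n)}$-correct.
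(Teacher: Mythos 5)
Your adaptation of Theorem~\ref{vppin+1} only gets off the ground under the assumption that there are \emph{no} $C^{(n)+}$-extendible cardinals anywhere in $V$: that is exactly what makes $F$ total, $C$ a club proper class, and $\{\mathcal{A}_\alpha:\alpha\in C\}$ a $\Pi_{n+1}$ \emph{proper} class to which the reflection property of $\kappa$ may be applied. Run under that assumption, your argument (which is correct, including the contradiction at $\rho=crit(j)$ against the definition of $F(\rho)$) shows only that \emph{some} $C^{(n)+}$-extendible cardinal exists, hence a least one, $\theta\geq\kappa$. The genuine gap is your proposed repair of the remaining case $\theta>\kappa$. If $\theta>\kappa$, then $F(\theta)$ is undefined, so $C\subseteq\theta+1$ is bounded and your class of structures is a \emph{set}; ``working below $\theta$'' therefore produces nothing to which the hypothesis on $\kappa$ applies, since that hypothesis concerns proper classes only. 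Nor can $\theta$ be smuggled in as a parameter: $\theta\geq\kappa$, whereas reflection at $\kappa$ is for lightface classes (or parameters in $H_\kappa$). Because in your construction the height $\lambda_\alpha$ of $\mathcal{A}_\alpha$ is tied to its index $\alpha\in C$, properness of the class is \emph{equivalent} to totality of $F$, i.e., to global non-existence; no local modification rescues it. Separately, your ``second point'' (showing $j(\rho)=\kappa$ and upgrading $\rho$'s partial extendibility to full $C^{(n)+}$-extendibility of $\kappa$) is a red herring: the proof is by contradiction, so once $\rho$ is shown to be $F(\rho)$-$C^{(n)+}$-extendible one is done; neither the paper's proof nor a corrected version of yours contains any such transfer step.

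What the paper does instead --- and what is missing from your sketch --- is a redesign of the class of structures so that its properness follows from the \emph{local} hypothesis ``no ordinal $\leq\kappa$ is $C^{(n)+}$-extendible'' (which is what the minimality of $\kappa$, via Theorem~\ref{theorem4}, plus the contradiction hypothesis actually give you). The paper's structures are $\langle V_\xi,\in,\lambda,\alpha,C^{(n)}\cap\xi\rangle$ with $\alpha<\lambda<\xi$, where $\lambda\in C^{(n)}$ is a \emph{per-structure} witness to the clause that no ordinal $\leq\alpha$ is $\lambda$-$C^{(n)+}$-extendible. Since that failure clause is localized to ordinals $\leq\alpha$, one may fix $\alpha=\kappa$ and let $\xi$ range over arbitrarily large ordinals, so the class is proper no matter what exists above $\kappa$. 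The price is machinery with no counterpart in your sketch: an uncountable-cofinality requirement on $\xi$, a closure clause guaranteeing that suitable embeddings with prescribed images exist inside $V_\xi$ with targets that $V_\xi$ believes lie in $C^{(n)}$, and, after reflection produces $j$ into the structure indexed by $\kappa$, an iteration argument (some finite iterate $j^{m+1}(crit(j))$ must pass $\xi'$, by Kunen's Theorem) that composes the locally supplied embeddings to show $crit(j)$ is $\lambda'$-$C^{(n)+}$-extendible, contradicting the failure clause of the source structure. The iteration is forced on the paper because there the contradiction is against a \emph{fixed} $\lambda'$, which the image of the critical point need not clear in one application of $j$; in your setting a single restriction $j\restriction V_{F(\rho)}$ suffices precisely because the contradiction is against the least failure point $F(\rho)<j(\rho)$ --- but that is the very construction whose properness cannot be salvaged.
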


\begin{proof}
Suppose otherwise. Then by \ref{theorem4} there is no $C^{(n)}$-extendible, and therefore no $C^{(n)+}$-extendible, cardinal less than or equal to $\kappa$. For such a cardinal would reflect all $\Sigma_{n+2}$ (hence all $\Pi_{n+1}$) classes of structures, contradicting the minimality of $\kappa$.

Consider the class $\Ce$ of structures of the form $\langle V_\xi , \in , \lambda ,\alpha , C^{(n)}\cap \xi \rangle$, where $\alpha <\lambda <\xi$, and 
\begin{enumerate}
\item $\lambda \in C^{(n)}$ 
\item $\xi \in Lim(C^{(n)})$
\item the cofinality of $\xi$ is uncountable
\item
$\forall \beta <\xi \, \forall \mu (\exists j (j:V_\lambda \to V_\mu \wedge crit (j)=\alpha \wedge j(\alpha )=\beta )\to$
$\exists j'\exists \mu'(j':V_\lambda \to V_{\mu'} \wedge \mu'<\xi \wedge V_\xi \models ``\mu'\in C^{(n)}" \wedge crit(j')=\alpha \wedge j'(\alpha)=\beta)),$ and
\item $\lambda$ witnesses that no ordinal less than or equal to $\alpha$ is $\lambda$-$C^{(n)+}$-extendible.
\end{enumerate}
Clearly, $\Ce$ is a $\Pi_{n+1}$ definable proper class. So  there exists an elementary embedding 
$$j:\langle V_{\xi'},\in ,\lambda',\alpha', C^{(n)}\cap \xi'\rangle \to \langle V_\xi ,\in ,\lambda ,\kappa ,C^{(n)}\cap \xi \rangle$$
with both $\langle V_{\xi'},\in ,\lambda',\alpha' , C^{(n)}\cap \xi' \rangle$ and $\langle V_\xi ,\in ,\lambda ,\kappa ,  C^{(n)}\cap \xi \rangle$ in $\Ce$,  and with $\langle V_{\xi'},\in ,\lambda',\alpha' , C^{(n)}\cap \xi' \rangle$ of rank less than $\kappa$. So $\xi'<\kappa$.

Let $\alpha =crit(j)$. We claim that $\alpha \in C^{(n)}$. Otherwise, let $\gamma :=sup (C^{(n)}\cap \alpha )$. So $\gamma <\alpha$. Let $\delta \in C^{(n)}$ be the least such that $\gamma <\delta <\xi'$. Since $\delta$ is definable from $\gamma$ in $\langle V_{\xi'},\in ,\lambda',\alpha', C^{(n)}\cap \xi'\rangle$ and $j(\gamma )=\gamma$, also $j(\delta )=\delta$. Hence $j\restriction V_{\delta +2}: V_{\delta +2}\to V_{\delta +2}$ is elementary, contradicting Kunen's Theorem. 

 If $j^m(\alpha )<\xi'$ for all $m$, then $\{ j^m(\alpha )\}_{m\in \omega}\in V_{\xi'}$, because $\xi'$ has uncountable cofinality, contradicting Kunen's Theorem. So  for some $m$ we have $j^m(\alpha)<\xi'\leq j^{m+1}(\alpha)$.

We claim that there exists an elementary embedding $k:V_{\lambda'}\to V_\mu$, some $\mu\in C^{(n)}$, with $crit(k)=\alpha$ and $k(\alpha)=j^{m+1}(\alpha)$. We prove this by induction on $i\leq m$. For $i=0$ take $k=j\restriction V_{\lambda'}$. Now suppose it true for $i<m$. Since $j^{i+1}(\alpha)<\xi'$, by (4) above there exist $j'$ and $\mu'$ such that $j':V_{\lambda'}\to V_{\mu'}$ is elementary, $\mu'<\xi'$, $V_{\xi'}\models \mu'\in C^{(n)}$, $crit(j')=\alpha$, and $j'(\alpha )=j^{i+1}(\alpha)$. Notice that since $\xi\in C^{(n)}$ and, by the elementarity of $j$,  $V_\xi \models j(\mu')\in C^{(n)}$, it follows that  $j(\mu')\in C^{(n)}$. 
Composing $j'$ with $j$ we now have  that $k:= (j\circ j'):V_{\lambda'}\to V_{j(\mu')}$ is elementary,  has critical point $\alpha$, and $k(\alpha)=j^{i+2}(\alpha)$, as desired. 

Note that since $\alpha ,\xi',\xi\in C^{(n)}$, it follows that $j^{m+1}(\alpha)\in C^{(n)}$. Thus, $k$ witnesses that $\alpha$ is $\lambda'$-$C^{(n)+}$-extendible, contradicting (5) above.
\end{proof}

Observe that if $\kappa$ is the least $C^{(n)}$-extendible cardinal, then by Theorem \ref{theorem4} it reflects all $\mathbf{\Sigma_{n+2}}$ classes of structures. Hence, by the theorem above, $\kappa$ is the least cardinal that does this, and therefore $\kappa$ is $C^{(n)+}$-extendible. %Moreover, we obtain the following characterization of the first $C^{(n)}$-extendible cardinal.

\begin{corollary}
The following are equivalent for each $n\geq 1$:
\begin{enumerate}
\item $\kappa$ is the least $C^{(n)}$-extendible cardinal.
\item $\kappa$ is the least cardinal that reflects all $\Sigma_{n+2}$ definable, with parameters in $V_\kappa$, classes of structures of the same type. I.e., $\kappa$ is the least ordinal for which $VP(\kappa , \mathbf{\Sigma_{n+2}})$ holds.
\item $\kappa$ is the least cardinal that reflects all $\Pi_{n+1}$  proper classes of  structures of type $\langle V_\alpha ,\in ,A\rangle$, where $A$ is a unary predicate.
\end{enumerate}
\end{corollary}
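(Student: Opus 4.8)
The plan is to observe that each of (1), (2), (3) asserts that $\kappa$ is the \emph{least} cardinal enjoying a certain reflection or extendibility property, so it suffices to show that the three resulting least cardinals coincide. Write $\lambda_1,\lambda_2,\lambda_3$ for the least cardinal satisfying the defining condition of (1), (2), (3) respectively; I would close the loop $\lambda_2\le\lambda_1\le\lambda_3\le\lambda_2$, whence all three are equal and the equivalence of (1)--(3) follows at once, since each condition says exactly that $\kappa$ equals the corresponding least cardinal.

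The two easy inequalities come from the inclusion $\Pi_{n+1}\subseteq\Sigma_{n+2}$ and from Theorem \ref{theorem4}. For $\lambda_2\le\lambda_1$: the least $C^{(n)}$-extendible cardinal $\lambda_1$ lies in $C^{(n)}\subseteq C^{(1)}$, so $V_{\lambda_1}=H_{\lambda_1}$, and by Theorem \ref{theorem4} it reflects every $\Sigma_{n+2}$-definable (with parameters in $V_{\lambda_1}$) class of structures of the same type; thus $\lambda_1$ witnesses the defining property of $\lambda_2$. For $\lambda_3\le\lambda_2$: every $\Pi_{n+1}$ formula is in particular $\Sigma_{n+2}$, so a cardinal reflecting all $\Sigma_{n+2}$ classes a fortiori reflects all $\Pi_{n+1}$ proper classes of the restricted type $\langle V_\alpha,\in,A\rangle$; hence $\lambda_2$ witnesses the defining property of $\lambda_3$.

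The inequality $\lambda_1\le\lambda_3$ carries the real content, and I would extract it from Theorem \ref{reflectionextendibles}. The point is that the proof of that theorem never invokes reflection of \emph{arbitrary} $\Pi_{n+1}$ classes: it produces a single $\Pi_{n+1}$-definable (parameter-free) proper class $\Ce$ of structures $\langle V_\xi,\in,\lambda,\alpha,C^{(n)}\cap\xi\rangle$ and derives $C^{(n)+}$-extendibility solely from the hypothesis that $\kappa$ reflects $\Ce$, together with the fact that no $C^{(n)}$-extendible cardinal lies strictly below $\kappa$ (which itself follows from Theorem \ref{theorem4} and the minimality of $\kappa$). Since $\Ce$ carries only the relation $\in$, two ordinal constants, and one unary predicate, I would recode it as a class of structures of type $\langle V_\xi,\in,A\rangle$ by packing the data into one predicate, for instance $A=\{\langle 0,\lambda\rangle,\langle 1,\alpha\rangle\}\cup\{\langle 2,\beta\rangle:\beta\in C^{(n)}\cap\xi\}$, from which $\lambda$, $\alpha$, and $C^{(n)}\cap\xi$ are first-order recoverable, under which elementary embeddings between the recoded structures correspond exactly to elementary embeddings between the original expanded structures, and which keeps the class $\Pi_{n+1}$-definable. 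Then $\lambda_3$, being least to reflect all $\Pi_{n+1}$ proper classes of type $\langle V_\alpha,\in,A\rangle$, reflects this recoded $\Ce$, and the argument of Theorem \ref{reflectionextendibles} applies verbatim to show $\lambda_3$ is $C^{(n)+}$-extendible, hence $C^{(n)}$-extendible, giving $\lambda_1\le\lambda_3$.

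I expect the main obstacle to be precisely the verification in the third paragraph that the proof of Theorem \ref{reflectionextendibles} localizes to the single type $\langle V_\alpha,\in,A\rangle$: one must check that absorbing the two constants and the $C^{(n)}$-predicate into one unary predicate disturbs neither the $\Pi_{n+1}$-definability of the class nor the correspondence between elementary embeddings of the recoded and the original structures. Everything else is bookkeeping with the inclusion $\Pi_{n+1}\subseteq\Sigma_{n+2}$ and the minimality clauses, and combining the three inequalities yields $\lambda_1=\lambda_2=\lambda_3$, completing the proof.
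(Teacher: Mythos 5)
Your proposal is correct and takes essentially the same route as the paper's own proof: the easy reductions come from Theorem \ref{theorem4} together with the inclusion of $\Pi_{n+1}$ classes among the $\mathbf{\Sigma}_{n+2}$ ones, and the substantive direction comes from observing that the proof of Theorem \ref{reflectionextendibles} uses reflection of only one particular $\Pi_{n+1}$ class, whose constants and predicate the paper likewise codes as a single subset of $V_\xi$ to fit the type $\langle V_\alpha,\in,A\rangle$. Your packaging of the argument as a cycle of inequalities $\lambda_2\le\lambda_1\le\lambda_3\le\lambda_2$ among the three least cardinals is only a cosmetic reorganization of the paper's direct chain of implications.
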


\begin{proof}
If $\kappa$ is a $C^{(n)}$-extendible cardinal, then by Theorem \ref{theorem4} $VP(\kappa ,\mathbf{\Sigma_{n+2}})$ holds, and therefore $\kappa$ reflects all $\Pi_{n+1}$  proper classes of  structures of type $\langle V_\alpha ,\in ,A\rangle$, where $A$ is a unary predicate. Now, the proof of Theorem \ref{reflectionextendibles} shows that if $\kappa$ is the least cardinal that reflects a particular  $\Pi_{n+1}$ definable proper class $\Ce$ of structures of the form $\langle V_\xi , \in , \lambda ,\alpha , C^{(n)}\cap \xi \rangle$, where $\alpha <\lambda <\xi$, then $\kappa$ is $C^{(n)+}$-extendible, and therefore $C^{(n)}$-extendible. Thus, since the triple $\langle \lambda ,\alpha , C^{(n)}\cap \xi\rangle$ can be easily coded as a subset of $V_\xi$, the equivalence of (1), (2), and (3) follows immediately from Theorem \ref{reflectionextendibles}.
\end{proof}

The following parameterized version also follows.

\begin{theorem}
\label{reflectionextendiblesbold}
A cardinal $\kappa$ reflects all $\mathbf{\Pi}_{n+1}$ (proper) classes of structures of the same type if and only if either $\kappa$ is a $C^{(n)}$-extendible cardinal or a limit of $C^{(n)}$-extendible cardinals.
\end{theorem}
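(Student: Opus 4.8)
The plan is to prove both directions of the biconditional, mirroring the structure of Theorem \ref{reflectionextendibles} and its corollary, but now with parameters allowed in the definitions of the classes. First I would handle the easier direction: if $\kappa$ is a $C^{(n)}$-extendible cardinal or a limit of such cardinals, then $\kappa$ reflects all $\mathbf{\Pi}_{n+1}$ classes. For the case where $\kappa$ is itself $C^{(n)}$-extendible, this follows directly from Theorem \ref{theorem4}, since reflecting $\mathbf{\Sigma}_{n+2}$ classes (with parameters in $H_\kappa = V_\kappa$) a fortiori gives reflection of $\mathbf{\Pi}_{n+1}$ classes. For the limit case, I would observe that the property of reflecting a fixed $\mathbf{\Pi}_{n+1}$ class $\Ce$ is closed under limits: if $\kappa = \sup_i \kappa_i$ with each $\kappa_i$ a $C^{(n)}$-extendible cardinal above the rank of the parameters of $\Ce$, then for any $B \in \Ce$, some $\kappa_i$ reflects $\Ce$, producing $A \in \Ce \cap H_{\kappa_i} \subseteq \Ce \cap H_\kappa$ elementarily embeddable into $B$. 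The only subtlety is ensuring the parameters of the given $\mathbf{\Pi}_{n+1}$ definition lie in $H_{\kappa_i}$ for cofinally many $i$, which holds since $\kappa$ is a limit of the $\kappa_i$.

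Next I would treat the harder direction: if $\kappa$ reflects all $\mathbf{\Pi}_{n+1}$ proper classes, then $\kappa$ is either $C^{(n)}$-extendible or a limit of $C^{(n)}$-extendible cardinals. I would adapt the argument of Theorem \ref{reflectionextendibles}, which is stated for the \emph{least} cardinal reflecting a \emph{lightface} $\Pi_{n+1}$ class. The key move is that reflecting all $\mathbf{\Pi}_{n+1}$ classes is a strong hypothesis, so if $\kappa$ is not itself a limit of $C^{(n)}$-extendible cardinals, I would show it must be $C^{(n)}$-extendible outright. Specifically, assuming the set of $C^{(n)}$-extendible cardinals below $\kappa$ is bounded by some $\eta < \kappa$, I would set up a $\mathbf{\Pi}_{n+1}$ class with $\eta$ as an additional parameter, analogous to the class $\Ce$ of structures $\langle V_\xi, \in, \lambda, \alpha, C^{(n)} \cap \xi\rangle$ in the proof of Theorem \ref{reflectionextendibles}, but now restricting attention to $\alpha > \eta$. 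The elementary embedding $j$ provided by reflection then has critical point $\kappa' > \eta$, and the clause-by-clause analysis — showing $\operatorname{crit}(j) \in C^{(n)}$ via Kunen's Theorem, then iterating $j$ finitely often and using clause (4) to build an embedding $k: V_{\lambda'} \to V_\mu$ with $\mu \in C^{(n)}$ witnessing $C^{(n)+}$-extendibility of the critical point — goes through exactly as before, forcing a $C^{(n)}$-extendible cardinal into the interval $(\eta, \kappa]$, a contradiction unless $\kappa$ itself is that cardinal.

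The main obstacle will be organizing the case distinction cleanly so that the reflection hypothesis yields the dichotomy rather than just a single conclusion. In Theorem \ref{reflectionextendibles} the minimality of $\kappa$ was used twice: once to conclude there are no $C^{(n)}$-extendible cardinals below or equal to $\kappa$, and once inside the final contradiction. Here, since $\kappa$ need not be least, I must instead argue that the presence of the parameter $\eta$ confines the newly produced $C^{(n)}$-extendible cardinal to lie in $(\eta, \kappa]$; combined with the assumption that $\eta$ bounds the $C^{(n)}$-extendibles below $\kappa$, this forces the cardinal to equal $\kappa$. I expect the verification that the modified class (with the extra parameter $\eta$ coded into the structures, say as $\{\gamma\}_{\gamma \le \eta}$) remains $\mathbf{\Pi}_{n+1}$ definable and that the iteration argument with clause (4) survives the parameterization to be routine, following the parameterized adaptation already sketched after Theorem \ref{vppin+1}. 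The genuinely delicate point is confirming that $j$ fixes all ordinals up to $\eta$ and hence $\operatorname{crit}(j) > \eta$, which is what separates the two horns of the dichotomy.
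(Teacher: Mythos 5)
Your proposal is correct and takes essentially the same approach as the paper: the forward direction is Theorem \ref{theorem4} together with closure of the reflection property under limits, and the converse runs the argument of Theorem \ref{reflectionextendibles} on the class of structures $\langle V_\xi ,\in ,\lambda ,\alpha , C^{(n)}\cap \xi , \{ \eta'\}_{\eta' \leq \eta}\rangle$ with clause (5) relativized to ordinals above $\eta$, the added constants forcing $crit(j)>\eta$. The paper merely organizes the endgame as a single reductio---assuming $\kappa$ is neither $C^{(n)}$-extendible nor a limit of such, the constructed witness to $\lambda'$-$C^{(n)+}$-extendibility of $crit(j)$ contradicts clause (5)---which is exactly your dichotomy stated contrapositively.
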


\begin{proof}
It is clear that the property of reflecting $\mathbf{\Pi_{n+1}}$ classes of structures is closed under limits. So if $\kappa$ is a $C^{(n)}$-extendible cardinal or a limit of $C^{(n)}$-extendible cardinals, then Theorem \ref{theorem4} implies that  $\kappa$ reflects all $\mathbf{\Pi_{n+1}}$ classes of structures. The other direction can be proved similarly as in  Theorem  \ref{reflectionextendibles}. For suppose $\kappa$ reflects all $\mathbf{\Pi}_{n+1}$ (proper) classes of structures and is neither $C^{(n)}$-extendible nor a limit of $C^{(n)}$-extendible cardinals. Then for some ordinal $\eta <\kappa$ there is no $C^{(n)}$-extendible cardinal greater than $\eta$ and less than or equal to $\kappa$. 

Consider the class $\Ce$ of structures of the form $\langle V_\xi , \in , \lambda ,\alpha , C^{(n)}\cap \xi , \{ \eta'\}_{\eta' \leq \eta}\rangle$, where $\eta <\alpha <\lambda <\xi$, which satisfy (1)-(4) from the proof of Theorem \ref{reflectionextendibles} and also 
\begin{enumerate}
%\item $\lambda \in C^{(n)}$ 
%\item $\xi \in Lim(C^{(n)})$
%\item the cofinality of $\xi$ is uncountable
%\item
%$\forall \beta <\xi \, \forall \mu (\exists j (j:V_\lambda \to V_\mu \wedge crit (j)=\alpha \wedge j(\alpha )=\beta )\to$
%$\exists j'\exists \mu'(j':V_\lambda \to V_{\mu'} \wedge \mu'<\xi \wedge V_\xi \models ``\mu'\in C^{(n)}" \wedge crit(j')=\alpha \wedge j'(\alpha)=\beta)),$ and
\item[(5)] $\lambda$ witnesses that no ordinal less than or equal to $\alpha$ and greater than $\eta$ is $\lambda$-$C^{(n)}$-extendible.
\end{enumerate}
Clearly, $\Ce$ is a $\Pi_{n+1}$ definable proper class, with $\eta$ as a parameter. So  there exists an elementary embedding 
$$j:\langle V_{\xi'},\in ,\lambda',\alpha', C^{(n)}\cap \xi', \{ \eta'\}_{\eta' \leq \eta} \rangle \to \langle V_\xi ,\in ,\lambda ,\kappa ,C^{(n)}\cap \xi, \{ \eta'\}_{\eta' \leq \eta}\rangle$$
with both structures in $\Ce$,  and with $\langle V_{\xi'},\in ,\lambda',\alpha' , C^{(n)}\cap \xi', \{ \eta'\}_{\eta' \leq \eta} \rangle$ of rank less than $\kappa$. So $\xi'<\kappa$ and $\eta <crit(j)$. The rest of the proof proceeds now as in Thereom \ref{reflectionextendibles}.
\end{proof}

We finish this section with the following observation.  Suppose $n\geq 1$. Given a $\Sigma_{n+1}$ definable class of structures $\mathcal{C}$, say via the $\Sigma_{n+1}$ formula $\varphi (x)$, let $\mathcal{C}^\ast$ be the class of structures of the form $A^\ast =\langle V_\alpha , \in ,\alpha ,A\rangle$, where $\alpha$ is the least ordinal in $C^{(n)}$ such that $V_\alpha \models \varphi (A)$. If $A\in \Ce$, then such an $\alpha$ exists, because the set of ordinals $\alpha$ such that $V_\alpha \models \varphi (A)$ is club. Conversely, if $\langle V_\alpha ,\in, \alpha, A\rangle \in \Ce^\ast$, then $V_\alpha \models \varphi (A)$ and $\alpha \in C^{(n)}$, which implies that $\varphi (A)$ holds in $V$, and so $A\in \Ce$. Thus, we have that 
$$A\in \mathcal{C}\mbox{  if and only if  }A^\ast \in \mathcal{C}^\ast.$$ 
Now notice that $\mathcal{C}^\ast$ is $\Pi_n$ definable. This explains why, e.g., $VP(\Pi_n)$ is equivalent to $VP(\Sigma_{n+1})$, or why a cardinal reflects $\Pi_n$ classes if and only if it reflects $\Sigma_{n+1}$ classes.

\section{$C^{(n)}$-supercompact cardinals}

Let us consider next the $C^{(n)}$-cardinal form of supercompactness.

\begin{definition}
\label{definitionCnsupercompact}
If  $\kappa$  is a cardinal and $\lambda >\kappa$, we say that $\kappa$ is \emph{$\lambda$-$C^{(n)}$-supercompact} if there is an elementary embedding $j:V\to M$, with $M$ transitive, such that $crit(j)=\kappa$, $j(\kappa)>\lambda$, $M$ is closed under $\lambda$-sequences,  and $j(\kappa)\in C^{(n)}$.

We say that $\kappa$ is  \emph{$C^{(n)}$-supercompact} if it is $\lambda$-$C^{(n)}$-supercompact for every  $\lambda >\kappa$.

%Finally, we say that $\kappa$ is \emph{$C^{(\infty)}$-supercompact} if it is $C^{(n)}$-supercompact, for every $n$.
\end{definition}

If $\kappa$ is $C^{(n)}$-superhuge (see Section \ref{sectionhuge}), then $\kappa$ is $C^{(n)}$-supercompact. Thus, it follows  from Proposition \ref{BDPT} below  that if $\kappa$ is $C^{(n)}$-$2$-huge, then there is a $\kappa$-complete normal ultrafilter $\mathcal{U}$ over $\kappa$ such that $\{ \alpha <\kappa : V_\kappa \models ``\alpha $ is $C^{(n)}$-supercompact$"\}\in \mathcal{U}$.

\medskip

The notion of $\lambda$-$C^{(n)}$-supercompact\-ness, unlike $\lambda$-supercompactness, cannot be formulated in terms of normal measures on $\mathcal{P}_\kappa(\lambda)$. The problem is that if $j:V\to M$ is an ultrapower embedding coming from such a measure, then $2^{\lambda^{<\kappa}} <j(\kappa) < (2^{\lambda^{<\kappa}})^+$ (see \cite{K}, 22.11), and so $j(\kappa)$ is not a cardinal. So, in order to formulate the notion of $\lambda$-$C^{(n)}$-supercompactness in the first-order language of set theory we will make use of \emph{long} extenders\footnote{I want to thank Ralf Schindler for illuminating discussions on long extenders.} having as support a sufficiently rich transitive set  (see \cite{MS} for a presentation of \emph{short} extenders of this kind). 

So  suppose $j:V\to M$ witnesses that $\kappa$ is $\lambda$-$C^{(n)}$-supercompact. Let $Y$ be a transitive subset of $M$ which contains $j\restriction \lambda$, is closed under sequences of length $\leq \lambda$,  and is closed under $j$. Let $\zeta$ be the least ordinal such that $Y\subseteq j(V_\zeta )$. For each $a\in [Y]^{<\omega}:=\{ x \subseteq Y: x\mbox{ is finite}\}$, let $E_a$ be  defined by:
$$X\in E_a \mbox{ if and only if }X\subseteq (V_{\zeta})^a \mbox{ and }j^{-1}\restriction j(a)\in j(X).$$
Note that the function $j^{-1}\restriction j(a) : j(a)\to a$  is an $\in$-isomorphism  that sends $j(x)$ to $x$, for every $x\in a$.

It is not difficult to check that  the sequence $E:=\langle E_a:a\in [Y]^{<\omega}\rangle$ is an \emph{extender over $V_\zeta$ with critical point $\kappa$ and  support $Y$}, that is,
\begin{enumerate}
\item Each $E_a$ is a $\kappa$-complete ultrafilter over $(V_{\zeta})^a$, and $E_{\{ \kappa \}}$ is not $\kappa^+$-complete.
\item If $a\subseteq b$ and $X\in E_a$, then $\{ f\in (V_{\zeta})^b:f\restriction a\in X\}\in E_b$.
\item For every $a$, the set $\{f:a\to range(f) : f$ is an $\in$-isomorphism$\}$ belongs to $E_a$.
\item If $F:(V_{\zeta})^a\to V$ is such that $\{ f: F(f)\in \bigcup (range(f))\}\in E_a$, then there is $z\in Y$ such that $\{ f\in (V_{\zeta})^{a\cup \{ z\}}: F(f\restriction a)=f(z)\}\in E_{a\cup \{ z\}}$.
\item The ultrapower $Ult(V, E)$ is well-founded.
\end{enumerate}
To check (5), observe that the map $k:Ult(V,E)\to M$ given by $k([a, [f]])=j(f)(j^{-1}\restriction j(a))$ is an elementary embedding. 

If $j_E:V\to M_E\cong Ult(V,E)$ is the corresponding ultrapower embedding, then $j=k\circ j_E$. Moreover,  $Y\subseteq M_E$   and $k$ is the identity on $Y$  (see \cite{MS} for details). Since $Y$ was assumed to be closed under $j$, it easily follows that $Y$ is also closed under $j_E$.  Hence, $j\restriction Y=j_E\restriction Y$. For if $y\in Y$, then $j(y)=k(j_E(y))=j_E(y)$. In particular, $\kappa = crit(j_E)$ and $j(\kappa)=j_E(\kappa)$. 

We claim that $j_E$ witnesses the $\lambda$-$C^{(n)}$-supercompactness of $\kappa$, for which it only remains  to check that $M_E$ is closed under $\lambda$-sequences.
First note that 
$$M_E=\{ j_E(f)(j^{-1}\restriction j(a)): a\in [Y]^{<\omega} \mbox{ and } f:(V_{\zeta})^a\to V\}.$$
For if $x=[a,[f]]\in M_E$, then writing $s$ for $j^{-1}\restriction j(a)$ and noticing that  $k(s)=s$, because $s\in Y$, we have
$$k(x)=j(f)(s)=k\circ j_E(f)(s)=k(j_E(f))(k(s))=k(j_E(f)(s)) $$
and since $k$ is one-to-one we have that $x=j_E(f)(s)$.

Now fix $j_E(f_i)(j^{-1}\restriction j(a_i))$, for $i<\lambda$. Let $f=\langle f_i:i<\lambda\rangle$,  let $c$ be  $j_E\restriction \lambda$, and let $d=\langle j^{-1}\restriction j(a_i) :i<\lambda \rangle$ . Notice that since $Y$ is closed under $j$ and under $\lambda$ sequences, $d\in Y$. Set $b =\{ c,d\}$ and let $F:(V_{\zeta})^{b}\to V$ be defined as follows:
if $s=\{ s_c,s_d\} \in (V_{\zeta})^{b}$ is such that $s_c$ and $s_d$ are functions with the same ordinal $\alpha$ as their domain, then $F(s)$ is the function $g$ with domain $\alpha$ such that $g(i)= f(s_c(i))(s_d(i))$, whenever $s_c(i)$ is an ordinal and $s_d(i)\in (V_{\zeta})^{a_{s_c(i)}}$, and $g(i)=0$  otherwise. Otherwise, $F(s)=0$. 
Then, noticing that $j^{-1}\restriction j(b)$ maps $j(c)$ to $j_E\restriction \lambda$ and $j(d)$ to $d$, we have:
$$j_E(F)(j^{-1}\restriction j(b))(i)=j_E(f)(j_E(i))(j^{-1}\restriction j(a_i))=j_E(f_i)(j^{-1}\restriction j(a_i))$$
and so $\langle j_E(f_i)(j^{-1}\restriction j(a_i)): i<\lambda   \rangle =j_E(F)(j^{-1}\restriction j(b)) \in M_E$.

\medskip

Conversely, suppose $Y$ is transitive and $E=\langle E_a:a\in [Y]^{<\omega}\rangle$ is an extender over some $V_\zeta$ with critical point $\kappa$ and support $Y$. If $j_E:V\to M_E \cong Ult(V,E)$ is the corresponding ultrapower embedding, then $crit(j_E)=\kappa$ and $Y \subseteq M_E$ (see \cite{MS}, Lemmas 1.4 and 1.5).
Moreover, if $[a,[f]]\in M_E$, then
$$j_E(f)(j_E^{-1}\restriction j_E(a))=[a, [c^a_f]]([a, [Id_{(V_\zeta)^a}]])=[a, [f]]$$
where $c^a_f: (V_\zeta )^a\to V$ is the constant function with value $f$, and $Id_{(V_\zeta)^a}:(V_\zeta )^a\to V$ is the identity function. Hence, 
$$M_E=\{ j_E(f)(j_E^{-1}\restriction j_E(a)): a\in [Y]^{<\omega} \mbox{ and } f:(V_\zeta)^a\to V\}.$$ 
So if $Y$  is closed under $j_E$ and under $\lambda$ sequences, then  one can show, as above, that $M_E$ is closed under $\lambda$ sequences.

% 
%This means that  each $E_a$ is a $\kappa$-complete ultrafilter over $(V_\kappa)^a$, and at least one $E_a$ is not $\kappa^+$-complete. 

Notice that for every $\mu \in C^{(1)}$ and every  $\kappa ,\zeta , Y, E\in V_\mu$, we have:  $E$ is an extender over $V_\zeta$ with critical point $\kappa$ and support $Y$ if and only if $V_\mu \models ``E$ is an extender over $V_\zeta$ with critical point $\kappa$ and support $Y$". Moreover, $(j_E(\kappa))^{V_\mu}=j_E(\kappa)$.

Thus, for $n\geq 1$, $\kappa$ is $\lambda$-$C^{(n)}$-supercompact if and only if
$$\exists \mu\exists E \exists Y \exists \zeta ( \mu \in C^{(n)}\wedge \lambda , E, Y \in V_\mu \;\wedge Y\mbox{ is transitive }\wedge [Y]^{\leq \lambda} \subseteq Y  \wedge$$ $$V_\mu \models ``E\mbox{ is an extender over $V_\zeta$ with critical point $\kappa$ and support $Y$} \wedge$$ $$  j_E[Y]\subseteq Y  \wedge j_E(\kappa)>\lambda  \wedge j_E(\kappa)\in C^{(n)}").$$

It follows that, for $n\geq 1$, ``$\kappa$ is $\lambda$-$C^{(n)}$-supercompact" is $\Sigma_{n+1}$ expressible. Hence, ``$\kappa$ is $C^{(n)}$-supercompact" is $\Pi_{n+2}$ expressible.

\medskip

Thus, for $n\geq 1$,  if $\kappa$ is $C^{(n)}$-supercompact and $\alpha$ is any ordinal in  $C^{(n+1)}$ greater than $\kappa$, then $V_\alpha \models ``\kappa$ is $C^{(n)}$-supercompact".
Moreover, since for every $n$,  ``$\exists \kappa (\kappa$ is $C^{(n)}$-supercompact$)$" is $\Sigma_{n+3}$ expressible, the first $C^{(n)}$-supercompact cardinal does not belong to $C^{(n+3)}$. But we don't know if, e.g., the first $C^{(1)}$-supercompact cardinal belongs to $C^{(3)}$. We don't know either if the $C^{(n)}$-supercompact cardinals form a hierarchy in a strong sense, that is, if the first $C^{(n)}$-supercompact cardinal is smaller than the first $C^{(n+1)}$-supercompact cardinal, for all $n$. 

%The next Proposition shows, however, that $C^{(n)}$-supercompact cardinals do form a hierarchy in the weaker sense of consistency strength.

%\begin{proposition}
%For every $n$, if the existence of a $C^{(n)}$-supercompact is consistent, then so is that the first $C^{(n)}$-supercompact cardinal  is not $C^{(n+1)}$-supercompact.
%\end{proposition}

%\begin{proof}
%Suppose $\kappa$ is the first $C^{(n)}$-supercompact cardinal in $V$. Let $T$ be a finite fragment of ZFC which, without loss of generality, we may assume it proves the L\"owenheim-Skolem Theorem, the Mostowski's Collapse Theorem and  the Reflection Theorem for $\Sigma_{10}$ formulas. Let $M$ be a transitive model of $T$ that contains $\kappa$, it  satisfies that $\kappa$ is $C^{(n)}$-supercompact, and $OR\cap M$ is the least possible.

%We claim that $M$ satisfies that $\kappa$ is not $C^{(n+1)}$-supercompact. Otherwise, in $M$, there is $\lambda$ greater than $\kappa$ which belongs to $C^{(1)}$. By the Reflection Theorem, there is $\alpha$ such that $M_\alpha$ is a model of $T$ and satisfies that $\kappa$ is $C^{(n)}$-supercompact. Let $N\in M$ be an elementary submodel of $M_\alpha$ of cardinality less than $\lambda$, and  with $\kappa +1\subseteq N$, and let $\bar{N}$ be its transitive collapse. Then $\bar{N}$ is a model of $T$ that satisfies that $\kappa$ is $C^{(n)}$-supercompact. Moreover, since $M_\lambda =H_\lambda$, we have that $\bar{N}\in V_\lambda$. Hence $OR\cap \bar{N} <\lambda$, contradicting the minimality of $OR\cap M$.
%\end{proof}

Every extendible cardinal is supercompact, and the first extendible is much greater than the first supercompact  (see \cite{K}), but we don't know if, for $n\geq 1$, every $C^{(n)}$-extendible cardinal is $C^{(n)}$-supercompact, or if the first $C^{(n)}$-extendible cardinal is actually greater than the first $C^{(n)}$-supercompact cardinal. However,  since every $C^{(n)}$-extendible cardinal belongs to $C^{(n+2)}$ (Proposition \ref{3.5}), the first $C^{(n)}$-supercompact cardinal is smaller than the first $C^{(n+1)}$-extendible cardinal, assuming both exist. 

So far, the only upper bound we know, in the usual large cardinal hierarchy, on the consistency strength of the existence of $C^{(n)}$-supercompact cardinals, for $n\geq 1$, is the existence of an $E_0$ cardinal (see Section \ref{lastsection}).

\section{$C^{(n)}$-huge and  $C^{(n)}$-superhuge cardinals}
\label{sectionhuge}

Recall that a cardinal $\kappa$ is  \emph{$m$-huge}, for $m\geq 1$,  if it is the critical point of an elementary embedding $j:V\to M$ with $M$ transitive and closed under $j^m(\kappa)$-sequences, where $j^m$ is the $m$-th iterate of $j$. A cardinal is called \emph{huge} if it is $1$-huge.

\begin{definition}
We say that a  cardinal $\kappa$ is \emph{$C^{(n)}$-$m$-huge} ($n\geq 1$) if it is $m$-huge, witnessed by $j$,  with $j(\kappa)\in C^{(n)}$.  We say that $\kappa$ is \emph{$C^{(n)}$-huge} if it is $C^{(n)}$-$1$-huge.
\end{definition}

In contrast with $C^{(n)}$-supercompact cardinals, which do not admit a characterization in terms of ultrafilters, but only in terms of long extenders, $C^{(n)}$-$m$-huge cardinals can be characterized in terms of normal ultrafilters. To wit: $\kappa$ is $C^{(n)}$-$m$-huge if and only if it is uncountable and there is a $\kappa$-complete fine and normal ultrafilter $\mathcal{U}$ over some $\mathcal{P}(\lambda)$ and cardinals $\kappa =\lambda_0 <\lambda_1 <\ldots  <\lambda_m=\lambda$, with $\lambda_1 \in C^{(n)}$, and such that for each $i<m$,
$$\{ x\in \mathcal{P}(\lambda ): ot(x\cap \lambda_{i+1})=\lambda_i\}\in \mathcal{U}.$$
(See \cite{K}, 24.8 for a proof of the case $n=1$, which also works for arbitrary $n$.) It follows that  ``$\kappa$ is $C^{(n)}$-$m$-huge" is $\Sigma_{n+1}$ expressible.

Clearly, every huge cardinal is $C^{(1)}$-huge.  But the first huge cardinal is not $C^{(2)}$-huge. For suppose $\kappa$ is the least huge cardinal and $j:V\to M$ witnesses that $\kappa$ is $C^{(2)}$-huge. Then since ``$x$ is huge" is $\Sigma_{2}$ expressible, we have $$V_{j(\kappa)}\models ``\kappa\mbox{ is huge}".$$ Hence, since $(V_{j(\kappa)})^M=V_{j(\kappa)}$, 
$$M\models ``\exists \delta <j(\kappa)(V_{j(\kappa)}\models ``\delta \mbox{ is huge}")".$$
By elementarity, there is a huge cardinal less than $\kappa$ in $V$, which is absurd.

A similar argument, using that for $n\geq 1$ ``$\kappa$ is $C^{(n)}$-$m$-huge" is $\Sigma_{n+1}$ expressible,  for all $m$, shows that the first $C^{(n)}$-$m$-huge cardinal is not $C^{(n+1)}$-huge, for all $m, n\geq 1$.

%Note that if $\kappa$ is $C^{(n)}$-huge, then $\kappa\in C^{(n)}$. For suppose $j:V\to M$ witnesses the $C^{(n)}$-hugeness of $\kappa$, Then since $j\restriction V_{j(\kappa)}:V_{j(\kappa)}\to V_{j^2(\kappa)}$ belongs to $M$, we have that $M\models ``\kappa$ is $j(\kappa)$-$C^{(n)}$-extendible". Hence, the set of $\alpha <\kappa$ that are $\kappa$-$C^{(n)}$-extendible is unbounded below $\kappa$.

\medskip

Analogous considerations can be made in the case of almost-huge cardinals. Recall that a cardinal $\kappa$ is  \emph{almost-huge}  if it is the critical point of an elementary embedding $j:V\to M$ with $M$ transitive and closed under $\gamma$-sequences, for every $\gamma <j(\kappa)$.
Thus, we say that a cardinal $\kappa$ is \emph{$C^{(n)}$-almost-huge} if it is almost-huge, witnessed by an embedding $j$ with $j(\kappa)\in C^{(n)}$.

$C^{(n)}$-almost-huge cardinals can also be  characterized in terms of normal ultrafilters. To wit: $\kappa$ is $C^{(n)}$-almost-huge if and only if there exist an inaccessible $\lambda \in C^{(n)}$ greater than $\kappa$ and a coherent sequence of  normal ultrafilters $\langle \mathcal{U}_\gamma : \kappa \leq \gamma <\lambda\rangle$ over $\mathcal{P}_\kappa (\gamma)$ such that the corresponding embeddings $j_\gamma :V\to M_\gamma \cong Ult (V,\mathcal{U}_\gamma)$ and $k_{\gamma ,\delta}:M_\gamma \to M_\delta$ satisfy: if $\kappa\leq \gamma <\lambda$ and $\gamma \leq \alpha <j_\gamma (\kappa)$, then there exists $\delta$ such that $\gamma \leq \delta <\lambda$ and $k_{\gamma ,\delta} (\alpha )=\delta$. (See \cite{K}, 24.11 for details.) It follows that for $n\geq 1$, ``$\kappa$ is $C^{(n)}$-almost-huge" is $\Sigma_{n+1}$ expressible. Now similar arguments as in the case of $C^{(n)}$-huge cardinals show that for $n\geq 1$, the first $C^{(n)}$-almost-huge cardinal is not $C^{(n+1)}$-almost-huge.
 
Clearly, if $\kappa$ is $C^{(n)}$-huge, then it is $C^{(n)}$-almost-huge. Moreover,  similarly as in Proposition \ref{prop3}, one can show that there is a $\kappa$-complete normal ultrafilter $\mathcal{U}$ over $\kappa$ such that the set of $C^{(n)}$-almost-huge cardinals below $\kappa$ belongs to $\mathcal{U}$. Notice also that every $C^{(n)}$-almost-huge cardinal is $C^{(n)}$-superstrong, and therefore belongs to $C^{(n)}$. Hence, since being $C^{(n)}$-huge is $\Sigma_{n+1}$ expressible, the first $C^{(n)}$-huge cardinal is smaller than the first $C^{(n+1)}$-almost-huge cardinal, provided both exist. 
 
\begin{definition}
We say that a cardinal $\kappa$ is \emph{$C^{(n)}$-superhuge } if and only if for every $\alpha$ there is an elementary embedding $j:V\to M$, with $M$ transitive, such that $crit(j)=\kappa$, $\alpha <j(\kappa)$, $M$ is closed under $j(\kappa)$-sequences, and $j(\kappa)\in C^{(n)}$.
\end{definition}

Clearly, $\kappa$ is superhuge (see \cite{K}) if and only if it is $C^{(1)}$-superhuge. 

\begin{proposition}
If $\kappa$ is $C^{(n)}$-superhuge, then $\kappa \in C^{(n+2)}$.
\end{proposition}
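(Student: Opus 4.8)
The plan is to prove $V_\kappa\preceq_{n+2}V$, i.e.\ $\kappa\in C^{(n+2)}$, by verifying agreement on $\Sigma_{n+2}$ sentences with parameters in $V_\kappa$ in both directions, bootstrapping up two correctness levels from the fact that $\kappa\in C^{(n)}$ already. The latter is immediate: fixing any $j:V\to M$ witnessing $C^{(n)}$-superhugeness, $M$ is closed under $j(\kappa)$-sequences, so $V_{j(\kappa)}\subseteq M$ and $j\restriction V_\kappa$ is an elementary map $V_\kappa\to V_{j(\kappa)}$; hence $V_\kappa\prec V_{j(\kappa)}\preceq_n V$ (the last step because $j(\kappa)\in C^{(n)}$), giving $\kappa\in C^{(n)}$, exactly as for $C^{(n)}$-superstrong cardinals.

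The heart of the argument, and the step I expect to be the main obstacle, is a single downward-reflection lemma: for every $\Pi_{n+1}$ formula $\varphi(x,\bar p)$ with $\bar p\in V_\kappa$, if $V\models\exists x\,\varphi(x,\bar p)$ then $V_\kappa\models\exists x\,\varphi(x,\bar p)$. To prove it, take a witness $a$ and apply $C^{(n)}$-superhugeness at $\alpha:=\rank(a)$ to get $j:V\to M$ with $crit(j)=\kappa$, $j(\kappa)>\rank(a)$, $j(\kappa)\in C^{(n)}$, and $V_{j(\kappa)}\subseteq M$. Then $a\in V_{j(\kappa)}$, and since $j(\kappa)\in C^{(n)}$ and $\varphi$ is $\Pi_{n+1}$ with parameters $a,\bar p\in V_{j(\kappa)}$, the basic correctness facts yield $V_{j(\kappa)}\models\varphi(a,\bar p)$, so $V_{j(\kappa)}\models\exists x\,\varphi(x,\bar p)$. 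Letting $\Theta(\eta,\bar q)$ abbreviate the first-order formula ``$V_\eta\models\exists x\,\varphi(x,\bar q)$'' and using $(V_{j(\kappa)})^M=V_{j(\kappa)}$, we obtain $M\models\Theta(j(\kappa),j(\bar p))$, whence by the elementarity of $j$, $V\models\Theta(\kappa,\bar p)$, i.e.\ $V_\kappa\models\exists x\,\varphi(x,\bar p)$. Two delicate points make this pullback legitimate: that $j(\kappa)$ can be pushed above $\rank(a)$ for an \emph{arbitrary} witness $a$ (this is precisely where superhugeness, rather than a single superstrong embedding, is needed, since $a$ may have arbitrarily large rank), and that $M$ computes $V_{j(\kappa)}$ correctly, so that its internal satisfaction predicate $\Theta$ agrees with the real one. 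The same computation applies verbatim to $\Pi_n$ formulas.

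Finally I would assemble the conclusion. Upward preservation of $\Sigma_{n+1}$ sentences holds because $\kappa\in C^{(n)}$; combined with the downward lemma applied to $\Pi_n$ formulas, this gives $V_\kappa\preceq_{n+1}V$, i.e.\ $\kappa\in C^{(n+1)}$. In turn $\kappa\in C^{(n+1)}$ gives upward preservation of $\Sigma_{n+2}$ sentences, and the downward lemma applied to $\Pi_{n+1}$ formulas supplies the reverse direction; together these yield $V_\kappa\preceq_{n+2}V$, that is $\kappa\in C^{(n+2)}$, as desired.
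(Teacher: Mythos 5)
Your proof is correct, and it fills in precisely the details the paper leaves implicit: the paper's entire proof of this proposition is ``similarly as in'' its earlier result that $C^{(n)}$-extendible cardinals lie in $C^{(n+2)}$, whose argument goes by induction on $n$ --- the base case cites the external fact that extendible cardinals are in $C^{(3)}$, the upward direction for $\Sigma_{n+2}$ sentences comes from the induction hypothesis $\kappa\in C^{(n+1)}$, and the downward direction is exactly the witness-pullback you give, performed there with an extendibility embedding $j:V_\lambda\to V_\mu$ rather than a class embedding. You take a genuinely different route on the non-core part: instead of induction on $n$, you bootstrap internally, first getting $\kappa\in C^{(n)}$ from the superstrong-type consequence of a single superhuge embedding ($V_{j(\kappa)}\subseteq M$ and $V_\kappa\prec V_{j(\kappa)}\preceq_n V$), then climbing to $C^{(n+1)}$ and to $C^{(n+2)}$ by pairing the upward correctness already established with your downward-reflection lemma at levels $\Pi_n$ and $\Pi_{n+1}$ respectively. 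This buys two things: the argument is self-contained (no appeal to the $C^{(3)}$ fact, no induction), and it isolates the two points specific to the superhuge setting that the paper's ``similarly'' glosses over --- that superhugeness, not a single superstrong embedding, is what allows $j(\kappa)\in C^{(n)}$ to be pushed above the rank of an arbitrary witness, and that $(V_{j(\kappa)})^M=V_{j(\kappa)}$ is what makes $M$'s internal satisfaction predicate for $V_{j(\kappa)}$ agree with the true one, so the statement can be pulled back through $j$. The one step you assert without proof, namely that closure of $M$ under $j(\kappa)$-sequences yields $V_{j(\kappa)}\subseteq M$, is a standard fact (it requires first checking that $j(\kappa)$ is inaccessible in $V$, which also follows from the closure), and the paper itself uses it silently, so this is fine.
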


\begin{proof}
Similarly as in Proposition \ref{3.5}.
\end{proof}

Note that $\kappa$ is $C^{(n)}$-superhuge if and only if for every $\alpha$ there is a $\kappa$-complete fine and normal ultrafilter $\mathcal{U}$ over some $\mathcal{P}(\lambda)$, with $\lambda \in C^{(n)}$  greater than $\alpha$ and $\kappa$, so that $\{ x\in \mathcal{P}(\lambda):ot(x)=\kappa \}\in \mathcal{U}$.

Thus, ``$\kappa$ is $C^{(n)}$-superhuge" is $\Pi_{n+2}$ expressible.

Arguing similarly as in the case of $C^{(n)}$-huge cardinals, one can easily see that the first $C^{(n)}$-superhuge cardinal is not $C^{(n+1)}$-superhuge. For suppose $\kappa$ is the least $C^{(n)}$-superhuge cardinal, and suppose, towards a contradiction, that it is $C^{(n+1)}$-superhuge. Let $j:V\to M$ be an elementary embedding with $crit(j)=\kappa$, $V_{j(\kappa)}\subseteq M$, and $j(\kappa)\in C^{(n+1)}$. Then $V_{j(\kappa)}\models ``\kappa$ is $C^{(n)}$-superhuge". Hence, since $(V_{j(\kappa)})^M=V_{j(\kappa)}$, 
$$(V_{j(\kappa)})^M\models ``\exists \delta (\delta \mbox{ is $C^{(n)}$-superhuge})".$$
By elementarity, $$V_\kappa \models ``\exists \delta (\delta \mbox{ is $C^{(n)}$-superhuge})".$$ 
And since $\kappa \in C^{(n+2)}$, it is true in $V$ that there exists a $C^{(n)}$-superhuge cardinal $\delta <\kappa$, contradicting the minimality of $\kappa$.

\medskip

Clearly, every $C^{(n)}$-superhuge cardinal is $C^{(n)}$-supercompact. The following Proposition is the $C^{(n)}$-cardinal version of similar results for $m$-huge and superhuge cardinals due to Barbanel-Di Prisco-Tan \cite{BDPT} (see also \cite{K}, 24.13).

\begin{proposition}
\label{BDPT}
$ $
\begin{enumerate}
\item If $\kappa$ is $C^{(n)}$-superhuge, then it is $C^{(n)}$-extendible. Moreover,  there is a $\kappa$-complete normal ultrafilter $\mathcal{U}$ over $\kappa$ such that $\{ \alpha <\kappa :\alpha $ is $C^{(n)}$-extendible$\}\in \mathcal{U}$.
\item If $\kappa$ is $C^{(n)}$-$2$-huge, then there is a $\kappa$-complete normal ultrafilter $\mathcal{U}$ over $\kappa$ such that $\{ \alpha <\kappa : V_\kappa \models ``\alpha $ is $C^{(n)}$-superhuge$"\}\in \mathcal{U}$.
\end{enumerate}
\end{proposition}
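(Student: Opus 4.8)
The plan is to handle the two parts separately, in each case extracting a normal ultrafilter from the witnessing embedding and reflecting a complexity‑calibrated statement through it. Throughout, the engine is the observation used repeatedly above: if $j:V\to M$ has critical point $\kappa$ and $V\models\kappa\in C^{(m)}$, then since ``$x\in C^{(m)}$'' is $\Pi_m$ definable, elementarity gives $M\models j(\kappa)\in C^{(m)}$.

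For (1), fix $\lambda>\kappa$ and, using $C^{(n)}$‑superhugeness, choose $j:V\to M$ with $crit(j)=\kappa$, $M$ closed under $j(\kappa)$‑sequences, $j(\kappa)\in C^{(n)}$ and $j(\kappa)>\lambda$. As $j(\kappa)$ is inaccessible we have $|V_\lambda|<j(\kappa)$, so $e:=j\restriction V_\lambda$ is a $(<\!j(\kappa))$‑sequence of elements of $M$, whence $e\in M$; it is an elementary embedding $V_\lambda\to (V_{j(\lambda)})^M$ with critical point $\kappa$ and $e(\kappa)=j(\kappa)$. Because every $C^{(n)}$‑superhuge cardinal lies in $C^{(n+2)}$ (the Proposition preceding this one), the $\Pi_{n+2}$ assertion $\kappa\in C^{(n+2)}$ transfers under $j$ to $M\models j(\kappa)\in C^{(n+2)}$; in particular $M\models j(\kappa)\in C^{(n)}$, so $e$ witnesses $M\models$``$\kappa$ is $\lambda$‑$C^{(n)}$‑extendible''. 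Since this statement is $\Sigma_{n+1}$ and $M\models j(\kappa)\in C^{(n+1)}$, it reflects inside $M$ to $(V_{j(\kappa)})^{M}\models$``$\kappa$ is $\lambda$‑$C^{(n)}$‑extendible'', and $(V_{j(\kappa)})^{M}=V_{j(\kappa)}$ by the closure of $M$. Finally, since $j(\kappa)\in C^{(n)}$ in $V$ and the statement is $\Sigma_{n+1}$ with parameters in $V_{j(\kappa)}$, it goes up to $V$; hence $\kappa$ is $\lambda$‑$C^{(n)}$‑extendible, and as $\lambda$ was arbitrary, $\kappa$ is $C^{(n)}$‑extendible. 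For the normal ultrafilter, fix one such $j$ and set $\mathcal U=\{X\subseteq\kappa:\kappa\in j(X)\}$. Running the previous reflection for \emph{every} $\lambda<j(\kappa)$ gives $(V_{j(\kappa)})^{M}\models$``$\kappa$ is $C^{(n)}$‑extendible''; since this property is $\Pi_{n+2}$ and $M\models j(\kappa)\in C^{(n+2)}$, it transfers up to $M\models$``$\kappa$ is $C^{(n)}$‑extendible''. As being $C^{(n)}$‑extendible is a first‑order property of the ordinal, this says precisely that $\kappa\in j(\{\alpha<\kappa:\alpha\text{ is }C^{(n)}\text{-extendible}\})$, i.e. $\{\alpha<\kappa:\alpha\text{ is }C^{(n)}\text{-extendible}\}\in\mathcal U$.

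For (2), let $j:V\to M$ witness $C^{(n)}$‑$2$‑hugeness: $crit(j)=\kappa$, $M$ closed under $j^2(\kappa)$‑sequences, $j(\kappa)\in C^{(n)}$; write $\lambda_1=j(\kappa)$, $\lambda_2=j^2(\kappa)$, and set $\mathcal U=\{X\subseteq\kappa:\kappa\in j(X)\}$. Since $V_{\lambda_1}=(V_{\lambda_1})^{M}$ and satisfaction in a set model is absolute, one computes $j(\{\alpha<\kappa:V_\kappa\models$``$\alpha$ is $C^{(n)}$‑superhuge''$\})=\{\alpha<\lambda_1:V_{\lambda_1}\models$``$\alpha$ is $C^{(n)}$‑superhuge''$\}$, so the desired conclusion is equivalent to $V_{\lambda_1}\models$``$\kappa$ is $C^{(n)}$‑superhuge''. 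To verify this I would use the restriction $j\restriction V_{\lambda_1}\colon V_{\lambda_1}\to V_{\lambda_2}$, which lies in $M$ and sends $\kappa$ to $\lambda_1$, together with the hugeness of $\lambda_1$ in $M$ (the $j$‑image of the hugeness of $\kappa$) to manufacture, inside $V_{\lambda_1}$, $\kappa$‑complete fine normal ultrafilters concentrating on $\{x:ot(x)=\kappa\}$ over $\mathcal P(\lambda)$ for cofinally many $\lambda<\lambda_1$ with $\lambda\in C^{(n)}$. By the ultrafilter characterization of $C^{(n)}$‑superhugeness recorded above, together with $\lambda_1\in C^{(n)}$, this is exactly $V_{\lambda_1}\models$``$\kappa$ is $C^{(n)}$‑superhuge''. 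This is the $C^{(n)}$‑version of the Barbanel–Di Prisco–Tan argument (see \cite{BDPT}, and \cite{K}, 24.13), and the $C^{(n)}$ clause is carried by the single fact $\lambda_1=j(\kappa)\in C^{(n)}$.

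The delicate point in (1) is the calibration of correctness levels: the statement being reflected is $\Sigma_{n+1}$, so reflecting it \emph{down} inside $M$ requires $M\models j(\kappa)\in C^{(n+1)}$ (supplied by $\kappa\in C^{(n+2)}$ via elementarity), whereas pushing it back \emph{up} to $V$ needs only $j(\kappa)\in C^{(n)}$, the defining hypothesis; the two requirements dovetail exactly, and the analogous $\Pi_{n+2}$‑versus‑$C^{(n+2)}$ match drives the measure computation. The genuinely hard step is in (2): producing $C^{(n)}$‑huge witnesses for $\kappa$ with targets cofinal below $\lambda_1$ and living inside $V_{\lambda_1}$, since the huge ultrafilter derived from $j$ has target exactly $\lambda_1$ and does not itself belong to $V_{\lambda_1}$. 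This is precisely where the second level of hugeness (the closure of $M$ under $\lambda_2$‑sequences and the hugeness of $\lambda_1$ in $M$) must be exploited, as in \cite{BDPT}.
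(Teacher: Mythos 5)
Your part (1) is correct and is essentially the paper's own argument: restrict $j$ to $V_\lambda$ to get an extendibility witness inside $M$, reflect the $\Sigma_{n+1}$ statement into $(V_{j(\kappa)})^M=V_{j(\kappa)}$ using the correctness of $j(\kappa)$ inside $M$ (you use $M\models j(\kappa)\in C^{(n+1)}$ where the paper uses $C^{(n+2)}$; both follow from $\kappa\in C^{(n+2)}$ via elementarity and both suffice), push the result up to $V$ via $j(\kappa)\in C^{(n)}$, and then run the normality computation; your variant of transferring ``$\kappa$ is $C^{(n)}$-extendible'' up from $(V_{j(\kappa)})^M$ to $M$ is equivalent bookkeeping to the paper's use of $\kappa\in C^{(n+2)}$ to remove the relativization to $V_\kappa$.

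Part (2), however, has a genuine gap. Your reduction of the conclusion to the single statement $V_{\lambda_1}\models$``$\kappa$ is $C^{(n)}$-superhuge'' (via normality and absoluteness of satisfaction) is valid, but that statement is the entire content of (2), and you do not prove it: you only say you ``would manufacture'' the required ultrafilters from $j\restriction V_{\lambda_1}$ together with ``the hugeness of $\lambda_1$ in $M$'', deferring to \cite{BDPT}. Neither ingredient works as described. The $j$-image of the hugeness of $\kappa$ says that $\lambda_1$ is huge \emph{in $M$}, i.e., it produces ultrafilters for $\lambda_1$ with targets near $\lambda_2$; this gives no huge ultrafilters for $\kappa$ with targets below $\lambda_1$. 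And the natural direct route --- reflecting the true-in-$M$ $\Sigma_{n+1}$ statement ``there is $\lambda\in C^{(n)}$ above $\gamma$ such that $\kappa$ is huge with target $\lambda$'' (witnessed by $\lambda_1$ itself) down into $(V_{\lambda_1})^M$ --- is blocked, because it would require $M\models\lambda_1\in C^{(n+1)}$, while elementarity only yields $M\models\lambda_1\in C^{(n)}$ (from $\kappa\in C^{(n)}$); unlike the superhuge case, no analogue of $\kappa\in C^{(n+2)}$ is available for $C^{(n)}$-$2$-huge cardinals, so the ``calibration'' that saved part (1) is missing exactly here.

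The paper's proof circumvents this by a double application of normality carried out \emph{below} $\kappa$, exploiting the closure of $M$ under $\lambda_2$-sequences twice. First, the huge ultrafilter over $\mathcal{P}(\lambda_1)$ derived from $j$ is an element of $M$, so $M\models$``$\kappa$ is huge, witnessed by an embedding $k$ with $k(\kappa)=j(\kappa)$'', and normality gives $A:=\{\alpha<\kappa:\alpha$ is huge, witnessed by an embedding $k$ with $k(\alpha)=\kappa\}\in\mathcal{U}$. Second, for each $\alpha\in A$ the witnessing ultrafilter lives over $\mathcal{P}(\kappa)$ and hence is again in $M$, so normality applies once more and yields $\{\beta<\kappa:\alpha$ is huge with target $\beta\}\in\mathcal{U}$. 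Intersecting with $C^{(n)}\cap\kappa\in\mathcal{U}$ (which holds because $\kappa,j(\kappa)\in C^{(n)}$ and $(V_{j(\kappa)})^M=V_{j(\kappa)}$) produces, for every $\alpha\in A$, targets in $C^{(n)}$ cofinal in $\kappa$, whence $V_\kappa\models$``$\alpha$ is $C^{(n)}$-superhuge'' and $A$ is contained in the desired set. This two-step reflection below $\kappa$ --- rather than a direct construction of ultrafilters with targets cofinal below $\lambda_1$ --- is the argument your proposal is missing.
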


\begin{proof}
(1): Fix $\lambda >\kappa$. Let $j:V\to M$ be a witness to the $C^{(n)}$-hugeness of $\kappa$ with $j(\kappa)>\lambda$. Then $M\models ``\kappa$ is $\lambda$-$C^{(n)}$-extendible". Since $M\models ``j(\kappa)\in C^{(n+2)}"$, we have  $(V_{j(\kappa)})^M\models ``\kappa$ is $\lambda$-$C^{(n)}$-extendible". Hence,  since $(V_{j(\kappa)})^M=V_{j(\kappa)}$,
$V_{j(\kappa)}\models ``\kappa$ is $\lambda$-$C^{(n)}$-extendible", and therefore  $\kappa$ is $\lambda$-$C^{(n)}$-extendible.

Notice that the argument above actually shows that  $(V_{j(\kappa)})^M\models ``\kappa$ is $C^{(n)}$-extendible". Thus, if $\mathcal{U}$ is the standard $\kappa$-complete normal ultrafilter over $\kappa$ derived from $j$, we have $\{ \alpha <\kappa :V_\kappa \models ``\alpha $ is $C^{(n)}$-extendible$"\}\in \mathcal{U}$. Hence, since $\kappa\in C^{(n+2)}$, $\{ \alpha <\kappa :\alpha $ is $C^{(n)}$-extendible$\}\in \mathcal{U}$.

\smallskip

(2): Let $j:V\to M$ witness that $\kappa$ is $C^{(n)}$-$2$-huge. Since $M$ is closed under $j^2(\kappa)$-sequences, the $\kappa$-complete fine and normal ultrafilter over $\mathcal{P}(j(\kappa))$ derived from $j$ that witnesses the hugeness of $\kappa$ belongs to $M$. Hence, 
$$M\models ``\kappa\mbox{ is huge, witnessed by some embedding }k\mbox{ with }k(\kappa)=j(\kappa)".$$ 
Thus, if $\mathcal{U}$ is the standard $\kappa$-complete normal ultrafilter over $\kappa$ derived from $j$, we have $$A:= \{ \alpha <\kappa : \alpha\mbox{ is huge, witnessed by an embedding $k$ with }k(\alpha)=\kappa \} \in \mathcal{U}.$$ Since $M$ contains all the  ultrafilters over $\mathcal{P}(\kappa)$, it follows that for each $\alpha \in A$, $$M\models ``\alpha\mbox{ is huge, witnessed by an embedding }k\mbox{ with }k(\alpha)=\kappa ".$$ Hence, $\{ \beta <\kappa: \alpha$ is huge, witnessed by an embedding $k$ with $k(\alpha)=\beta \}\in \mathcal{U}$. 

Notice that since $\kappa , j(\kappa)\in C^{(n)}$, $V_{j(\kappa)}\models ``\kappa \in C^{(n)}"$. And since $(V_{j(\kappa)})^M=V_{j(\kappa)}$, the set $C^{(n)}\cap \kappa$ is in $\mathcal{U}$. So,
$\{ \beta <\kappa: \alpha$ is $C^{(n)}$-huge, witnessed by an embedding $k$ with $k(\alpha)=\beta \}\in \mathcal{U}$.

Thus, for each $\alpha \in A$, $$V_\kappa \models ``\alpha\mbox{ is }C^{(n)}\mbox{-superhuge}".$$ It follows that $\{ \alpha <\kappa:V_\kappa \models ``\alpha$ is $C^{(n)}$-superhuge$"\} \in \mathcal{U}$.
\end{proof}

\section{On elementary embeddings of a rank into itself}
\label{lastsection}

Finally, we will consider  $C^{(n)}$-cardinal forms of the very strong large cardinal principles known as $E_i$, for $0\leq i \leq \omega$ (see \cite{L}).  The principle $E_0$ (also known in the literature as I3 (see \cite{K}, 24)) asserts the existence of a non-trivial elementary embedding $j:V_{\delta}\to V_{\delta}$, with $\delta$ a limit ordinal. Let us call the critical point of such an embedding an \emph{$E_0$ cardinal}.

If $j:V_{\delta}\to V_{\delta}$ witnesses that $\kappa$ is $E_0$, then Kunen's Theorem implies that  $\delta=sup\{ j^m (\kappa):m\in \omega \}$, where $j^m$ is the m-th iterate of $j$. It follows that $\delta\in C^{(1)}$, because all the $j^m(\kappa)$ are inaccessible cardinals (in fact, measurable cardinals) and therefore they all belong to $C^{(1)}$. Moreover, $V_{\kappa}$ and $V_{j^m(\kappa)}$, all $m \geq 1$, are elementary substructures of $V_{\delta}$. Therefore, $V_{\delta}\models \mbox{ZFC}$. 

\begin{theorem}
\label{lasttheorem}
If $\kappa$ is $E_0$, witnessed by $j:V_{\delta}\to V_{\delta}$, then in $V_\delta$,  $\kappa$ (and also all the cardinals $j^m(\kappa)$, $m\geq 1$), are  $C^{(n)}$-superstrong,   $C^{(n)}$-extendible, $C^{(n)}$-supercompact, $C^{(n)}$-$k$-huge, and $C^{(n)}$-superhuge, for all $n,k\geq 1$. 
\end{theorem}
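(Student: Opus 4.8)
The argument takes place entirely inside $V_\delta$, which satisfies ZFC, and exploits the external embedding $j$ together with its iterates $j^m:V_\delta\to V_\delta$. The plan rests on three preliminary observations. First, since $V_\kappa$ and each $V_{j^m(\kappa)}$ ($m\geq 1$) are elementary substructures of $V_\delta$, the ordinals $\kappa$ and $j^m(\kappa)$ all belong to $(C^{(n)})^{V_\delta}$ for every $n$; moreover $(C^{(n)})^{V_\delta}$ is club in $\delta$ and, since $\delta=\sup_m j^m(\kappa)$, for every $\lambda<\delta$ there is an $m$ with $j^m(\kappa)>\lambda$. Second, for each $\alpha<\delta$ and each $m$ the restriction $j^m\restriction V_\alpha:V_\alpha\to V_{j^m(\alpha)}$ is an elementary embedding lying in $V_\delta$ (as $j^m(\alpha)<\delta$ and $\delta$ is a limit), with critical point $\kappa$ and $j^m(\kappa)$ as the image of $\kappa$; and $V_\delta$ recognizes it as elementary, since satisfaction in set-sized structures is absolute. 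Third, it suffices to establish each of the five properties for $\kappa$ itself: applying the elementarity of $j:V_\delta\to V_\delta$ repeatedly then transfers each property (a first-order formula with $C^{(n)}$ interpreted in $V_\delta$) to every $j^m(\kappa)$.

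For $C^{(n)}$-extendibility, fix $\lambda<\delta$ and choose $m$ with $j^m(\kappa)>\lambda$; then $j^m\restriction V_\lambda$ witnesses in $V_\delta$ that $\kappa$ is $\lambda$-$C^{(n)}$-extendible, as its critical point is $\kappa$, it sends $\kappa$ above $\lambda$, and $j^m(\kappa)\in(C^{(n)})^{V_\delta}$. Since $\lambda$ was arbitrary, $\kappa$ is $C^{(n)}$-extendible in $V_\delta$; in particular it is $\kappa+1$-$C^{(n)}$-extendible there, so by Proposition \ref{extendibleimpliessuperstrong} (a theorem of ZFC, hence valid in $V_\delta$) $\kappa$ is $C^{(n)}$-superstrong in $V_\delta$.

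For the huge-type notions I would use the normal-ultrafilter characterizations of Section \ref{sectionhuge}, whose advantage here is that the witnessing ultrafilters live on some $\mathcal{P}(\lambda)$ with $\lambda<\delta$ and are therefore bounded subsets of $V_\delta$. Concretely, $j^m:V_\delta\to V_\delta$ is a $k$-huge embedding of $V_\delta$ with critical sequence $\kappa,j^m(\kappa),\dots,j^{mk}(\kappa)$, so, writing $\lambda=j^{mk}(\kappa)$, the ultrafilter $\mathcal{U}=\{X\subseteq\mathcal{P}(\lambda):(j^m)''\lambda\in j^m(X)\}$ is an element of $V_\delta$, and one checks that $V_\delta$ satisfies that $\mathcal{U}$ is a $\kappa$-complete fine normal ultrafilter with the required order-type conditions and with $\lambda_1=j^m(\kappa)\in C^{(n)}$; hence $\kappa$ is $C^{(n)}$-$k$-huge in $V_\delta$. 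Taking $k=1$ and, for each prescribed $\alpha<\delta$, an $m$ with $j^m(\kappa)>\alpha$, the same ultrafilters witness that $\kappa$ is $C^{(n)}$-superhuge in $V_\delta$. Each verification reduces to the absoluteness between $V_\delta$ and $V$ of the (bounded) properties defining these ultrafilters.

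Finally, $C^{(n)}$-supercompactness follows from $C^{(n)}$-superhugeness by the implication ``every $C^{(n)}$-superhuge cardinal is $C^{(n)}$-supercompact'', a theorem of ZFC that therefore holds in $V_\delta$. I expect this last point to be where the real care is needed, and it is precisely the reason for routing supercompactness through superhugeness rather than treating it directly. The faithful, first-order (long-extender) rendering of $\lambda$-$C^{(n)}$-supercompactness following Definition \ref{definitionCnsupercompact} requires an extender whose support is closed under the witnessing embedding; were one to use the external $j^m$ directly, this support would have to contain the cofinal sequence $\langle j^{mk}(\kappa):k<\omega\rangle$ and hence be unbounded in $\delta$, so no such extender lies in $V_\delta$. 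Passing through superhugeness avoids this obstacle: the embedding extracted from the huge ultrafilter $\mathcal{U}\in V_\delta$ is an \emph{internal} set-ultrapower $j_{\mathcal{U}}:V_\delta\to M$, for which $\langle j_{\mathcal{U}}^k(\kappa):k<\omega\rangle$ is a set of $V_\delta$ and hence bounded below $\delta$ by Replacement; the long-extender construction then runs inside $V_\delta$ with bounded support, producing the required witness with $j_{\mathcal{U}}(\kappa)\in(C^{(n)})^{V_\delta}$. Thus the main difficulty is the interplay between the external embedding and the demand for bounded, first-order witnesses inside $V_\delta$; once it is resolved for supercompactness by the superhuge detour, and once the absoluteness of the ultrafilter characterizations is checked, the remaining verifications are routine.
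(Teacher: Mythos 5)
Your proposal is correct and follows essentially the same route as the paper: both establish $C^{(n)}$-superhugeness and $C^{(n)}$-$k$-hugeness inside $V_\delta$ by deriving normal fine ultrafilters from the iterates $j^m$ (these being sets of $V_\delta$, so that the witnessing embeddings become internal ultrapowers), and then obtain the remaining properties from ZFC-provable implications, which hold in $V_\delta$ since $V_\delta \models \mathrm{ZFC}$. The only differences are minor: you prove $C^{(n)}$-extendibility directly from the restrictions $j^m \restriction V_\lambda$ rather than deriving it from superhugeness, and you make explicit both the elementarity transfer to the cardinals $j^m(\kappa)$ and the reason the external embedding cannot directly witness supercompactness (a long-extender support closed under $j^m$ would be unbounded in $\delta$), points the paper leaves implicit.
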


\begin{proof}
First notice that in $V_\delta$, $\kappa$ and all iterates $j^m(\kappa)$, $m\geq 1$, belong to $C^{(n)}$, for all $n$.  

To see that $\kappa$ is $C^{(n)}$-superhuge in $V_\delta$, pick any $\alpha <\delta$. Then we can find $m$ such that $j^m(\kappa)>\alpha$.  Thus, $j^m:V_\delta \to V_\delta$, $crit(j^m)=\kappa$, $\alpha <j^m(\kappa)$, $V_\delta$ is closed under $j^m(\kappa)$-sequences, and $j^m(\kappa)\in C^{(n)}$.
Define $\mathcal{U}$ by:
$$X\in \mathcal{U}\mbox{ if and only if } X\subseteq \mathcal{P}(j^m(\kappa)) \wedge j^m  ``j^m(\kappa)\in j^m(X).$$
One can easily check that  $\mathcal{U}$ is  a $\kappa$-complete fine and normal ultrafilter $\mathcal{U}$ over  $\mathcal{P}(j^m(\kappa))$ with $\{ x\in \mathcal{P}(j^m(\kappa)):ot(x)=\kappa \}\in \mathcal{U}$. Since $\mathcal{U}\in V_\delta$, we can now, in $V_\delta$, define the ultrapower embedding $k:V_\delta \to M\cong Ult(V_\delta ,\mathcal{U})$. Then $crit(k)=\kappa$, $\alpha <k(\kappa)$, $M$ is closed under $k(\kappa)$-sequences, and $k(\kappa)\in C^{(n)}$ (see \cite{K}, 24.8 for details). Since the same can be done for each $\alpha <\delta$, this shows that in $V_\delta$ $\kappa$ is $C^{(n)}$-superhuge. Hence $\kappa$ is also $C^{(n)}$-supercompact, $C^{(n)}$-extendible, and $C^{(n)}$-superstrong. The argument for showing that $\kappa$ is $C^{(n)}$-$k$-huge is similar to the one for $C^{(n)}$-superhugeness, using the characterization of $C^{(n)}$-$k$-hugeness in terms of ultrafilters (see Section \ref{sectionhuge}).
\end{proof}

Thus, modulo ZFC,  the consistency of  the existence of an $E_0$  cardinal implies the consistency with ZFC of the existence of all $C^{(n)}$-cardinals considered in previous sections. Notice that a consequence of the Theorem above (and Corollary \ref{coro3.15})  is that $V_{\delta}$ satisfies VP.

\medskip

Let us now say that $\kappa$ is a \emph{$C^{(n)}$-$E_0$ cardinal} if it is $E_0$, witnessed by some embedding $j:V_{\delta}\to V_{\delta}$,  with $j(\kappa)\in C^{(n)}$.

Clearly, if $\kappa$ is $C^{(n)}$-$E_0$, then $\kappa\in C^{(n)}$.   In fact,  $\kappa$ is a limit point of $C^{(n)}$. For suppose  $\alpha$ is smaller than $\kappa$. Then $V_{j(\kappa)}$ satisfies that there exists some $\beta \in C^{(n)}$ greater than $\alpha$, since $\kappa$ is such a $\beta$. Hence, by the elementarity of $j$,  $V_\kappa$ satisfies that  some $\beta$ greater than $\alpha$  belongs to $C^{(n)}$. But since $\kappa\in C^{(n)}$, $\beta$ does indeed belong to $C^{(n)}$. 

Every $E_0$ cardinal is, evidently,  $C^{(1)}$-$E_0$. However, a simple reflection argument shows that the least $C^{(n)}$-$E_0$ cardinal, for $n\geq 1$, is smaller than the first cardinal in $C^{(n+1)}$, and therefore it is not  $C^{(n+1)}$-$E_0$. For suppose  $\alpha \in C^{(n+1)}$ is less than or equal to the first $C^{(n)}$-$E_0$ cardinal $\kappa$. Then  $V_{\alpha}$ satisfies the  $\Sigma_{n+1}$ statement asserting the existence of a $C^{(n)}$-$E_0$ cardinal, because $\kappa$ witnesses it in $V$.
But if $V_\alpha$ thinks some $\lambda$ is a $C^{(n)}$-$E_0$ cardinal, then so does  $V$, contradicting the minimality of  $\kappa$.

\begin{proposition}
\label{prop7.1}
If $\kappa$ is $C^{(n)}$-$E_0$, then it  is $C^{(n)}$-$m$-huge, for all $m$, and there is a $\kappa$-complete normal ultrafilter $\mathcal{U}$ over $\kappa$ such that 
$$\{\alpha <\kappa:\alpha\mbox{ is $C^{(n)}$-$m$-huge for every }m\}\in \mathcal{U}.$$
\end{proposition}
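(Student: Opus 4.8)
The plan is to produce, for each $m$, a genuine $m$-huge tower measure on $V$ derived from the iterates of $j$, and then to obtain the normal measure on $\kappa$ by reflecting through $j$ the fact, supplied by Theorem \ref{lasttheorem}, that $V_\delta$ already believes $\kappa$ is $C^{(n)}$-$m$-huge. Throughout, let $j:V_\delta\to V_\delta$ witness $C^{(n)}$-$E_0$-ness, write $\kappa=crit(j)$ and $\lambda_i=j^i(\kappa)$, so that $\delta=\sup_i\lambda_i$ and $\lambda_1=j(\kappa)\in C^{(n)}$.

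For the first assertion, fix $m\geq 1$ and define $\mathcal{U}$ on $\mathcal{P}(\lambda_m)$ by $X\in\mathcal{U}$ iff $j''\lambda_m\in j(X)$. Since $\lambda_m<\delta$, every $X\subseteq\mathcal{P}(\lambda_m)$ and its image $j(X)$ lie in $V_\delta$, and $j''\lambda_m\subseteq\lambda_{m+1}$ has order type $\lambda_m$; hence $\mathcal{U}\in V$ is a genuine $\kappa$-complete, fine, normal ultrafilter on $\mathcal{P}(\lambda_m)$, by the standard derivation of a huge tower measure (as in Theorem \ref{lasttheorem} and \cite{K}, 24.8). A direct computation gives $j''\lambda_m\cap\lambda_{i+2}=j''\lambda_{i+1}$, so $\{x:ot(x\cap\lambda_{i+1})=\lambda_i\}\in\mathcal{U}$ for each $i<m$; thus $\mathcal{U}$ realizes the tower $\kappa=\lambda_0<\cdots<\lambda_m$, its ultrapower $Ult(V,\mathcal{U})$ is well-founded by countable completeness, and the induced embedding sends $\kappa$ to $\lambda_1=j(\kappa)$. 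As $\lambda_1\in C^{(n)}$ by hypothesis, the ultrafilter characterization of Section \ref{sectionhuge} shows $\kappa$ is $C^{(n)}$-$m$-huge. Note that only $j(\kappa)\in C^{(n)}$ is used here, not that the higher iterates lie in $C^{(n)}$.

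For the normal measure, set $\mathcal{W}=\{X\subseteq\kappa:\kappa\in j(X)\}$, a genuine $\kappa$-complete normal ultrafilter on $\kappa$. Fixing $m$, let $\chi_m$ be the $\Sigma_{n+1}$ formula expressing $C^{(n)}$-$m$-hugeness and put $Z_m=\{\alpha<\kappa:V_\delta\models\chi_m(\alpha)\}$. Then $Z_m\subseteq\kappa$ is an element of $V_\delta$ defined over $V_\delta$ by the formula $\alpha<\kappa\wedge\chi_m(\alpha)$, so applying the elementary $j$ gives $j(Z_m)=\{\alpha<j(\kappa):V_\delta\models\chi_m(\alpha)\}$, whence $Z_m\in\mathcal{W}$ iff $V_\delta\models\chi_m(\kappa)$. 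The latter is exactly Theorem \ref{lasttheorem}, so $Z_m\in\mathcal{W}$. If $Z_m$ is contained in the true set $\{\alpha<\kappa:\alpha\text{ is }C^{(n)}\text{-}m\text{-huge}\}$, then that set belongs to $\mathcal{W}$, and intersecting over all $m$ (using $\kappa$-completeness) yields $\{\alpha<\kappa:\alpha\text{ is }C^{(n)}\text{-}m\text{-huge for every }m\}\in\mathcal{W}$.

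The delicate step, and the one I expect to be the main obstacle, is precisely this last inclusion: passing from $V_\delta\models\chi_m(\alpha)$ to genuine $C^{(n)}$-$m$-hugeness of $\alpha$ in $V$, where $\chi_m$ is only $\Sigma_{n+1}$ and the $C^{(n)}$ computed inside $V_\delta$ need not coincide with the real $C^{(n)}$. I would route this through $V_\kappa$, using two facts available in the excerpt: $V_\kappa\preceq V_\delta$ (full elementarity, since the $V_{\lambda_i}$ form an elementary chain with union $V_\delta$) and $\kappa\in C^{(n)}$, i.e.\ $V_\kappa\preceq_n V$. For $\alpha<\kappa$, full elementarity gives $V_\delta\models\chi_m(\alpha)$ iff $V_\kappa\models\chi_m(\alpha)$, so a witnessing tower measure may be taken inside $V_\kappa$, hence below $\kappa$; its ultrafilter and order-type conditions are then absolute, its real ultrapower is well-founded, and its ``$\lambda_1^{(\alpha)}\in C^{(n)}$'' clause concerns an ordinal below $\kappa$, so $V_\kappa\preceq_n V$ converts $V_\kappa$-correctness of that $\Pi_n$ clause into genuine membership in $C^{(n)}$. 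Thus $V_\kappa\models\chi_m(\alpha)$ does witness real $C^{(n)}$-$m$-hugeness, giving the required inclusion. The point to get right is exactly this transfer from $V_\kappa$ to $V_\delta$ to $V$ for a $\Sigma_{n+1}$ property, which is why $V_\kappa$—the level simultaneously fully elementary in $V_\delta$ and $\Sigma_n$-correct in $V$—is the correct intermediary.
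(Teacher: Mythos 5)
Your proof is correct and follows essentially the same route as the paper's: derive the tower measure $X\in\mathcal{V}$ iff $j''\lambda\in j(X)$ (with $\lambda=j^m(\kappa)$) to witness $C^{(n)}$-$m$-hugeness via the ultrafilter characterization, then reflect through the normal measure derived from $j$, using the fact that $V_\delta$ itself satisfies that $\kappa$ is $C^{(n)}$-$m$-huge. The only difference is one of detail: your careful transfer of ``$V_\delta\models\alpha$ is $C^{(n)}$-$m$-huge'' into genuine $C^{(n)}$-$m$-hugeness, routed through $V_\kappa\preceq V_\delta$ and $\kappa\in C^{(n)}$, is precisely the step the paper compresses into its final ``and so.''
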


\begin{proof}
Let $j:V_\delta \to V_\delta$ witness that $\kappa$ is $C^{(n)}$-$E_0$, with $\delta$ limit. Then as in \cite{K}, 24.8 one can show that the ultrafilter  $\mathcal{V}$ over $\mathcal{P}(\lambda)$, where $\lambda =j^m(\kappa)$, defined by
$$X\in \mathcal{V}\mbox{ if and only if }j''\lambda \in j(X)$$
witnesses that $\kappa$ is $C^{(n)}$-$m$-huge. Let $\mathcal{U}$ be the usual $\kappa$-complete normal ultrafilter over $\kappa$ obtained from $j$.  Since $\mathcal{V}\in V_\delta$, $V_\delta$ satisfies that $\kappa$ is $C^{(n)}$-$m$-huge, and so $\{\alpha <\kappa:\alpha\mbox{ is $C^{(n)}$-$m$-huge for every }m\}\in \mathcal{U}$.
\end{proof}

\begin{proposition}
\label{propI3}
Suppose $j:V_{\delta}\to V_{\delta}$ witnesses that $\kappa$ is $E_0$, and $\delta$ is a limit ordinal. Then the following are equivalent for every $n\geq 1$,
\begin{enumerate}
%\item $\kappa\in C^{(n)}$
\item $j^m(\kappa)\in C^{(n)}$, all $1\leq m <\omega$.
\item $\delta\in C^{(n)}$.
\end{enumerate}
\end{proposition}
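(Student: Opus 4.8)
The plan is to lean on two structural facts about $E_0$ embeddings that were already recorded just before Theorem~\ref{lasttheorem}: first, that $\delta=\sup\{j^m(\kappa):m\in\omega\}$; and second, that each $V_{j^m(\kappa)}$, for $m\geq 1$, is a \emph{full} elementary substructure of $V_\delta$ (not merely a $\Sigma_n$-elementary one), which I will write $V_{j^m(\kappa)}\prec V_\delta$. Combined with the fact, established in the introduction, that $C^{(n)}$ is a closed class of ordinals, these give both implications almost immediately, and in particular no induction on $n$ is required.

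For (1)$\Rightarrow$(2) I would argue purely by closure. Assuming every $j^m(\kappa)$ lies in $C^{(n)}$, the ordinals $\kappa<j(\kappa)<j^2(\kappa)<\cdots$ form a strictly increasing $\omega$-sequence of members of $C^{(n)}$ whose supremum is $\delta$. Hence $\delta$ is a limit point of $C^{(n)}$, and since $C^{(n)}$ is closed, $\delta\in C^{(n)}$.

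For (2)$\Rightarrow$(1) I would chain two elementarity relations. Fix $m\geq 1$. Reading $\delta\in C^{(n)}$ as $V_\delta\preceq_n V$, and using $V_{j^m(\kappa)}\prec V_\delta$, for any $\Sigma_n$ formula $\varphi$ and any parameter $a\in V_{j^m(\kappa)}$ one has $V_{j^m(\kappa)}\models\varphi(a)$ iff $V_\delta\models\varphi(a)$ (by full elementarity, noting $a\in V_\delta$) iff $V\models\varphi(a)$ (by $\Sigma_n$-correctness of $\delta$ and $a\in V_\delta$). Thus $V_{j^m(\kappa)}\preceq_n V$, i.e.\ $j^m(\kappa)\in C^{(n)}$, for every $m\geq 1$.

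The genuine content lies entirely in the two pre-established facts, so the only real point of care is that the substructure relation $V_{j^m(\kappa)}\prec V_\delta$ is \emph{full} elementarity: this is exactly what lets me transfer an arbitrary $\Sigma_n$ statement from $V_{j^m(\kappa)}$ up to $V_\delta$ before invoking $\delta\in C^{(n)}$, so that the composition $V_{j^m(\kappa)}\preceq_n V_\delta\preceq_n V$ goes through. The temptation to avoid is overcomplicating matters with a level-by-level induction on $n$, which the closure of $C^{(n)}$ and the elementary chain render entirely unnecessary.
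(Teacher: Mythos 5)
Your proof is correct and follows essentially the same route as the paper's: the paper also dispatches (1)$\Rightarrow$(2) by noting $\delta=\sup\{j^m(\kappa):m<\omega\}$ (closure of $C^{(n)}$), and (2)$\Rightarrow$(1) by composing the full elementarity $V_{j^m(\kappa)}\prec V_\delta$ with $V_\delta\preceq_n V$. Your write-up just makes explicit the details the paper leaves as ``immediate'' and ``easily verifiable.''
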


\begin{proof}
%For $n=1$, (1) and (2) are true. 

(1) implies (2) is immediate, since $\delta =sup\{ j^m(\kappa):m<\omega \}$.
And (2) implies (1)  directly follows from the easily verifiable fact that $V_\kappa$ and $V_{j^m(\kappa)}$, all $m\geq 1$, are elementary substructures of $V_\delta$.
%Let's prove (2) implies (1) for n+1, assuming it holds for $n$. Fix $m<\omega$. Let $\exists x\varphi(x)$ be a $\Sigma_{n+1}$ formula whose parameters, if any, are in $V_{j^m(\kappa)}$, and suppose the formula is true in $V$. Then, by (2), it is also true in $V_{\delta}$. Let $\ell\geq m$ be big enough so that $V_{j^{\ell}(\kappa)}$ contains a witness to the formula. Then $V_{\delta}$ satisfies
%$$\exists x\in V_{j^{\ell}(\kappa)}\, \varphi (x).$$
%Hence by elementarity of $j^{\ell -m}$,   $V_{\delta}$ satisfies
%$$\exists x\in V_{j^m(\kappa)} \varphi(x).$$ If $a$ is such an $x$, then since by inductive hypothesis $V_{j^m(\kappa)}\preceq_n V_{\delta}$, we have  that $\varphi (a)$ holds in $V_{j^m(\kappa)}$.
\end{proof}

%Let us observe that if $j:V_\delta \to V_\delta$ and $j':V_{\delta'}\to V_{\delta'}$ both witness that $\kappa$ is I3, then $\delta=\delta'$. For suppose  $\kappa$ is the least I3 cardinal for which there are two different such embeddings, and suppose $\delta <\delta'$. Then

This suggests the following definitions.

\begin{definition}
We say that $\kappa$ is \emph{$m$-$C^{(n)}$-$E_0$}, where $m\geq 1$,  if it is $C^{(n)}$-$E_0$, witnessed by some $j:V_\delta \to V_\delta$ with $j^{m'}(\kappa)\in C^{(n)}$ for all $1\leq m'\leq m$. And we say that $\kappa$ is \emph{$\omega$-$C^{(n)}$-$E_0$} if it is $C^{(n)}$-$E_0$, witnessed by $j:V_{\delta}\to V_{\delta}$ with $\delta \in C^{(n)}$. 
\end{definition}

Clearly, $\kappa$ is $E_0$ if and only if it is $C^{(1)}$-$E_0$ if and only if it is $\omega$-$C^{(1)}$-$E_0$.

Observe that if $\kappa$ is $m$-$C^{(n)}$-$E_0$,  where $1\leq m <\omega$, witnessed by $j:V_\delta \to V_\delta$ with $\delta$ the minimal such, then $\delta \not \in C^{(2)}$. Otherwise, $V_\delta$ would reflect the $\Sigma_2$ statement:
$$\exists \eta \exists k(k:V_\eta \to V_\eta \wedge crit(k)=\kappa \wedge \bigwedge_{1\leq  m'\leq m} (k^{m'}(\kappa)=j^{m'}(\kappa)))$$
where $\kappa$ and $j^{m'}(\kappa)$, all $1\leq m' \leq m$, are parameters, and so a witness to the statement would yield a counterexample to the minimality of $\delta$. It follows that $j$ cannot witness that $\kappa$ is $\omega$-$C^{(2)}$-$E_0$.
%Thus, for $n\geq 2$,  if ZFC plus the existence of a $C^{(n)}$-I3 cardinal is consistent, then so is ZFC plus the existence of  a $C^{(n)}$-I3 cardinal and the non-existence of a $C^{(n)+}$-I3 cardinal.

Note also, by the reflection argument given prior to Proposition \ref{prop7.1},  that the least $\omega$-$C^{(n)}$-$E_0$ cardinal $\kappa$ is  smaller than the first cardinal in   $C^{(n+1)}$. Hence, no cardinal less than or equal to $\kappa$ is $C^{(n+1)}$-$E_0$, for $n\geq 1$.

\begin{proposition}
\label{mlevelI3}
The least $m$-$C^{(n)}$-$E_0$ cardinal is not $(m+1)$-$C^{(n)}$-$E_0$, for all $m\geq 1$ and $n\geq 2$.
\end{proposition}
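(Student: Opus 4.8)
The plan is to argue by contradiction. Suppose $\kappa$ is the least $m$-$C^{(n)}$-$E_0$ cardinal and that, nonetheless, $\kappa$ is $(m+1)$-$C^{(n)}$-$E_0$, witnessed by some $j:V_\delta\to V_\delta$ with $\delta$ a limit ordinal. Write $\kappa_i:=j^i(\kappa)$, so that $\kappa_0=\kappa$, $\delta=\sup_i\kappa_i$, and by hypothesis $\kappa_i\in C^{(n)}$ for all $1\le i\le m+1$; recall also $\kappa=\kappa_0\in C^{(n)}$. My goal is to produce a cardinal $\gamma<\kappa$ that is genuinely $m$-$C^{(n)}$-$E_0$, contradicting the minimality of $\kappa$. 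The difficulty is that "$\gamma$ is $m$-$C^{(n)}$-$E_0$" is a $\Sigma_{n+1}$ property whose witnessing $E_0$ embeddings have rank above $\kappa_m$, so it cannot be reflected below $\kappa_1$ from the single fact $\kappa_1\in C^{(n)}$. The extra iterate $\kappa_{m+1}\in C^{(n)}$ is precisely what lets the argument go through, used through a $\Sigma_2$ reflection together with $C^{(n)}\subseteq C^{(2)}$ (valid since $n\ge2$).

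First I would show $V_{\kappa_{m+1}}\models$ "$\kappa$ is $m$-$C^{(n)}$-$E_0$". Consider
$$S\ \equiv\ \exists\eta\,\exists k\,\big(\eta\text{ limit}\wedge k:V_\eta\to V_\eta\text{ nontrivial elementary}\wedge crit(k)=\kappa\wedge\textstyle\bigwedge_{1\le i\le m}k^{i}(\kappa)=\kappa_i\big),$$
with parameters $\kappa,\kappa_1,\dots,\kappa_m$. Exactly as in the observation above (showing $\delta\notin C^{(2)}$ for minimal $\delta$), $S$ is $\Sigma_2$, and it is true in $V$, witnessed by $j$ itself with $\eta=\delta$. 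Since $n\ge2$ we have $\kappa_{m+1}\in C^{(n)}\subseteq C^{(2)}$, i.e. $V_{\kappa_{m+1}}\preceq_2 V$, and all parameters are ordinals $\le\kappa_m<\kappa_{m+1}$, hence lie in $V_{\kappa_{m+1}}$. So $S$ reflects down: there are $\eta<\kappa_{m+1}$ and a nontrivial elementary $k:V_\eta\to V_\eta$ with $crit(k)=\kappa$ and $k^{i}(\kappa)=\kappa_i$ for $1\le i\le m$. Because each $\kappa_i\in C^{(n)}$, this $k$ genuinely witnesses that $\kappa$ is $m$-$C^{(n)}$-$E_0$; moreover $\eta<\kappa_{m+1}$ puts $k\in V_{\kappa_{m+1}}$, and the clauses "$k:V_\eta\to V_\eta$ is elementary" and "$k^{i}(\kappa)\in C^{(n)}$" are absolute between $V_{\kappa_{m+1}}$ and $V$ (the latter since $\kappa_{m+1}\in C^{(n)}$ gives $(C^{(n)})^{V_{\kappa_{m+1}}}=C^{(n)}\cap\kappa_{m+1}$). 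Thus $V_{\kappa_{m+1}}\models$ "$\kappa$ is $m$-$C^{(n)}$-$E_0$".

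The decisive step pushes this below $\kappa$ by iterating the elementarity of $j$. Let
$$R(\mu)\ \equiv\ \exists\gamma<\mu\ \big(V_\mu\models\text{``}\gamma\text{ is }m\text{-}C^{(n)}\text{-}E_0\text{''}\big),$$
a formula of one free ordinal variable, interpreted inside $V_\delta$ (legitimate, since $V_\mu\in V_\delta$ for $\mu<\delta$ and satisfaction in a set model is definable). By the previous step and $\kappa<\kappa_{m+1}$ we have $V_\delta\models R(\kappa_{m+1})$. Now $\kappa_i=j(\kappa_{i-1})$ for every $i\ge1$, so elementarity of $j$ gives $V_\delta\models R(\kappa_i)\ \Leftrightarrow\ V_\delta\models R(\kappa_{i-1})$; applying this $m+1$ times yields $V_\delta\models R(\kappa_0)=R(\kappa)$. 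Reading this off (again using absoluteness of satisfaction for the set model $V_\kappa\in V_\delta$), there is $\gamma<\kappa$ with $V_\kappa\models$ "$\gamma$ is $m$-$C^{(n)}$-$E_0$". Since $\kappa\in C^{(n)}$, the $C^{(n)}$-clauses are computed correctly in $V_\kappa$, so $\gamma$ is genuinely $m$-$C^{(n)}$-$E_0$ in $V$ with $\gamma<\kappa$, contradicting the minimality of $\kappa$.

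The step I expect to require the most care is the first: keeping the existence-of-embedding statement $\Sigma_2$ by \emph{prescribing} the iterate values $\kappa_1,\dots,\kappa_m$ (rather than only demanding membership in $C^{(n)}$), so that it is reflectable using merely $\kappa_{m+1}\in C^{(2)}$, and then verifying that the reflected embedding really is a bona fide witness to $m$-$C^{(n)}$-$E_0$-ness. This is exactly where the hypothesis $n\ge2$ and the availability of the $(m+1)$-st iterate are both consumed; everything else is elementarity of $j$ and absoluteness of set-model satisfaction.
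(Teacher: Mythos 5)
Your proof is correct, and its engine is the same one the paper uses: the $\Sigma_2$ sentence that \emph{prescribes} the iterate values $j(\kappa),\dots,j^m(\kappa)$ (rather than asserting their $C^{(n)}$-membership, which would push the complexity up to $\Sigma_{n+1}$), reflected below the extra iterate via $j^{m+1}(\kappa)\in C^{(n)}\subseteq C^{(2)}$. The one place you diverge is in how the witness gets below $\kappa$. The paper builds the bound $crit(i)=\mu<j(\kappa)$ into the reflected sentence, so a \emph{single} application of the elementarity of $j$ --- pulling the parameter tuple $(j(\kappa),\dots,j^{m+1}(\kappa))$ back to $(\kappa,\dots,j^{m}(\kappa))$ --- already yields that $V_{j^m(\kappa)}$ sees an embedding with critical point $\mu<\kappa$ and prescribed iterates $\kappa,j(\kappa),\dots,j^{m-1}(\kappa)$; one upward transfer of this $\Sigma_2$ statement (using $j^m(\kappa)\in C^{(2)}$, so again only $n\geq 2$) then produces a genuine $m$-$C^{(n)}$-$E_0$ cardinal $\mu<\kappa$, the desired contradiction. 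You instead pin $crit(k)=\kappa$, package everything into the parameter-free satisfaction formula $R(\mu)$, and walk it down with $m+1$ applications of elementarity, finishing with an upward transfer of the $\Sigma_{n+1}$ statement ``$\gamma$ is $m$-$C^{(n)}$-$E_0$'' from $V_\kappa$, which uses $\kappa\in C^{(n)}$. Both routes are sound; the paper's one-shot pullback is more economical (one use of elementarity, and only $\Sigma_2$-correctness in the final step), while yours isolates a reusable intermediate fact --- that $V_{\kappa_{m+1}}$ correctly believes ``$\kappa$ is $m$-$C^{(n)}$-$E_0$'' --- at the price of the satisfaction-predicate bookkeeping and the iterated descent.
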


\begin{proof}
Suppose $\kappa$ is the least $m$-$C^{(n)}$-$E_0$ cardinal and suppose, aiming for a contradiction, that $j:V_\delta \to V_\delta$ is elementary, with $crit(j)=\kappa$ and $j^{m'}(\kappa)\in C^{(n)}$, for all $m'\leq m+1$.
Then, $V_{j^{m+1}(\kappa)}$ satisfies the sentence:
$$\exists i \, \exists \beta \, \exists \mu \, (i:V_\beta \to  V_\beta \mbox{ is elementary}\wedge crit(i)=\mu \wedge \mu <j (\kappa)  \, \wedge$$
$$ \bigwedge_{1\leq m' \leq m} i^{m'}(\mu)=j^{m'}(\kappa))$$
because $j$, $\delta$, and $\kappa$ witness it and the sentence is $\Sigma_2$ with parameters $j^{m'}(\kappa)$, all $1\leq m'\leq m$. Hence, by elementarity the following holds in $V_{j^m(\kappa)}$:$$\exists i \, \exists \beta \, \exists \mu \, (i:V_\beta \to  V_\beta \mbox{ is elementary}\wedge crit(i)=\mu \wedge \mu <\kappa  \, \wedge$$
$$ \bigwedge_{1\leq m' \leq m} i^{m'}(\mu)=j^{m'-1}(\kappa))$$
where $j^0(\kappa)=\kappa$. And since $j^m(\kappa)\in C^{(n)}$, it also holds  in $V$. 
But if  $\mu$ witnesses it, then $\mu$  is $m$-$C^{(n)}$-$E_0$, contradicting the minimality of $\kappa$.
\end{proof}

\medskip

It is not hard to see that $\kappa$ is $E_0$, witnessed by an elementary embedding $j:V_\delta \to V_\delta$, if and only if $\kappa$ is the critical point of an  embedding $k:V_{\delta +1}\to V_{\delta +1}$ which is $\Sigma_0$ elementary, i.e., it preserves truth for bounded formulas, with parameters. The main point is to notice that $j$ extends uniquely to a $\Sigma_0$ elementary embedding $k:V_{\delta +1}\to V_{\delta+1}$ by letting $k(A):=\bigcup_{\alpha <\delta}j(A\cap V_\alpha)$, for all $A\subseteq V_\delta$ (see \cite{L} or \cite{D} for details).

So it is only natural to consider the principles $E_i$, for $1\leq i\leq \omega$ (\cite{L}), which assert the existence of  a non-trivial $\Sigma_i$ elementary embedding $j:V_{\delta +1}\to V_{\delta +1}$, i.e., $j$ preserves the truth of $\Sigma_i$ formulas, with parameters. Thus, $E_\omega$ asserts that $j$ is fully elementary. 
$E_1$ and $E_\omega$ are also known in the literature as I2 and I1, respectively (see \cite{K}).

Observe that an embedding $j:V_{\delta +1}\to V_{\delta +1}$ is $\Sigma_i$ elementary if and only if its restriction to $V_\delta$ is $\Sigma^1_i$ elementary. (Recall that a formula is $\Sigma^1_i$ if it is a second order formula which begins with exactly $i$-many  alternating second-order quantifiers, beginning with an existential one, and the rest of the formula has only first-order quantifiers.) We shall later make use of the fact (folklore) that for each $i\geq 1$, the formula 
$$j:V_\delta \to V_\delta \mbox{ is } \Sigma^1_i\mbox{ elementary}$$
is $\Pi_{i+1}$ expressible in $V_{\delta +1}$, in the parameters $j$ and $\delta$, for it is equivalent to: 
For every 
$A\subseteq V_\delta$ and every $\Sigma^1_i$ formula $\exists X_1 \forall X_2\ldots \exists X_i  \psi (X_1,  \ldots ,X_i,Y)$, where $\psi$ has only first-order quantifiers, 
$$\exists X_1 \forall X_2 \ldots \exists X_i  (\langle V_\delta ,\in , X_1,\ldots ,X_i, A\rangle  \models \psi (X_1, \ldots ,X_i, A))$$
$$\mbox{if and only if}$$
$$\exists X_1 \forall X_2 \ldots \exists X_i (\langle V_\delta , \in , X_1,\ldots , X_i, A, j\rangle \models \psi (X_1, \ldots ,X_i, \bigcup_{\alpha <\delta} j(A\cap V_\alpha))).$$
%The extension of $j$ to $k:V_{\delta +1}\to V_{\delta +1}$ given by $k(A)=\bigcup_{\alpha <\delta} j(A\cap V_\alpha )$ is $\Sigma_i$ elementary.

Now, it was shown by Donald Martin that, for $i$ odd, if  $j:V_{\delta +1}\to V_{\delta +1}$ is $\Sigma_i$ elementary, then it is also $\Sigma_{i+1}$ elementary  (see \cite{L}). So one only needs to consider the principles $E_i$ when $i$ is even.

\medskip

Analogously as in the case of $E_0$, let us  call the critical point $\kappa$ 
of a $\Sigma_i$ elementary embedding $j:V_{\delta +1}\to V_{\delta +1}$ an \emph{$E_i$ cardinal}. If, in addition, $j(\kappa)\in C^{(n)}$, then we call $\kappa$  a \emph{ $C^{(n)}$-$E_i$ cardinal}. More generally, if $j^{m'}(\kappa)\in C^{(n)}$, for all $1\leq m'\leq m$, then we say that $\kappa$ is an \emph{$m$-$C^{(n)}$-$E_i$ cardinal}. And if $\delta \in C^{(n)}$, then we say that $\kappa$ is $\omega$-$C^{(n)}$-$E_i$. (Note that, by Proposition \ref{propI3}, $\delta \in C^{(n)}$ if and only if $j^m(\kappa)\in C^{(n)}$ for all $m$.) 

%
%Arguing similarly as in \cite{K} 24.4 one can show the following.

%\begin{proposition}
%If $j:V_{\delta +1}\to V_{\delta +1}$ witnesses that $\kappa$ is $m$-$C^{(n)}$-$E_{i+2}$, then there is a normal ultrafilter $\mathcal{U}$ over $\kappa$ such that the set of $\alpha <\kappa$ such that $\alpha$ is $m$-$C^{(n)}$-$E_i$, also witnessed by $\delta$, belongs to $\mathcal{U}$.
%\end{proposition}

For each  $i \leq  \omega$ and $m, n\geq 1$,  the  existence of an $m$-$C^{(n)}$-$E_i$ cardinal can be expressed as a $\Sigma_{n+1}$ statement, namely
$$\exists \delta \exists j\exists \kappa (j: V_{\delta+1}\to V_{\delta+1}\mbox{ is $\Sigma_i$ elementary}\wedge crit(j)=\kappa \; \wedge$$
$$\forall \;1\leq m'\leq m (j^{m'}(\kappa)\in C^{(n)})).$$
%$$V_{\beta +2}\models \forall R (\mbox{if }R\mbox{ is a well-founded relation on }V_\beta \mbox{ then so is }k^+(R) )).$$
(Notice that in the case $m=\omega$, $j^m (\kappa)=\delta$.) A  reflection argument similar to the one given prior to Proposition \ref{prop7.1} now yields that the least $m$-$C^{(n)}$-$E_i$ cardinal is smaller than the first cardinal in  $C^{(n+1)}$, for all $m, i\leq \omega$ and $n\geq 1$, and therefore smaller than the least $C^{(n+1)}$-$E_0$ cardinal.

Trivially, if  $\kappa$ is $m$-$C^{(n)}$-$E_{i+1}$, then it is $m$-$C^{(n)}$-$E_{i}$. In the case $i=0$ much more is true:  arguing similarly as in \cite{K}, 24.4 one can show that there is a normal ultrafilter $\mathcal{U}$ over $\kappa$ such that the set of cardinals $\alpha <\kappa$ that are  $m$-$C^{(n)}$-$E_0$ belongs to $\mathcal{U}$.  In the general case we have the following.

%For $j:V_{\delta +1}\to V_{\delta +1}$, with $crit(j)=\kappa$, let us say that $X\subseteq \delta$ is \emph{large} if $| X\cap j^m(\kappa)|=j^m(\kappa)$, for all $m<\omega$. Then we have the following.

\begin{theorem}
\label{lasttheo}
Suppose $j:V_{\delta +1}\to V_{\delta +1}$ witnesses that $\kappa$ is an $m$-$C^{(n)}$-$E_{i+2}$ cardinal, where $i,n<\omega$ and $m\leq \omega$. Then the set of $m$-$C^{(n)}$-$E_{i}$ cardinals is unbounded below $\kappa$.
\end{theorem}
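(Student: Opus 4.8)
The plan is to reflect the witnessing embedding $j$, using its $\Sigma_{i+2}$-elementarity on $V_{\delta+1}$ together with the hypothesis $j^{m'}(\kappa)\in C^{(n)}$ to push the critical point below $\kappa$ while preserving the iterate-conditions. Write $\lambda_p=j^p(\kappa)$, so $\kappa=\lambda_0<\lambda_1<\cdots$ with $\delta=\sup_p\lambda_p$. I first record the auxiliary facts needed. Since $j$ is $\Sigma_0$-elementary and $crit(j)=\kappa$, the map $j\restriction V_\kappa:V_\kappa\to V_{\lambda_1}$ is fully elementary and equal to the identity (satisfaction in a set is absolute), so $V_\kappa\preceq V_{\lambda_1}$; as $\lambda_1\in C^{(n)}$ (here $m\ge 1$) we get $V_\kappa\preceq V_{\lambda_1}\preceq_n V$, hence $\kappa\in C^{(n)}$. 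The same reasoning shows that the critical-sequence cuts $V_{\lambda_p}$ are elementary in $V_\delta$, and, more generally, that the critical-sequence cuts of any nontrivial elementary self-embedding of $V_\delta$ are elementary substructures of $V_\delta$. Finally, in the case $m=\omega$ one has $\delta\in C^{(n)}$ by Proposition \ref{propI3}.

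The heart of the argument is a $\Sigma^1_{i+2}$-reflection over $V_\delta$. By the folklore fact quoted above, ``$e:V_\delta\to V_\delta$ is $\Sigma^1_i$-elementary'' is $\Pi^1_{i+1}$ over $V_\delta$; hence, for an ordinal variable $x$ and a parameter $c$, the formula
$$\Phi(x,c):\quad \exists e\,\big(e:V_\delta\to V_\delta\text{ is nontrivial }\Sigma^1_i\text{-elementary}\wedge crit(e)=x\wedge \sup\nolimits_p e^p(x)=\delta\wedge e^m(x)<c\big)$$
is $\Sigma^1_{i+2}$ over $V_\delta$, the last three conjuncts being first-order and hence absorbed into the first-order matrix; this is precisely where the two extra levels of elementarity are both needed and sufficient. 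Note that $\Phi(\kappa,\lambda_{m+1})$ holds, witnessed by $e=j\restriction V_\delta$, since $e^m(\kappa)=\lambda_m<\lambda_{m+1}$. Fix $\alpha<\kappa$ and set $\Psi(a,b,c):\ \exists x\,(a<x<b\wedge\Phi(x,c))$, again $\Sigma^1_{i+2}$. Then $V_\delta\models\Psi(\alpha,\lambda_1,\lambda_{m+1})$, as witnessed by $x=\kappa$. Since $j\restriction V_\delta$ is $\Sigma^1_{i+2}$-elementary and maps $(\alpha,\kappa,\lambda_m)$ to $(\alpha,\lambda_1,\lambda_{m+1})$ (using $j(\alpha)=\alpha$, $j(\kappa)=\lambda_1$, $j(\lambda_m)=\lambda_{m+1}$), elementarity applied to the triple $(\alpha,\kappa,\lambda_m)$ yields $V_\delta\models\Psi(\alpha,\kappa,\lambda_m)$. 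Thus for every $\alpha<\kappa$ there is $\mu\in(\alpha,\kappa)$ and a nontrivial $\Sigma^1_i$-elementary $e:V_\delta\to V_\delta$ with $crit(e)=\mu$, critical sequence cofinal in $\delta$, and $e^m(\mu)<\lambda_m$; in particular $\mu$ is an $E_i$ cardinal.

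It remains to check that $\mu$ is $m$-$C^{(n)}$-$E_i$, i.e. that $e^{m'}(\mu)\in C^{(n)}$ for $1\le m'\le m$. For $m<\omega$ this is where the bound $e^m(\mu)<\lambda_m$ pays off: the iterates satisfy $e^{m'}(\mu)\le e^m(\mu)<\lambda_m$, the cut $V_{e^{m'}(\mu)}$ is elementary in $V_\delta$ (critical sequence of $e$) and so is $V_{\lambda_m}$ (critical sequence of $j$), whence $V_{e^{m'}(\mu)}\preceq V_{\lambda_m}\preceq_n V$ because $\lambda_m\in C^{(n)}$; therefore $e^{m'}(\mu)\in C^{(n)}$. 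For $m=\omega$ one runs the same reflection with the clause $e^m(x)<c$ dropped and only the two parameters $\kappa\mapsto\lambda_1$; the witnessing embedding then has rank $\delta$ with $\delta\in C^{(n)}$, so each $e^p(\mu)$ gives $V_{e^p(\mu)}\preceq V_\delta\preceq_n V$ and $\mu$ is $\omega$-$C^{(n)}$-$E_i$. Either way the $m$-$C^{(n)}$-$E_i$ cardinals are unbounded below $\kappa$.

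The main obstacle, and the point needing the most care, is securing \emph{genuine} (not merely $V_\delta$-relativized) $C^{(n)}$-membership of the iterates of the reflected embedding: since the critical sequence of the reflected $e$ climbs all the way to $\delta$, in the finite case $\delta$ itself need not be $\Sigma_n$-correct, so one cannot simply read off $C^{(n)}$ inside $V_\delta$. The device that resolves this is to carry the extra parameter $c$ with the first-order clause $e^m(x)<c$ and to reflect the configuration $(\alpha,\kappa,\lambda_m)$ from its $j$-image $(\alpha,\lambda_1,\lambda_{m+1})$, so that the reflected iterates are pushed below the genuinely $\Sigma_n$-correct ordinal $\lambda_m$; the comparison of elementary cuts then upgrades relativized correctness to true membership in $C^{(n)}$. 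Verifying that this clause is indeed first-order (hence within the $\Sigma^1_{i+2}$ budget preserved by $j\restriction V_\delta$), and that $\delta=\sup_p e^p(\mu)$ for the reflected embedding so that cut-elementarity applies, are the two details one must not skip.
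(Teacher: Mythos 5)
Your proof is correct, and its engine is the same as the paper's: both express the existence of a suitably constrained $\Sigma^1_i$-elementary embedding as a $\Sigma^1_{i+2}$ assertion over $V_\delta$ (equivalently, $\Sigma_{i+2}$ over $V_{\delta+1}$), invoking the folklore $\Pi^1_{i+1}$-expressibility of ``$e$ is $\Sigma^1_i$-elementary'', and then pull that assertion back through $j$ itself by writing the true parameters as $j$-images of the desired ones (your $(\alpha,\kappa,\lambda_m)\mapsto(\alpha,\lambda_1,\lambda_{m+1})$ plays the role of the paper's $\gamma$ and iterate sequence). Where you genuinely diverge is in how $C^{(n)}$-membership of the new embedding's iterates is secured. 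The paper's reflected formula demands \emph{exact equality}: $k^{m'}(\alpha)=j^{m'}(\kappa)$ before pullback, hence $k^{m'}(\alpha)=j^{m'-1}(\kappa)$ after, so the iterates of the reflected embedding (which lives on some $V_\beta$, $\beta\leq\delta$) are literally ordinals already known to be in $C^{(n)}$, the only extra ingredient being $\kappa\in C^{(n)}$. You instead keep the reflected embedding on $V_\delta$ itself, carry the cofinality clause and the first-order bound $e^m(x)<c$, and then upgrade $e^{m'}(\mu)<\lambda_m$ to $e^{m'}(\mu)\in C^{(n)}$ via the cut comparison $V_{e^{m'}(\mu)}\preceq V_{\lambda_m}\preceq_n V$, which requires the standard fact (stated in the paper for $E_0$ embeddings, and applicable here since your reflected $e$ is fully first-order elementary on $V_\delta$ with cofinal critical sequence) that critical-sequence cuts are elementary in $V_\delta$. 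The paper's device is leaner---no cut lemma and no auxiliary bound parameter are needed, and the $m=\omega$ case is absorbed into the same formula---while yours buys something slightly stronger: $\Sigma^1_i$-elementary self-embeddings of the \emph{same} $V_\delta$ with arbitrarily small critical points below $\kappa$ and controlled iterates, at the price of the extra lemma and the care (which you correctly exercise) in distinguishing genuine $\Sigma_n$-correctness from $V_\delta$-relativized correctness.
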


\begin{proof}
Fix $\gamma <\kappa$.  Then the following holds in $V_{\delta +1}$:
$$\exists k\, \exists \beta \, \exists \alpha\,  (k:V_\beta \to V_\beta \mbox{ is }\Sigma^1_i \mbox{ elementary}\wedge \gamma < crit(k)=\alpha <j(\kappa)\,  \wedge$$
$$\forall \, 1\leq m'\leq m \, (k^{m'}(\alpha )=j^{m'}(\kappa)))$$
because $j$, $\delta$, and $\kappa$ witness it.

As we observed above, the formula ``$k:V_\beta \to V_\beta \mbox{ is }\Sigma^1_i \mbox{ elementary}"$ is $\Pi_{i+1}$ expressible in $V_{\delta +1}$ in the variables $k$ and $\beta$. So the last displayed statement is $\Sigma_{i+2}$, with $\gamma$ and  $\langle j(\kappa), \,j^2(\kappa),\ldots , \, j^m(\kappa)\rangle$ as parameters. Thus, since $j$ is $\Sigma_{i+2}$ elementary, $V_{\delta +1}$ satisfies:
$$\exists k\, \exists \beta \, \exists \alpha\,  (k:V_\beta \to V_\beta \mbox{ is }\Sigma^1_i \mbox{ elementary}\wedge \gamma <crit(k)=\alpha <\kappa\,  \wedge$$
$$\forall \, 1\leq m'\leq m\, (k^{m'}(\alpha )=j^{m'-1}(\kappa)))$$
where $j^0(\kappa)=\kappa$. If $k$, $\beta$, and $\alpha$ witness the statement, then the embedding $k:V_\beta \to V_\beta$ witnesses that $\alpha$ is an $m$-$C^{(n)}$-$E_{i}$ cardinal greater than $\gamma$.
\end{proof}

Similarly as in Proposition \ref{mlevelI3} one can show that the least $m$-$C^{(n)}$-$E_i$ cardinal, where $i\leq \omega$,   is not $(m+1)$-$C^{(n)}$-$E_i$, for all  $m\geq 1$, and all $n\geq 2$.

\medskip

To summarize, for every $n$, let $\kappa^{(n)}$ denote the first cardinal in $C^{(n)}$, and for $i\leq  \omega$ and $2\leq m \leq \omega$, let $\kappa_i^{(n)}$ and  $m$-$\kappa_i^{(n)}$ denote the least $C^{(n)}$-$E_i$ cardinal and  the least $m$-$C^{(n)}$-$E_i$ cardinal,  respectively. Then, assuming all these cardinals exist, we have:
%$$\kappa^{(n)}<\kappa^{(n)}_i   <\kappa_{i+1}^{(n)}<\kappa_\omega^{(n)}<\kappa^{(n+1)}$$ and
$$\kappa^{(n)}<\kappa^{(n)}_i < m\mbox{-}\kappa^{(n)}_i < m\mbox{-}\kappa^{(n)}_{i+2} < m\mbox{-}\kappa^{(n)}_\omega < (m+1)\mbox{-}\kappa^{(n)}_\omega < \omega\mbox{-}\kappa^{(n)}_\omega < \kappa^{(n+1)},$$
for all $i$ in all cases; all $n$ in the case of the inequalities 1,3,4,and 6; all $n\geq 1$ in the case of the last inequality; all $n\geq 2$ in the case of inequalities 2 and 5; all $1\leq m \leq \omega$ in the case of inequalities 3,4, 5, 6, and 7; and all $2\leq m\leq \omega$ in the case of the second inequality.

The first inequality is clear. Inequalities 2 and 5 follow from an argument similar to the proof of Proposition \ref{mlevelI3}. Inequalities 3,4, and 6 follow from Theorem \ref{lasttheo}. And the last inequality can be shown by a reflection argument similar to that given just before Proposition \ref{prop7.1}.
%
%Let $Ii(\kappa ,\delta)^{(n)}$, for $i\in \{1,2,3\}$ denote that $\kappa$ is a $C^{(n)}$-Ii cardinal, witnessed by $\delta$.

%Let us say that $Ii(\kappa ,\delta)^{(n)}$ and 

%\begin{proposition}
%Suppose that $\kappa$ is the least $C^{(n)}$-I1 cardinal, witnessed by $j:V_{\delta +1}\to V_{\delta+1}$, and $\lambda$ is the least $C^{(n)+}$-I3 cardinal, witnessed by $k:V_\eta \to V_\eta$. Then either $\delta <\lambda$ or $\eta <\kappa$
%\end{proposition}

%\begin{proof}
%%If $\delta =\lambda$, then $V_\kappa \preceq V_\delta \preceq V_\eta$. Hence $V_\kappa$ satisfies that there exists a $C^{(n)}$-I1 cardinal, contradicting the minimality of $\kappa$.
%Since $\delta \not \in C^{(n)}$, $\delta \ne \lambda$.

%
%If $\eta =\kappa$, then $V_\lambda \preceq V_\eta =V_\kappa \preceq V_{j(\kappa)}$. But  $V_{j(\kappa)}$ satisfies
%$$ \exists i\exists \beta (i:V_\beta \to V_\beta\mbox{ is elementary}\wedge i(crit(i)), \beta \in C^{(n)})$$
%because $k$ and $\eta$ witness it.
%So, by elementarity, the same holds in $V_\lambda$, and therefore in $V$, contradicting the minimality of $\lambda$.

%So, aiming for a contradiction, suppose $\delta >\lambda$ and   $\eta >\kappa$. 
%\end{proof}

%
\vskip 0.5cm

{\small

\noindent
Joan Bagaria,
ICREA (Instituci\'o Catalana de Recerca i Estudis Avan\c{c}ats) and
Departament de L\`ogica, Hist\`oria i Filosofia de la Ci\`encia, Universitat de Barcelona. \\
Montalegre 6,
08001 Barcelona, Catalonia (Spain). \\
{\tt joan.bagaria@icrea.cat}\\
{\tt bagaria@ub.edu}\\
{\tt http://www.icrea.cat/Web/ScientificStaff/Joan-Bagaria-i-Pigrau-119}

}

\end{document}